\newcommand\cyr{%
\renewcommand\rmdefault{wncyr}%
\renewcommand\sfdefault{wncyss}%
\renewcommand\encodingdefault{OT2}%
\normalfont
\selectfont}
\DeclareTextFontCommand{\textcyr}{\cyr}
	\newcommand{\Z}{\mathbb{Z}}			
	\newcommand{\R}{\mathbb{R}}			
	\newcommand{\lob}{{\textcyr{L}}}		
	\theoremstyle{plain}
	\newtheorem{thm}{Theorem}[section]
	\newtheorem*{CRtheorem}{Casson-Rivin Theorem}
    \newtheorem{lem}[thm]{Lemma}
	\newtheorem{cor}[thm]{Corollary}
	\newtheorem{prop}[thm]{Proposition}
	\theoremstyle{definition}
	\newtheorem{defn}{Definition}[section]
	\newtheorem{observation}{Observation}
	\newtheorem{property}{Property}
\begin{document}

\title{Geometric triangulations of a family of hyperbolic 3-braids}
\author{Barbara Nimershiem}
\address{Department of Mathematics, Franklin \& Marshall College, Lancaster, PA  17604}
\email{\href{mailto:barbara.nimershiem@fandm.edu}{barbara.nimershiem@fandm.edu}}
\date{\today}                
\subjclass[2020]{57K32}

\begin{abstract}
We construct topological triangulations for complements of $(-2,3,n)$-pretzel knots and links with $n\ge7$.  Following a procedure outlined by Futer and Gu\'eritaud, we use a theorem of Casson and Rivin to prove the constructed triangulations are geometric.  Futer, Kalfagianni, and Purcell have shown (indirectly) that such braids are hyperbolic.  The new result here is a direct proof.  
\end{abstract}

\maketitle

\section{Introduction}\label{sec:intro}

A knot or link in $S^3$ is \emph{hyperbolic} if its complement admits a complete hyperbolic structure.  A braid on $n$ strands, which is represented by a word in the braid group, $w\in B_n$, is hyperbolic if its braid closure, $L_w$, is.  Hyperbolic braids formed from three strands have been characterized by Futer, Kalfagianni, and Purcell \cite{FareyManifolds}, a characterization that depends on their representation in the braid group.  $B_3$ has two generators, $\sigma_1$ and $\sigma_2$ shown in Figure~\ref{fig:B3gen}, and one relation $\sigma_1\sigma_2\sigma_1 = \sigma_2\sigma_1\sigma_2 $.  The square of this element is commonly denoted by $C$, which is central.  In \cite[Theorem~5.5]{FareyManifolds}, Futer, Kalfagianni, and Purcell have shown that $S^3- L_w$ is hyperbolic if and only if $w$ is conjugate to $C^k\sigma_1^{p_1}\sigma_2^{-q_1}\cdots\sigma_1^{p_s}\sigma_2^{-q_s}$  with $k\in\Z$ and $p_i$, $q_i$, and $s$ positive integers, and $w$ is not conjugate to $\sigma_1^{p_0}\sigma_2^{q_0}$ for some integers~$p_0$ and~$q_0$ (closures of such braids are either unknots, torus knots, or connected sums of torus knots).  The proof is by contradiction; they show the required hyperbolic structures exist without  constructing associated triangulations.  In this paper, we construct geometric triangulations for braids with $k=2$, $s=1$, $p_1\ge1$, and $q_1=1$ thus offering a direct proof of the \lq\lq if'' direction of their theorem in these cases.  
\begin{figure}[ht]\vskip.25in
\begin{overpic}[unit=.25in]{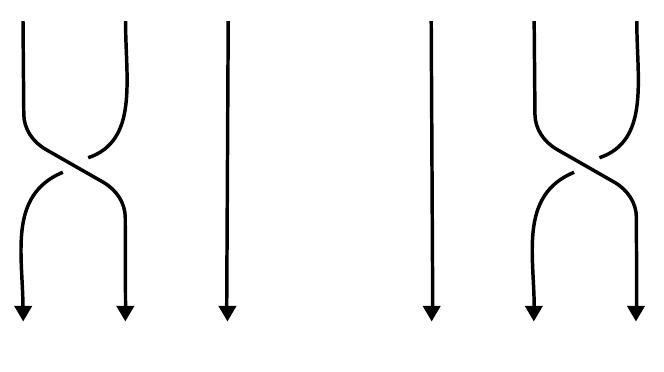}
\put(1.8,.3){\small{$\sigma_1$}}
\put(8.35,.3){\small{$\sigma_2$}}
\end{overpic}
\caption{The generators of $B_3$, the braid group on three strands.\label{fig:B3gen}}
\end{figure}

Let $L_p$ be the braid closure of $C^2\sigma_1^p\sigma_2^{-1}$ with $p\ge1$, which has one component if $p$ is odd and two if $p$ is even.  The complement of $L_p$ is homeomorphic to the complement of a $(-2,3,p+6)$-pretzel (Corollary~\ref{cor:pretzelstoo}), so proving $C^2\sigma_1^p\sigma_2^{-1}$ is hyperbolic will cover those pretzel knots and links as well.  To show $C^2\sigma_1^p\sigma_2^{-1}$ is hyperbolic, we will first decompose its complement into ideal tetrahedra whose faces are identified in pairs  (Section~\ref{sec:idealtriangulationsq=1}).  These topological triangulations (one for each $p$) are very much in the spirit of the canonical decompositions first described for 2-bridge knot and link complements by Sakuma and Weeks in~\cite{SakumaWeeks}. It turns out that, as in the 2-bridge case detailed by Futer in~\cite[Appendix]{GwithF2Bridge}, our triangulations are not just topological; they are also \emph{geometric} (though not canonical).  In other words, each ideal tetrahedron can be given a hyperbolic shape of positive volume and, when the faces are paired by hyperbolic isometries, the resulting hyperbolic structure on $S^3-L_p$ is metrically complete.

To prove that our topological triangulations are geometric, we will employ the \lq\lq Casson-Rivin program'' laid out by Futer and Gu\'eritaud in~\cite{AngledtoHyperbolic} --- their survey of Casson and Rivin's technique for finding the hyperbolic structure on a 3-manifold whose boundary consists of tori.  Gu\'eritaud has implemented this program for punctured torus bundles and 4-punctured sphere bundles~\cite{GwithF2Bridge} and Futer for 2-bridge knots~\cite[Appendix]{GwithF2Bridge}, whereas Gu\'eritaud and Schleimer have applied it to layered solid tori and Dehn fillings~\cite{DehnFillings} and Ham and Purcell used the same procedure to determine geometric triangulations on highly twisted links~\cite{HighlyTwisted}.

Geometric triangulations can lead to better understandings of the geometry of finite-volume hyperbolic 3-manifolds, including simpler proofs.  However, it is not known whether every such 3-manifold admits a geometric triangulation.  Currently the list of infinite families of finite-volume hyperbolic 3-manifolds that have been shown to admit geometric triangulations is short: punctured torus bundles and 4-punctured sphere bundles~\cite{GwithF2Bridge}, 2-bridge knots and links~\cite[Appendix]{GwithF2Bridge}, and certain Dehn fillings of fully-augmented 2-bridge knots and links~\cite{HighlyTwisted}. The main result of this paper is that the $(-2,3,n)$-pretzel knots and links with $n\ge 7$ can be added to the list. 

\subsection*{Organization} 
After constructing the topological triangulations in Section~\ref{sec:idealtriangulationsq=1}, we offer definitions and a basic outline of the Casson-Rivin program in Section~\ref{sec:anglestructures}. 
The first step of the program is carried out in Section~\ref{sec:anglestructuresnonempty}, where we show that the space of positive angle structures for our triangulations (Definition~\ref{def:anglestructure}) is nonempty. The final step --- arguing that the critical point of the volume functional (Definition~\ref{def:volume}) is a positive angle structure --- is presented in Section~\ref{sec:proofq=1} with some comments about extending the construction in Section~\ref{sec:extendingconstruction}.

\subsection*{Acknowledgments} 
The explicit triangulations appearing here, as well as several more in the infinite family, were confirmed using the software packages Regina~\cite{Regina} and SnapPy \cite{SnapPy} and checked against the census of veering triangulations created by Giannopolous, Schleimer, and Segerman \cite{VeeringCensus}.  See the Regina data file at \href{https://tinyurl.com/ReginaFileC2sigma1psigma2inv}{https://tinyurl.com/ReginaFileC2sigma1psigma2inv} for the triangulations constructed in Section~\ref{sec:idealtriangulationsq=1}.

To make the graphics more accessible to those with color vision deficiencies, I have used two of Paul Tol's qualitative color schemes --- \emph{bright} (Figures~\ref{fig:fulltwist} and~\ref{fig:clasp}) and \emph{light} (Figures~\ref{fig:p=1} and~\ref{fig:firstlayer}).  His palettes are mathematically designed to appear distinct to all \cite{PaulTol}.

David Futer kindly gave feedback on an early draft of Appendix~\ref{app:seeing2-bridge} for which I am grateful.  
I especially want to thank Bill Dunbar for many helpful conversations.  There is no aspect of this paper --- the structure, the notation, the figures, the tables, the exposition, the results --- that has not been improved by his insights, careful reading, and self-described picky-ness.  I am also grateful for several useful suggestions offered by the referee.  That said, any remaining errors are mine.

\section{Ideal triangulations}  \label{sec:idealtriangulationsq=1}

Let $X_{p}=S^3-L_p$, the complement of the closure of the braid $C^2\sigma_1^p\sigma_2^{-1}$.  In this section, we construct a triangulation of $X_{p}$.  In other words, we decompose the space into tetrahedra with face pairings. The decomposition will be developed using ideas from Appendix~\ref{app:seeing2-bridge}, which contains a slight re-visualization of the geometric triangulations of 2-bridge link complements presented by Futer in~\cite[Appendix]{GwithF2Bridge}.  There he uses the fact that a 2-bridge link can be described as a 4-braid \lq\lq closed up'' with a clasp at each end.  The 4-braid lives in a product region, $S^2\times I$, and its complement in this region is also a product, $S\times I$, where $S$ is a 4-punctured sphere. In~\cite[Appendix]{GwithF2Bridge}, the product region, $S\times I$, appears between two nested pillowcases and Futer showed that the region can be triangulated using a sequence of layers of ideal tetrahedra.  In Appendix~\ref{app:seeing2-bridge}, we position the product region vertically.  As a result, the faces of Futer's layered tetrahedra can be easily seen in the braid complement, and the bottom of one layer is identified to the top of the next according to the half-twist between them.  (See, for example, Figures~\ref{fig:2-bridgelink} and~\ref{fig:firstlayer}.)  The full link complement can  be obtained by identifying the top of the product region to itself in a way that forms the  clasp needed at the top and similarly for the bottom (Figure~\ref{fig:clasp}).  These identifications amount to capping off each end of the $S\times I$ with a 3-ball from which the clasp has been removed.

Analogously, we observe that the closure of $C^2\sigma_1^p\sigma_2^{-1}$ can  be formed from a 6-braid \lq\lq closed up'' with a three-stranded full twist on both the top and the bottom.  In this context, the 6-braid $\sigma_1^p\sigma_2^{-1}$ can be replaced by $\sigma_1^p\sigma_4^{-1}$ because $\sigma_2^{-1}$ can slide past the central full twist $C$ on the bottom to become $\sigma_4^{-1}$ as indicated in Figure~\ref{fig:6braid}.  In this section, we will triangulate the complement, $X_{p}$, by placing ideal tetrahedra in the product region containing the 6-braid $\sigma_1^p\sigma_4^{-1}$ and identifying the topmost layer to itself in a way that forms a full twist on three strands and similarly at the bottom.  We begin with the challenge of how to identify a 6-punctured sphere to itself to form a full twist.

\begin{figure}[ht]\vskip.25in
\begin{overpic}[unit=.25in]{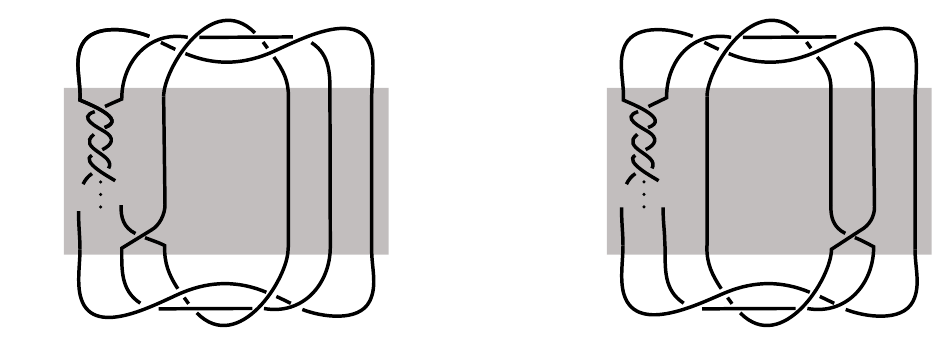}
\put(-.4,2.9){\small{$\sigma_1^p\sigma_2^{-1}$}}
\put(7.2,2.8){\small{$\approx$}}
\put(8.3,2.9){\small{$\sigma_1^p\sigma_4^{-1}$}}
\end{overpic}
\caption{The closure of $C^2\sigma_1^p\sigma_2^{-1}$ is the 6-braid $\sigma_1^p\sigma_2^{-1}$ --- or equivalently $\sigma_1^p\sigma_4^{-1}$ --- \lq\lq closed up'' with a three-stranded full twist on both the top and the bottom.\label{fig:6braid}}
\end{figure}

\subsection{Forming a three-stranded full twist}\label{sec:fulltwist}  

Let $S_0$ be the top 6-punctured sphere.  Label the punctures 0 through 5 and divide $S_0$ into eight triangles as shown in Figure~\ref{fig:fulltwist}(a).  Think of the punctures --- and the strands descending from them --- as living in the plane of the page, so $\triangle 025$ is in front of the page and $\triangle 035$ is behind the page.  Their shared edge, 05, passes through the point at infinity.  The colors indicate the identifications on $S_0$ that will yield a full twist:
\begin{description}
\item[gray] $\triangle 025$ is identified to $\triangle 035$ 
\item[white] $\triangle 023$ is identified to $\triangle 523$ 
\item[yellow]$\triangle 012$ in the front is identified to $\triangle 543$ in the front
\item[pink]$\triangle 012$ in the back is identified to $\triangle 543$ in the back
\end{description}
\begin{figure}[ht]\vskip.25in
\begin{overpic}[unit=.25in]{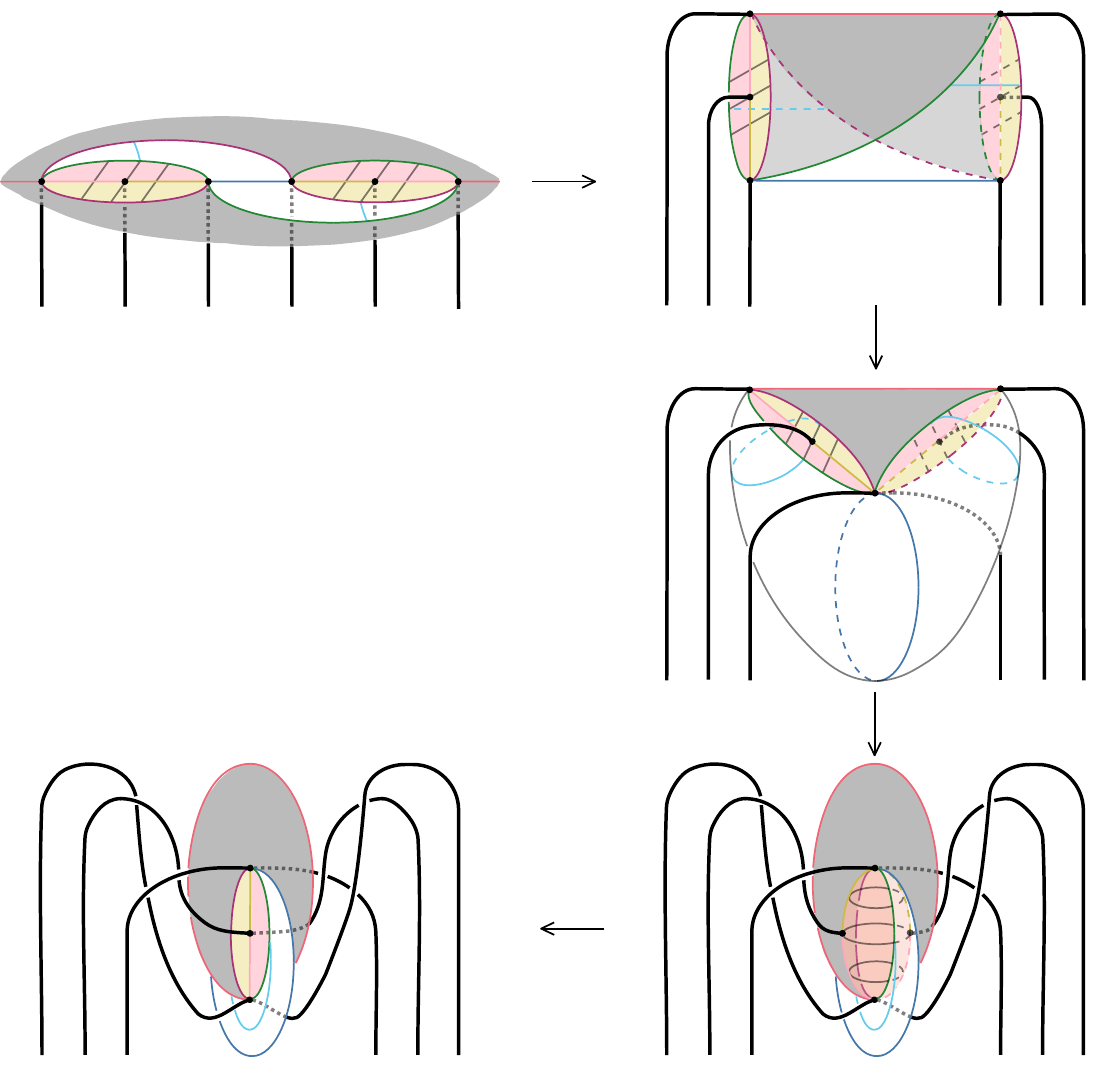}
\put(0.57, 14.5){\tiny0}\put(1.9, 14.5){\tiny1}\put(3.23, 14.5){\tiny2}\put(4.56, 14.5){\tiny3}\put(5.89, 14.5){\tiny4}\put(7.22, 14.5){\tiny5}

\put(10.57, 12){\tiny0}\put(11.24, 12){\tiny1}\put(11.91, 12){\tiny2}\put(15.88, 12){\tiny3}\put(16.55, 12){\tiny4}\put(17.22, 12){\tiny5}

\put(10.57, 6){\tiny0}\put(11.24, 6){\tiny1}\put(11.91, 6){\tiny2}\put(15.88, 6){\tiny3}\put(16.55, 6){\tiny4}\put(17.22, 6){\tiny5}

\put(10.57, 0){\tiny0}\put(11.24, 0){\tiny1}\put(11.91, 0){\tiny2}\put(15.88, 0){\tiny3}\put(16.55, 0){\tiny4}\put(17.22, 0){\tiny5}

\put(0.57, 0){\tiny0}\put(1.24, 0){\tiny1}\put(1.91, 0){\tiny2}\put(5.88, 0){\tiny3}\put(6.55, 0){\tiny4}\put(7.22, 0){\tiny5}

\put(0,15){\small(a)}
\put(9.9,16.5){\small(b)}
\put(9.9,10.5){\small(c)}
\put(9.9,4.5){\small(d)}
\put(0,4.5){\small(e)}

\end{overpic}
\caption{Forming a full twist $C$.\label{fig:fulltwist}}
\end{figure}
To see that identifying $S_0$ to itself in this way is the same as attaching a 3-ball with a full twist removed, first fold $S_0$ along the 23 edge into a cylindrical pillow shown in Figure~\ref{fig:fulltwist}(b).  As we go step-by-step through the identifications, we will follow the punctures (their paths are drawn as strands) and see they sweep out a full twist.  The extra markings --- cyan on the white triangles and gray on the yellow and pink ones --- will help us follow the triangles through each step.  
\begin{itemize}
\item $\triangle 025$ is identified to $\triangle 035$:  Figure~\ref{fig:fulltwist}(c) shows the result of bringing the 2 and 3 strands together to identify the gray triangles.  Placing this triangle in the plane of the page means the 23 (blue) edge forms a belt around a new pillow with half in front, as the 1 and 2 strands are, and half behind as the 3 and 4 strands are.  The left side of the pillow consists of $\triangle 023$ and both $\triangle 012$'s (appearing in the front).  The right side is formed by $\triangle 523$ and both $\triangle 543$'s (in the back).  This diagram is analogous to the third step in Figure~\ref{fig:clasp} in the appendix.
\item $\triangle 023$ is identified to $\triangle 523$:  By pushing the 0 and 5 strands into the pillow until they meet, $\triangle 023$ can be identified to $\triangle 523$ as shown in Figure~\ref{fig:fulltwist}(d).  The 0 strand goes behind 1 and 2 and the 5 strand passes in front of 3 and 4.  The result is not the same as the final step in Figure~\ref{fig:clasp}, because of the presence of the two $\triangle 012$'s and the two $\triangle 543$'s.  While these triangles themselves are not identified yet, their 02 edges and 53 edges (dark red and green) have come together, and they form a sphere with the two pink triangles in front and the two yellow in back.  The interlocked gray and white faces meet along the dark red and green longitudes.  These faces appear behind the sphere on the left and in front on the right.
\item The $\triangle 012$'s are identified to the $\triangle 543$'s:  Bringing strands 1 and 4 together and identifying each $\triangle 012$ to its corresponding $\triangle 543$ collapses the sphere.  The result, shown in Figure~\ref{fig:fulltwist}(e), is the desired positive full twist on three strands (read right to left, the positive direction for the top of the 3-braid with a counterclockwise orientation).  
\end{itemize}

To obtain the triangles for the 6-punctured sphere at the bottom of the product region, rotate $S_0$ about the horizontal line through the punctures, so now the strands of the braid go up, and use the same identifications.  For example, at the bottom $\triangle 023$ is in the front and is identified to $\triangle 523$ in the back.

Having capped off the ends, we turn our attention to placing ideal tetrahedra in the product region, determining their face pairings, and thus defining triangulations~$\tau_{p}$ on the manifolds $X_{p}$ for all~$p$.  We start by constructing a triangulation of $X_{1}$ upon which the other triangulations will be built. 

\subsection{A foundational ideal triangulation}   \label{sec:p=1}

To specify the triangulation of $X_{1}$, we will place tetrahedra in a product region from which the 6-braid, $\sigma_1\sigma_4^{-1}$, has been removed.   The left diagram in Figure~\ref{fig:p=1} shows this region with the braid in the plane of the page, except near the crossings.  

The method of Appendix~\ref{app:seeing2-bridge} suggests using three layers of tetrahedra (one on either side of the two crossings).  Label the top, middle, and bottom layers $\Delta_t$, $\Delta_m$, and $\Delta_b$ (with $\Delta_t$ split into $\Delta_{tt}$ and $\Delta_{tb}$).  The six ideal tetrahedra that will triangulate the complement are shown in Figure~\ref{fig:p=1}:  $t_1^\prime$~and~$t_2^\prime$ in $\Delta_t$, $m_1$ and $m_2$ in $\Delta_m$, and $b_1$ and $b_2^\prime$ in $\Delta_b$, where a prime indicates that the tetrahedron is behind the plane of the page; the labels on the ideal vertices come from the labels on their punctures (0 through 5 when read left to right); and the \lq\lq top'' faces of a tetrahedron are above the \lq\lq bottom'' faces relative to the product region.  

The edges of the tetrahedra are drawn both in the braid complement on the left and in flattened versions in the center and on the right.  On the left,  layers are expanded by making two copies of the edges where the top of a tetrahedron meets the bottom, allowing the faces of the tetrahedra and their remaining edges to be seen more easily.  The flattened versions show just one copy of each edge. 

The tetrahedra $t_1^\prime$ and $t_2^\prime$ share the face 025, shaded orange on the left, and their flattened versions are drawn separately.  The pair $m_1$ and $m_2$ share only an edge (thickened in the center diagram), so their interiors do not overlap when flattened, and they can both be seen when drawn side-by-side.  The same is true for~$b_1$ and~$b_2^\prime$.

The left figure contains additional segments that divide the boundaries of each layer into triangles.  These curves appear gray in the diagram and will be useful in determining the face pairings for this initial triangulation, which we shall refer to as~$\hat{\tau}_1$.
\begin{figure}[ht]\vskip.25in
\begin{overpic}[unit=.25in]{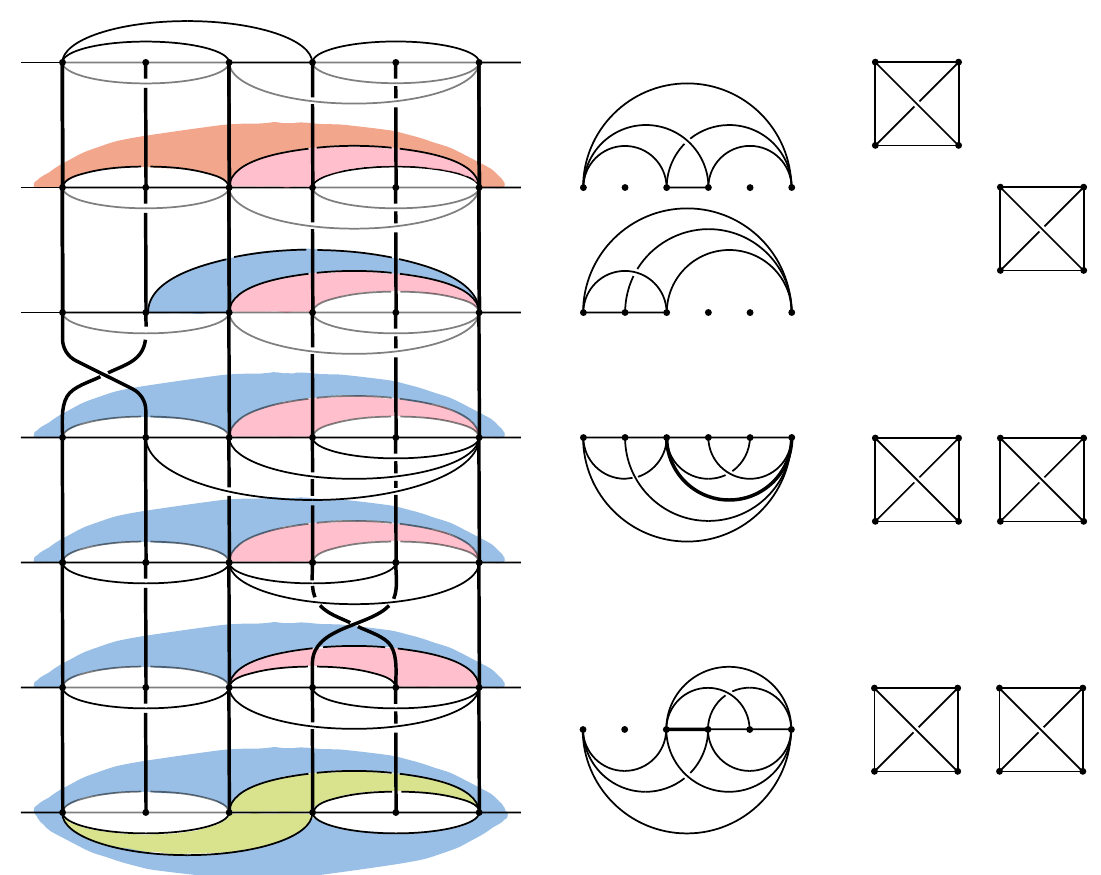}\put(0.7, 1.1){\tiny0}\put(2.03, 1.1){\tiny1}\put(3.36, 1.1){\tiny2}\put(4.69, 1.1){\tiny3}\put(6.02, 1.1){\tiny4}\put(7.35, 1.1){\tiny5}
\put(0.7, 3.1){\tiny0}\put(2.03, 3.1){\tiny1}\put(3.36, 3.1){\tiny2}\put(4.69, 3.1){\tiny3}\put(6.02, 3.1){\tiny4}\put(7.35, 3.1){\tiny5}
\put(0.7, 5.1){\tiny0}\put(2.03, 5.1){\tiny1}\put(3.36, 5.1){\tiny2}\put(4.69, 5.1){\tiny3}\put(6.02, 5.1){\tiny4}\put(7.35, 5.1){\tiny5}
\put(0.7, 7.1){\tiny0}\put(2.03, 7.1){\tiny1}\put(3.36, 7.1){\tiny2}\put(4.69, 7.1){\tiny3}\put(6.02, 7.1){\tiny4}\put(7.35, 7.1){\tiny5}
\put(0.7, 9.1){\tiny0}\put(2.03, 9.1){\tiny1}\put(3.36, 9.1){\tiny2}\put(4.69, 9.1){\tiny3}\put(6.02, 9.1){\tiny4}\put(7.35, 9.1){\tiny5}
\put(0.7, 11.1){\tiny0}\put(2.03, 11.1){\tiny1}\put(3.36, 11.1){\tiny2}\put(4.69, 11.1){\tiny3}\put(6.02, 11.1){\tiny4}\put(7.35, 11.1){\tiny5}
\put(0.7, 13.1){\tiny0}\put(2.03, 13.1){\tiny1}\put(3.36, 13.1){\tiny2}\put(4.69, 13.1){\tiny3}\put(6.02, 13.1){\tiny4}\put(7.35, 13.1){\tiny5}

\put(7.8,1.9){\small$\Delta_b=$}\put(7.8,5.9){\small$\Delta_m=$}\put(7.8,9.9){\small$\Delta_{tb}=$}\put(7.8,11.9){\small$\Delta_{tt}=$}

\put(13.1,1.9){\small$=$}\put(13.1,5.9){\small$=$}\put(13.1,9.9){\small$=$}\put(13.1,11.9){\small$=$}

\put(14.4,11.1){\small$t_1^\prime$}
\put(16.4,9.1){\small$t_2^\prime$}
\put(14.4,5.1){\small$m_1$}\put(16.4,5.1){\small$m_2$}
\put(14.4,1.1){\small$b_1$}\put(16.4,1.1){\small$b_2^\prime$}

\put(9.1, 11.1){\tiny0}\put(9.766, 11.1){\tiny1}\put(10.432, 11.1){\tiny2}\put(11.098,11.1){\tiny3}\put(11.764, 11.1){\tiny4}\put(12.43, 11.1){\tiny5}
\put(9.1, 9.1){\tiny0}\put(9.766, 9.1){\tiny1}\put(10.432, 9.1){\tiny2}\put(11.098,9.1){\tiny3}\put(11.764, 9.1){\tiny4}\put(12.43, 9.1){\tiny5}
\put(9.1, 7.1){\tiny0}\put(9.766, 7.1){\tiny1}\put(10.432, 7.1){\tiny2}\put(11.098, 7.1){\tiny3}\put(11.764, 7.1){\tiny4}\put(12.43, 7.1){\tiny5}
\put(9.1, 2.43){\tiny0}\put(9.766, 2.43){\tiny1}\put(10.432, 2.43){\tiny2}\put(11.098, 2.43){\tiny3}\put(11.764, 2.43){\tiny4}\put(12.43, 2.43){\tiny5}

\put(13.75, 13.1){\tiny0}\put(15.35, 13.1){\tiny5}
\put(13.75, 11.4){\tiny2}\put(15.35, 11.4){\tiny3}

\put(15.75, 11.1){\tiny0}\put(17.35, 11.1){\tiny5}
\put(15.75, 9.4){\tiny1}\put(17.35, 9.4){\tiny2}

\put(13.75, 7.1){\tiny1}\put(15.35, 7.1){\tiny2}\put(15.75, 7.1){\tiny3}\put(17.35, 7.1){\tiny4}
\put(13.75, 5.4){\tiny0}\put(15.35, 5.4){\tiny5}\put(15.75, 5.4){\tiny2}\put(17.35, 5.4){\tiny5}

\put(13.75, 3.1){\tiny2}\put(15.35, 3.1){\tiny3}\put(15.75, 3.1){\tiny2}\put(17.35, 3.1){\tiny5}
\put(13.75, 1.4){\tiny0}\put(15.35, 1.4){\tiny5}\put(15.75, 1.4){\tiny3}\put(17.35, 1.4){\tiny4}

\end{overpic}
\caption{A triangulation of the complement of the closure of $C^2\sigma_1\sigma_2^{-1}$.\label{fig:p=1}}
\end{figure}

When comparing to Appendix~\ref{app:seeing2-bridge}, the reader should be concerned that the pairs of tetrahedra do not fill the entire layer.  In $\Delta_m$, for example, the tetrahedra are only in front of the plane of the page.  Nonetheless, if we slide the faces of these six tetrahedra around the link --- up and down the braid and through the full twists at the top and bottom --- they \emph{will} fill out the entire product region.  For example, face 235 on the bottom of $t_1^\prime$ can slide down the braid through $\Delta_{tb}$ and~$\Delta_m$ and past the $\sigma_4^{-1}$ half-twist to be identified with the 245 face of $b_2^\prime$.  We denote such faces and face pairings by:  
$$t_1^\prime(235)\sim b_2^\prime(245),$$
taking care that the order of the vertices indicates the correct match.  Images of the intersection of this triangle's path with each layer's boundary are shaded pink in the diagram.  They make use of the afore-mentioned additional gray segments and show that the back 235 region of $\Delta_m$ is indeed covered.  

As indicated in Figure~\ref{fig:p=1}, $t_1^\prime$ sits on top of $t_2^\prime$ with their  025 faces identified.  The remaining faces on the top of the tetrahedra in $\Delta_t$ and those on the bottom of the tetrahedra in $\Delta_b$ are triangles appearing in the previous section (\ref{sec:fulltwist}) and are identified by the full-twist identifications described there.  For example,  the face $b_1(023)$ on the bottom of $\Delta_b$ is the triangle $\triangle 023$, which is identified to $\triangle 523$, which is the 523 face of~$b_2^\prime$, another face on the bottom of~$\Delta_b$, so $b_1(023)\sim b_2^\prime(523)$.  These faces are shaded green.

Sometimes the full-twist identifications will combine with sliding along the braid.  To see such a combination, look at triangles shaded blue, starting with the 125 face of~$t_2^\prime$.  After traveling through the half-twist that interchanges 0 and 1, $t_2^\prime(125)$ becomes $\triangle 025$ on the top of $\Delta_m$ still in the back.  Then it can travel through $\Delta_m$ and $\Delta_b$ all the way to $\triangle 025$ on the bottom of $\Delta_b$ in the back.  This triangle is identified to $\triangle 035$ in the front, i.e., the 035 face of~$b_1$, so $t_2^\prime(125)\sim b_1(035)$.

The remaining face pairings of~$\hat{\tau}_1$ are obtained by sliding the remaining faces along the braid, twisting through half-twists and using full-twist identifications at the top and bottom as needed.  The reader may enjoy tracing the long and twisted journey of $t_2^\prime(012)$ to its mate, $m_1(021)$.  All twelve face pairings of~$\hat{\tau}_1$ are listed in Table~\ref{table:p=1prelim}.  

\begin{table}[ht]
\caption{Face pairings for~$\hat{\tau}_1$, an initial triangulation of $X_{1}$.\label{table:p=1prelim}\\}
\begin{tabular}{ |l c l|  }
 \hline
 $t_1^\prime(023)$    &$\sim$	&$m_2(523)$\\
  \hline
 $t_1^\prime(025)$    &$\sim$	&$t_2^\prime(025)$\\
  \hline
 $t_1^\prime(035)$    &$\sim$	&$m_1(125)$\\
  \hline
 $t_1^\prime(235)$    &$\sim$	&$b_2^\prime(245)$\\
  \hline
 $t_2^\prime(015)$    &$\sim$	&$m_1(105)$\\
  \hline
 $t_2^\prime(012)$    &$\sim$	&$m_1(021)$\\
  \hline
 $t_2^\prime(125)$    &$\sim$	&$b_1(035)$\\
  \hline
 $m_1(025)$    &$\sim$	&$b_1(025)$\\
  \hline
 $m_2(345)$    &$\sim$	&$b_2^\prime(354)$\\
  \hline
 $m_2(234)$    &$\sim$	&$b_2^\prime(243)$\\
  \hline
 $m_2(245)$    &$\sim$	&$b_1(235)$\\
  \hline
 $b_1(023)$    &$\sim$	&$b_2^\prime(523)$\\
 \hline
\end{tabular}
\end{table}

For reasons explained in~\ref{sec:veering}, the triangulation~$\hat{\tau}_1$ is not quite what we want for our triangulation of~$X_{1}$.  Instead, we  will simplify~$\hat{\tau}_1$ by applying Pachner moves (also known as bistellar flips, introduced in~\cite{Pachner}).  Table~\ref{table:p=1prelim} shows the following identifications of the 24 edge of $m_2$:
$$
24 \text{ in } m_2\sim 23 \text{ in } b_1\sim 23 \text{ in } b_2^\prime \sim_{\text{back to}}24 \text{ in } m_2.
$$
Whenever three tetrahedra surround an edge, we can perform a 3-2 Pachner move that replaces them with two tetrahedra sharing a face.  Figure~\ref{fig:3-2move} shows how to replace $m_2$, $b_1$, and $b_2^\prime$ with $\bar s$ on the top and $s$ on the bottom.  Label the vertices of $s$ with $abcd$ (circled in the diagram), where 
$a$~represents 3 in $m_2$ and 4 in $b_2^\prime$; 
$b$~represents 3 in $b_1$ and $b_2^\prime$ and 4 in $m_2$; 
$c$~represents 5 in $m_2$ and $b_1$; and 
$d$~represents 0 in $b_1$ and 5 in $b_2^\prime$.
The surviving faces of $m_2$, $b_1$, and $b_2^\prime$ are renamed as follows:
$m_2(235) \mapsto \bar s(2ac)$,
$m_2(345) \mapsto s(abc)$,
$b_1(025) \mapsto \bar s(d2c)$,
$b_1(035)  \mapsto s(dbc)$,
$b_2^\prime(245) \mapsto \bar s(2ad)$, and
$b_2^\prime(345) \mapsto s(bad)$.  
Making these substitutions and adding $\bar s(acd)\sim s(acd)$ yields the face pairings listed in Figure~\ref{fig:3-2move}.

\begin{figure}[ht]\vskip.25in
\begin{subfigure}{.4\textwidth}
\begin{overpic}[unit=.25in]{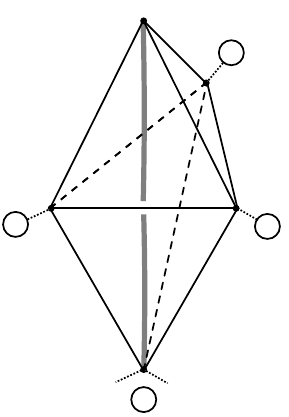}
\put(1,1.7){\small$s$}\put(.9,5){\small$\bar s$}
\put(1.8,3.2){\small$m_2$}
\put(3.2,4.2){\begin{turn}{60}\small$b_2^\prime$\end{turn}}
\put(1.6,4){\begin{turn}{40}\reflectbox{\small$b_1$}\end{turn}}

\put(4.2,3){\tiny$a$}\put(2.2,.15){\tiny$b$}\put(.15,3){\tiny$c$}\put(3.6,5.73){\tiny$d$}

\put(2.2,6.45){\tiny2}

\put(2,.6){\begin{turn}{-55}\tiny4\end{turn}}
\put(2.3,.4){\begin{turn}{60}\tiny4\end{turn}}
\put(1.89,.85){\begin{turn}{-55}\tiny3\end{turn}}
\put(2.43,.65){\begin{turn}{60}\tiny3\end{turn}}

\put(3.9,3.25){\begin{turn}{60}\tiny4\end{turn}}
\put(3.8,2.95){\begin{turn}{60}\tiny3\end{turn}}

\put(.4,3.4){\begin{turn}{-55}\tiny5\end{turn}}
\put(.53,3.1){\begin{turn}{-55}\tiny5\end{turn}}

\put(3.2,5.6){\begin{turn}{-215}\tiny0\end{turn}}
\put(3.4,5.45){\begin{turn}{-215}\tiny5\end{turn}}

\end{overpic}
\end{subfigure}
\begin{subfigure}{.4\textwidth}{\begin{tabular}{ |l c l|  }
 \hline
 $t_1^\prime(023)$    &$\sim$	&$\bar s(c2a)$\\
  \hline
 $t_1^\prime(025)$    &$\sim$	&$t_2^\prime(025)$\\
  \hline
 $t_1^\prime(035)$    &$\sim$	&$m_1(125)$\\
  \hline
 $t_1^\prime(235)$    &$\sim$	&$\bar s(2ad)$\\
  \hline
 $t_2^\prime(015)$    &$\sim$	&$m_1(105)$\\
  \hline
 $t_2^\prime(012)$    &$\sim$	&$m_1(021)$\\
  \hline
 $t_2^\prime(125)$    &$\sim$	&$s(dbc)$\\
  \hline
 $m_1(025)$    &$\sim$	&$\bar s(d2c)$\\
  \hline
 $s(abc)$    &$\sim$	&$s(bda)$\\
  \hline
 $\bar s(acd)$    &$\sim$	&$s(acd)$\\
  \hline
 \end{tabular}
}
\end{subfigure}\caption{A 3-2 move eliminates the 24 edge of $m_2$, which is also the 23 edge in $b_1$ and $b_2^\prime$, and yields the indicated face pairings.}\label{fig:3-2move} 
\end{figure}

Next, observe that the 23 edge of $t_1^\prime$ is identified to the $2a$ edge of $\bar s$, which is taken back to the 23 edge of $t_1^\prime$.  This very short cycle of edges means $t_1^\prime$ and $\bar s$ are identified across an edge, so we can perform a 2-0 Pachner move to collapse~$t_1^\prime$ and~$\bar s$.  This move identifies  $t_1^\prime(025)$ with $\bar s(c2d)$ and $t_1^\prime(035)$ with $\bar s(cad)$.  See Figure~\ref{fig:2-0move}, which shows how~$t_1^\prime$ and~$\bar s$ are stacked before they are collapsed and eliminated.  The 2-0 move results in a triangulation of $X_{1}$ comprising three tetrahedra --- $t_2^\prime,$ $m_1,$ and $s$ --- whose face pairings are in Figure~\ref{fig:2-0move}.  Define~$\tau_{1}$ to be this triangulation.   

\begin{figure}[ht]\vskip.25in
\begin{subfigure}{.4\textwidth}
\begin{overpic}[unit=.25in]{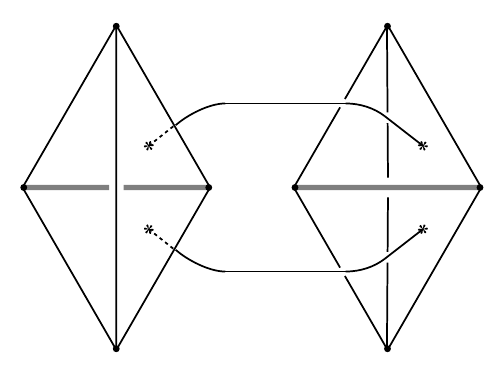}
\put(.6,1.2){\small$t_1^\prime$}\put(5,1.2){\small$\bar s$}

\put(1.75,5.7){\tiny5}\put(6.15,5.7){\tiny$d$}
\put(0,2.9){\tiny2}\put(3.4,2.9){\tiny3}\put(4.4,2.9){\tiny2}\put(7.8,2.9){\tiny$a$}
\put(1.75,0.1){\tiny0}\put(6.15,0.1){\tiny$c$}

\end{overpic}
\end{subfigure}
\quad
\begin{subfigure}{.4\textwidth}{\begin{tabular}{ |l c l|  }
 \hline
 $t_2^\prime(015)$    &$\sim$	&$m_1(105)$\\
  \hline
 $t_2^\prime(012)$    &$\sim$	&$m_1(021)$\\
  \hline
 $t_2^\prime(025)$    &$\sim$	&$m_1(520)$\\
 \hline 
 $t_2^\prime(125)$    &$\sim$	&$s(dbc)$\\
  \hline
 $m_1(125)$    &$\sim$	&$s(cad)$\\
 \hline
 $s(abc)$    &$\sim$	&$s(bda)$\\
  \hline
\end{tabular}
}
\end{subfigure}
\caption{A 2-0 move eliminates $t_1^\prime$ and $\bar s$, stacked as indicated, and yields face pairings for $\tau_1$, a triangulation of $X_1$.\label{fig:2-0move}}
\end{figure}

\subsection{Adding layers to extend to $p=2$ and $3$} \label{sec:p=3}

It will be more instructive to first extend the construction of the previous section to $p=3$.  To triangulate the complement of the closure of $C^2\sigma_1^3\sigma_2^{-1}$, where there are four half-twists in the braid, begin with five layers:  $\Delta_t$, $\Delta_m$, and $\Delta_b$ and two layers inserted between $\Delta_t$ and $\Delta_m$, labelled $\Delta_{w1}$ and $\Delta_{w2}$.  Place two tetrahedra in each layer as indicated in Figure~\ref{fig:p=3}.  Though $\Delta_t$ is depicted slightly differently (as one layer with the shared face of $t_1^\prime$ and $t_2^\prime$, 025, positioned in the middle), all tetrahedra in $\Delta_t$, $\Delta_m$, and~$\Delta_b$ are named and situated exactly as in~Figure~\ref{fig:p=1}.  The new tetrahedra, those in the~$\Delta_{wi}$ layers, will be called $w_i$ and $w_i^\prime$, where a prime still indicates the tetrahedron is behind the plane of the page.

\begin{figure}[ht]\vskip.25in
\begin{overpic}[unit=.25in]{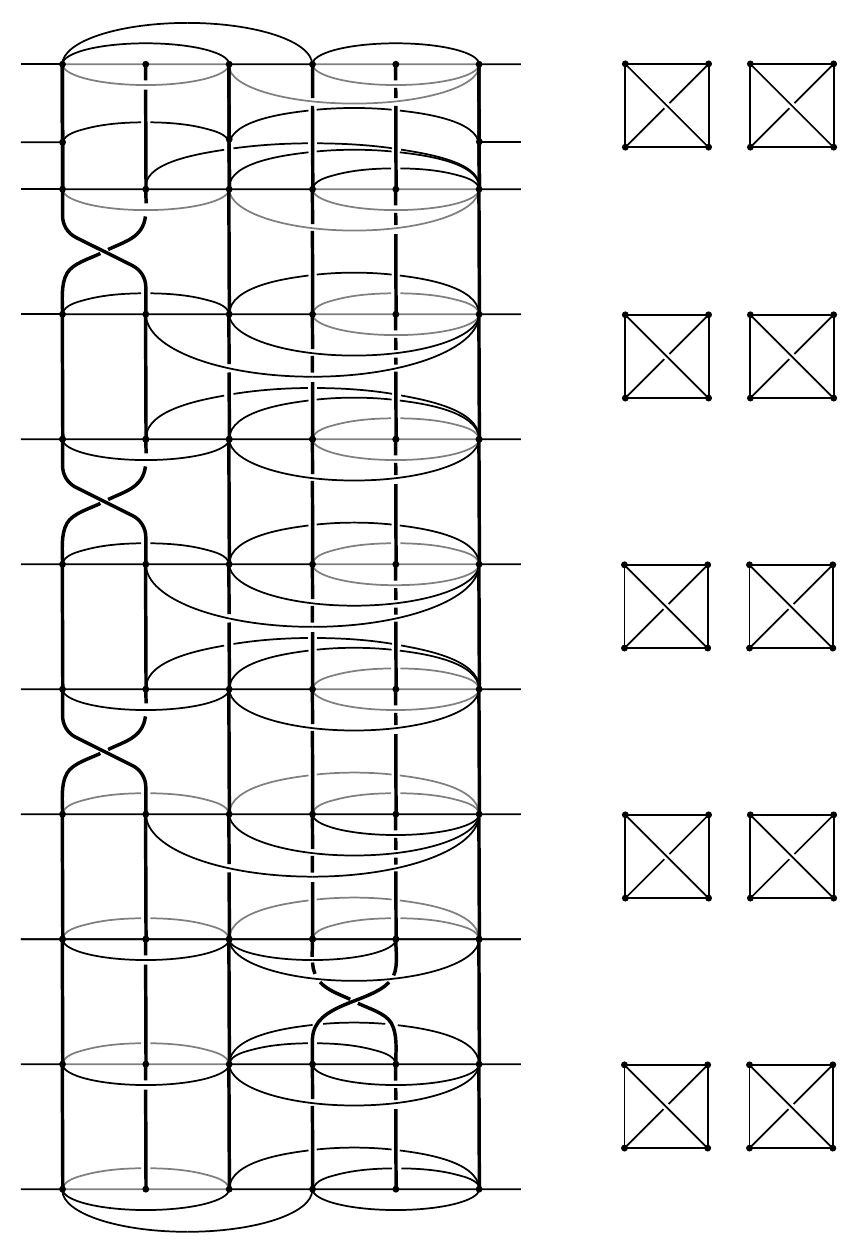}
\put(0.7, 1.1){\tiny0}\put(2.03, 1.1){\tiny1}\put(3.36, 1.1){\tiny2}\put(4.69, 1.1){\tiny3}\put(6.02, 1.1){\tiny4}\put(7.35, 1.1){\tiny5}
\put(0.7, 3.1){\tiny0}\put(2.03, 3.1){\tiny1}\put(3.36, 3.1){\tiny2}\put(4.69, 3.1){\tiny3}\put(6.02, 3.1){\tiny4}\put(7.35, 3.1){\tiny5}
\put(0.7, 5.1){\tiny0}\put(2.03, 5.1){\tiny1}\put(3.36, 5.1){\tiny2}\put(4.69, 5.1){\tiny3}\put(6.02, 5.1){\tiny4}\put(7.35, 5.1){\tiny5}
\put(0.7, 7.1){\tiny0}\put(2.03, 7.1){\tiny1}\put(3.36, 7.1){\tiny2}\put(4.69, 7.1){\tiny3}\put(6.02, 7.1){\tiny4}\put(7.35, 7.1){\tiny5}
\put(0.7, 9.1){\tiny0}\put(2.03, 9.1){\tiny1}\put(3.36, 9.1){\tiny2}\put(4.69, 9.1){\tiny3}\put(6.02, 9.1){\tiny4}\put(7.35, 9.1){\tiny5}
\put(0.7, 11.1){\tiny0}\put(2.03, 11.1){\tiny1}\put(3.36, 11.1){\tiny2}\put(4.69, 11.1){\tiny3}\put(6.02, 11.1){\tiny4}\put(7.35, 11.1){\tiny5}
\put(0.7, 13.1){\tiny0}\put(2.03, 13.1){\tiny1}\put(3.36, 13.1){\tiny2}\put(4.69, 13.1){\tiny3}\put(6.02, 13.1){\tiny4}\put(7.35, 13.1){\tiny5}
\put(0.7, 15.1){\tiny0}\put(2.03, 15.1){\tiny1}\put(3.36, 15.1){\tiny2}\put(4.69, 15.1){\tiny3}\put(6.02, 15.1){\tiny4}\put(7.35, 15.1){\tiny5}
\put(0.7, 17.1){\tiny0}\put(2.03, 17.1){\tiny1}\put(3.36, 17.1){\tiny2}\put(4.69, 17.1){\tiny3}\put(6.02, 17.1){\tiny4}\put(7.35, 17.1){\tiny5}
\put(0.7, 19.1){\tiny0}\put(2.03, 19.1){\tiny1}\put(3.36, 19.1){\tiny2}\put(4.69, 19.1){\tiny3}\put(6.02, 19.1){\tiny4}\put(7.35, 19.1){\tiny5}

\put(8,2){\small$\Delta_b=$}\put(8,6){\small$\Delta_m=$}\put(8,10){\small$\Delta_{w2}=$}\put(8,14){\small$\Delta_{w1}=$}\put(8,18){\small$\Delta_t=$}

\put(10.4,17.1){\small$t_1^\prime$}\put(12.4,17.1){\small$t_2^\prime$}
\put(10.4,13.1){\small$w_1$}\put(12.4,13.1){\small$w_1^\prime$}
\put(10.4,9.1){\small$w_2$}\put(12.4,9.1){\small$w_2^\prime$}
\put(10.4,5.1){\small$m_1$}\put(12.4,5.1){\small$m_2$}
\put(10.4,1.1){\small$b_1$}\put(12.4,1.1){\small$b_2^\prime$}

\put(9.75, 19.1){\tiny0}\put(11.35, 19.1){\tiny5}\put(11.75, 19.1){\tiny0}\put(13.35, 19.1){\tiny5}
\put(9.75, 17.4){\tiny2}\put(11.35, 17.4){\tiny3}\put(11.75, 17.4){\tiny1}\put(13.35, 17.4){\tiny2}

\put(9.75, 15.1){\tiny1}\put(11.35, 15.1){\tiny2}\put(11.75, 15.1){\tiny0}\put(13.35, 15.1){\tiny5}
\put(9.75, 13.4){\tiny0}\put(11.35, 13.4){\tiny5}\put(11.75, 13.4){\tiny1}\put(13.35, 13.4){\tiny2}

\put(9.75, 11.1){\tiny1}\put(11.35, 11.1){\tiny2}\put(11.75, 11.1){\tiny0}\put(13.35, 11.1){\tiny5}
\put(9.75, 9.4){\tiny0}\put(11.35, 9.4){\tiny5}\put(11.75, 9.4){\tiny1}\put(13.35, 9.4){\tiny2}

\put(9.75, 7.1){\tiny1}\put(11.35, 7.1){\tiny2}\put(11.75, 7.1){\tiny3}\put(13.35, 7.1){\tiny4}
\put(9.75, 5.4){\tiny0}\put(11.35, 5.4){\tiny5}\put(11.75, 5.4){\tiny2}\put(13.35, 5.4){\tiny5}

\put(9.75, 3.1){\tiny2}\put(11.35, 3.1){\tiny3}\put(11.75, 3.1){\tiny2}\put(13.35, 3.1){\tiny5}
\put(9.75, 1.4){\tiny0}\put(11.35, 1.4){\tiny5}\put(11.75, 1.4){\tiny3}\put(13.35, 1.4){\tiny4}

\end{overpic}
\caption{A triangulation of the complement of the closure of $C^2\sigma_1^3\sigma_2^{-1}$, with $\Delta_t$ depicted as one layer with the shared face of $t_1^\prime$ and $t_2^\prime$, 025, positioned in the middle.\label{fig:p=3}}
\end{figure}

The face pairings are determined in the same manner as they were for $p=1$, but now over half of the twenty face pairings result from moving through a half-twist from the bottom of one layer to the top of the next.  These appear in the left column of Table~\ref{table:p=3prelim}.  The middle column contains the face pairings that come directly from Table~\ref{table:p=1prelim}  and are unaffected by adding the $w$-layers.  The final column contains more complicated face pairings that do not occur in Table~\ref{table:p=1prelim} because these triangles hit tetrahedra in the $w$-layers before encountering their mates  from Table~\ref{table:p=1prelim}.  For example,  $t_1^\prime(035)$ is identified to $\triangle 025$ in the front, which, after passing through the half-twist, becomes $\triangle 125$, a top face of $w_1$, so $t_1^\prime(035)$ hits  $w_1$ before it gets to $m_1$.  We will call this initial triangulation~$\hat{\tau}_3$, and, as in the $p=1$ case, will perform simplifying Pachner moves on $\hat{\tau}_3$ to create $\tau_{3}$.  The shadings in Table~\ref{table:p=3prelim} will help us determine the resulting identifications.
\begin{table}[ht]
\caption{Face pairings for $\hat{\tau}_3$, an initial triangulation of $X_3$.\label{table:p=3prelim}}
\definecolor{lightgray}{gray}{0.8}
\begin{tabular}{ |l c l|  }
  \hline
\rowcolor{White} $t_2^\prime(015)$    &$\sim$	&$w_1(105)$\\
  \hline
 \rowcolor{White} $t_2^\prime(125)$    &$\sim$	&$w_1^\prime(025)$\\
  \hline
\rowcolor{White}   $w_1(025)$    &$\sim$	&$w_2(125)$\\
  \hline
\rowcolor{White}  $w_1(012)$    &$\sim$	&$w_2^\prime(102)$\\
  \hline
\rowcolor{White} $w_1^\prime(015)$    &$\sim$	&$w_2(105)$\\
  \hline
\rowcolor{White}  $w_1^\prime(125)$    &$\sim$	&$w_2^\prime(025)$\\
  \hline
\rowcolor{White}  $w_2(025)$    &$\sim$	&$m_1(125)$\\
 \hline
\rowcolor{White}  $w_2^\prime(015)$    &$\sim$	&$m_1(105)$\\
 \hline
\rowcolor{lightgray}  $m_1(025)$    &$\sim$	&$b_1(025)$\\
  \hline
\rowcolor{Gray}  $m_2(234)$    &$\sim$	&$b_2^\prime(243)$\\
  \hline
\rowcolor{Gray}   $m_2(245)$    &$\sim$	&$b_1(235)$\\
  \hline
\end{tabular}\quad
\begin{tabular}{ |l c l|  }
 \hline
\rowcolor{Gray}   $t_1^\prime(023)$    &$\sim$	&$m_2(523)$\\
  \hline
 \rowcolor{lightgray}  $t_1^\prime(025)$    &$\sim$	&$t_2^\prime(025)$\\
  \hline
\rowcolor{Gray}    $t_1^\prime(235)$    &$\sim$	&$b_2^\prime(245)$\\
  \hline
 \rowcolor{White}  $t_2^\prime(012)$    &$\sim$	&$m_1(021)$\\
  \hline
\rowcolor{Gray}   $b_1(023)$    &$\sim$	&$b_2^\prime(523)$\\
 \hline
\end{tabular}\quad
\begin{tabular}{ |l c l|  }
 \hline
  \rowcolor{lightgray}  $t_1^\prime(035)$    &$\sim$	&$w_1(125)$\\
  \hline
 \rowcolor{lightgray}   $m_2(345)$    &$\sim$	&$w_1^\prime(201)$\\
  \hline
 \rowcolor{lightgray}  $b_1(035)$    &$\sim$	&$w_2^\prime(125)$\\
 \hline
\rowcolor{lightgray}   $b_2^\prime(345)$    &$\sim$	&$w_2(201)$\\
 \hline
\end{tabular}
\end{table}

Again, three tetrahedra in $\hat{\tau}_3$ surround the 24 edge of $m_2$ (which is identified to the 23 edges of $b_1$ and $b_2^\prime$), allowing for the same 3-2 move  shown in Figure~\ref{fig:3-2move}.  Thus  $m_2$, $b_1$, and $b_2^\prime$ are replaced by $\bar s$ and $s$.  As before, we can then perform a 2-0  move collapsing $t_1^\prime$ and $\bar s$ because they are identified across the edge $23=2a$.  Again,  $t_1^\prime(025)$ and $t_1^\prime(035)$ are identified with $\bar s(c2d)$ and $\bar s(cad)$ as in Figure~\ref{fig:2-0move}.  Let~$\tau_{3}$ be the resulting triangulation of $X_3$, consisting of the seven remaining tetrahedra: $t_2^\prime$, $w_1$, $w_1^\prime$, $w_2$, $w_2^\prime$, $m_1$, and~$s$.

The face pairings in the darkest cells in Table~\ref{table:p=3prelim}  involve only the four replaced or collapsed tetrahedra ($m_2$, $b_1$, $b_2^\prime$, $t_1^\prime$), so none of them will survive the Pachner moves defining~$\tau_{3}$.  The face pairings in the unshaded cells, on the other hand, do not involve any of these tetrahedra, and are, thus, unaffected by the two Pachner moves.  

The remaining face pairings of~$\tau_{3}$ come from those in the light gray cells in Table~\ref{table:p=3prelim}.  They involve the faces of $m_2$, $b_1$, $b_2^\prime$, and~$t_1^\prime$ that survive the Pachner moves.  In other words, these faces do appear in~$\tau_{3}$.  They are just relabelled by the Pachner moves.  Consider first $m_1(025)\sim b_1(025)$.  Under the 3-2 move shown in Figure~\ref{fig:3-2move},  $b_1(025)$ becomes  $\bar s(d2c)$, which is identified by the 2-0 move in Figure~\ref{fig:2-0move} to $t_1^\prime(520)$.  This face is  paired with $t_2^\prime(520)$, so, after the two Pachner moves, $m_1(025)$ is identified with $t_2^\prime(520)$, which we choose to write as $t_2^\prime(025)\sim m_1(520)$.  

The final four faces to be relabelled --- $t_1^\prime(035)$, $m_2(345)$, $b_1(035)$, and $b_2^\prime(345)$ --- are all faces that belong to the new tetrahedra $s$.  More specifically, the 2-0 move identifies $t_1^\prime(035)$ with $\bar s(cad)$, which is the new face introduced by the 3-2 move, the face shared by $\bar s$ and $s$.  Thus, after the two Pachner moves, $t_1^\prime(035)$ becomes $s(cad)$.  The other three faces form the rest of $s$ as indicated in Figure~\ref{fig:3-2move} and are thus the faces $s(abc)$, $s(dbc)$, and $s(bad)$, respectively.  Table~\ref{table:p=3} contains all identifications of~$\tau_{3}$, where the first column still lists face pairings that result from moving from the bottom of one layer to the top of the next and the final column now shows how the new tetrahedron, $s$, glues in. 

\begin{table}[ht]
\caption{Face pairings for~$\tau_{3}$, a triangulation of $X_3$.\label{table:p=3}}
\begin{tabular}{ |l c l|  }
   \hline
$t_2^\prime(015)$    &$\sim$	&$w_1(105)$\\
  \hline
$t_2^\prime(125)$    &$\sim$	&$w_1^\prime(025)$\\
  \hline
$w_1(025)$    &$\sim$	&$w_2(125)$\\
  \hline
 $w_1(012)$    &$\sim$	&$w_2^\prime(102)$\\
  \hline
$w_1^\prime(015)$    &$\sim$	&$w_2(105)$\\
  \hline
 $w_1^\prime(125)$    &$\sim$	&$w_2^\prime(025)$\\
  \hline
  $w_2(025)$    &$\sim$	&$m_1(125)$\\
 \hline
$w_2^\prime(015)$    &$\sim$	&$m_1(105)$\\
 \hline 
 \end{tabular}\quad
\begin{tabular}{ |l c l|  }
 \hline
 $t_2^\prime(012)$    &$\sim$	&$m_1(021)$\\
  \hline
 $t_2^\prime(025)$    &$\sim$	&$m_1(520)$\\
  \hline
 \end{tabular}
 \quad
\begin{tabular}{ |l c l|  }
 \hline
$s(cad)$    &$\sim$	&$w_1(125)$\\
  \hline
$s(abc)$    &$\sim$	&$w_1^\prime(201)$\\
  \hline
$s(dbc)$    &$\sim$	&$w_2^\prime(125)$\\
 \hline
$s(bad)$    &$\sim$	&$w_2(201)$\\
 \hline
\end{tabular}
\end{table}

The reader likely anticipates that we will create the desired triangulation of~$X_{p}$ in two steps:   First form an initial triangulation, $\hat{\tau}_p$, consisting of the tetrahedra in Figure~\ref{fig:p=1} together with some $w$-layers between $\Delta_t$ and $\Delta_m$ and then perform two Pachner moves --- a correct guess that will be made explicit in the next section.  Before moving on, we make some observations about the $p=3$ construction and address the $p=2$ case.

Figure~\ref{fig:p=3} shows that $t_2^\prime$ is situated and labelled in exactly the same way as $w_1^\prime$ and~$w_2^\prime$, so the labels on the bottom faces of $t_2^\prime$, 015 and 125, are the same as those on $w_1^\prime$ and~$w_2^\prime$.  Furthermore, as the face $t_2^\prime(015)$ slides through the half-twist, it rotates to the front, matching with $w_1(105)$, just as the 015 face of $w_1^\prime$ matches with $w_2(105)$.  Similarly, as the face $t_2^\prime(125)$ slides through the half-twist, it stays in the back, matching with $w_1^\prime(025)$, just as the the 125 face of $w_1^\prime$ matches with $w_2^\prime(025)$.  In these identifications, $t_2^\prime$ is acting as a tetrahedron labelled $w_0^\prime$ would.  A similar analysis at the bottom of the $w$-layers shows that the top faces of $m_1$ glue to the bottom faces of $w_2$ in the same way those in a tetrahedron labelled $w_3$ would.

Using this relabelling, we make several observations about the face pairings in~$\tau_{3}$.

\begin{itemize}

\item
By relabelling $t_2^\prime$ as $w_0^\prime$ and $m_1$ as $w_3$, the unshaded face pairings in the first column of Table~\ref{table:p=3prelim} can be summarized as follows:
\begin{center}
\begin{tabular}{ l c l l  }

$w_i^\prime(015)$    &$\sim$	&$w_{i+1}(105)$	&for $i=0,1,2$;\\

 $w_i^\prime(125)$    &$\sim$	&$w_{i+1}^\prime(025)$	&for $i=0,1$;\\

$w_i(025)$    &$\sim$	&$w_{i+1}(125)$	&for $i=1,2$; and\\

$w_i(012)$    &$\sim$	&$w_{i+1}^\prime(102)$	&for $i=1$.\\

 \end{tabular}
\end{center}
These inter-$w$ identifications of $\hat{\tau}_3$ are not affected by Pachner moves involving $m_2$, $b_1$, $b_2^\prime$, and~$t_1^\prime$, so they are also face pairings of~$\tau_{3}$, as shown in the first column of Table~\ref{table:p=3}.

\item
The top faces of $w_0^\prime$ are not included in the identifications listed above, and neither are the bottom faces of $w_3$.  These faces are identified to each other.  More specifically, after the Pachner moves, $w_0^\prime(012)\sim w_3(021)$ and $w_0^\prime(025)\sim w_3(520)$.  Both of these pairings also occur in~$\tau_{1}$ (see the table in Figure~\ref{fig:2-0move}) and were, thus, unaffected by the addition of the $w$-layers.  These face pairings appear in the second column of Table~\ref{table:p=3}.

\item
The face pairings in the third column of Table~\ref{table:p=3}, those involving~$s$, can be characterized as follows:  Two of the faces of $s$ are identified to top faces in the first $w$-layer, $w_1(125)$ and $w_1^\prime(201)$, and the other two are identified to bottom faces in the last $w$-layer, $w_2(201)$ and $w_2^\prime(125)$.
\end{itemize}

Properly interpreted, these observations also apply when $p=2$.  Let the triangulation~$\tau_{2}$ of $X_{2}$, the complement of the closure of $C^2\sigma_1^2\sigma_2^{-1}$, consist of $w_0^\prime$, $w_1$, $w_1^\prime$, $w_2$, and $s$.  Because there is only one $w$-layer, it is both the first $w$-layer and the last $w$-layer, and the final observation tells us that the faces of $s$ are identified to four faces in~$\Delta_{w1}$.  Harvesting the remaining face pairings from the first two observations, we obtain the face pairings for~$\tau_{2}$ given in Table~\ref{table:p=2}.

\begin{table}[ht]
\caption{Face pairings for~$\tau_{2}$, a triangulation of $X_{2}$.\label{table:p=2}}
\begin{tabular}{ |l c l|  }
   \hline
$w_0^\prime(015)$    &$\sim$	&$w_1(105)$\\
  \hline
$w_0^\prime(125)$    &$\sim$	&$w_1^\prime(025)$\\
  \hline
$w_1(025)$    &$\sim$	&$w_2(125)$\\
  \hline
$w_1^\prime(015)$    &$\sim$	&$w_2(105)$\\
  \hline
 \end{tabular}\quad
\begin{tabular}{ |l c l|  }
 \hline
 $w_0^\prime(012)$    &$\sim$	&$w_2(021)$\\
  \hline
 $w_0^\prime(025)$    &$\sim$	&$w_2(520)$\\
  \hline
 \end{tabular}\quad
\begin{tabular}{ |l c l|  }
 \hline
$s(cad)$    &$\sim$	&$w_1(125)$\\
  \hline
$s(abc)$    &$\sim$	&$w_1^\prime(201)$\\
  \hline
$s(dbc)$    &$\sim$	&$w_1^\prime(125)$\\
 \hline
$s(bad)$    &$\sim$	&$w_1(201)$\\
 \hline
\end{tabular}
\end{table}

\subsection{An ideal triangulation for any $p$} \label{sec:anyp}

Guided by the previous examples along with Figure~\ref{fig:p=3},  form an initial triangulation, $\hat{\tau}_p$,  of $X_{p}$, the complement of the closure of $C^2\sigma_1^p\sigma_2^{-1}$, that consists of $2p+4$ tetrahedra:  $t_1^\prime$ and $w_0^\prime$ (n\'ee~$t_2^\prime$) in $\Delta_t$; $w_p$ (n\'ee~$m_1$) and $m_2$ in $\Delta_m$; $b_1$ and $b_2^\prime$ in $\Delta_b$; and $w_i$ and $w_i^\prime$ in $\Delta_{wi}$ where $i=1,\dots, p-1$.  The face pairings of $\hat{\tau}_p$ are determined by sliding faces along the braid and using the full-twist identifications at the top and bottom of the product region as described in~\ref{sec:p=1} and~\ref{sec:p=3}.  As before, the addition of the $w$-layers does not affect identifications amongst $m_2$, $b_1$, $b_2^\prime$, and $t_1^\prime$, so the Pachner moves depicted in Figures~\ref{fig:3-2move} and~\ref{fig:2-0move} can be applied.  Define $\tau_{p}$ to be the resulting triangulation.  As with $\tau_{2}$, the observations about $\tau_{3}$ from the previous section apply to $\tau_{p}$, yielding the  face pairings listed in Table~\ref{table:anyp} for $p>1$.  (For $p=1$, see Figure~\ref{fig:2-0move}.)

\begin{table}[ht]
\caption{Face pairings for~$\tau_{p}$, a triangulation of $X_{p}$ for $p>1$.\label{table:anyp}}
\begin{tabular}{ |l c l l |}
   \hline
$w_i^\prime(015)$    &$\sim$	&$w_{i+1}(105)$ & $i=0,\dots,p-1$\\
  \hline
 $w_i^\prime(125)$    &$\sim$	&$w_{i+1}^\prime(025)$ & $i=0,\dots,p-2$\\
  \hline
$w_i(025)$    &$\sim$	&$w_{i+1}(125)$ & $i=1,\dots,p-1$\\
  \hline
 $w_i(012)$    &$\sim$	&$w_{i+1}^\prime(102)$ &$i=1,\dots,p-2$\\
  \hline
  $w_0^\prime(012)$    &$\sim$	&$w_p(021)$&\\
  \hline
$w_0^\prime(025)$    &$\sim$	&$w_p(520)$&\\
  \hline
$s(cad)$    &$\sim$	&$w_1(125)$ &\\
  \hline
$s(abc)$    &$\sim$	&$w_1^\prime(201)$&\\
  \hline
$s(dbc)$    &$\sim$	&$w_{p-1}^\prime(125)$&\\
 \hline
$s(bad)$    &$\sim$	&$w_{p-1}(201)$&\\
 \hline
 \end{tabular}
\end{table}

The triangulation $\hat{\tau}_p$ had $2p+4$ tetrahedra.  The 3-2 Pachner move reduced this number by one and the 2-0 Pachner move further reduced the number of tetrahedra by two, so $\tau_{p}$ has $2p+1$ tetrahedra and thus $4p+2$ face pairings.  There are exactly $4p+2$ identifications in Table~\ref{table:anyp} (and six in Figure~\ref{fig:2-0move}).  Thus, the definition of the triangulation~$\tau_{p}$ is complete.  

Note that most of the tetrahedra in~$\tau_{p}$ (all but $s$) are visible in the complement of the 6-braid $\sigma_1^p\sigma_4^{-1}$.  Using this visualization can help us quickly deduce relationships between the tetrahedra.  For example, Figure~\ref{fig:anyp} shows how $w_0^\prime$, $w_1$, $w_1^\prime$, $\dots$, $w_{p-1}$, $w_{p-1}^\prime$, and $w_p$ are situated in the braid complement, and all but six face pairings of~$\tau_{p}$ can be derived directly from following their faces through the half-twists in the braid.

\begin{figure}[ht]\vskip.25in
\begin{overpic}[unit=.25in]{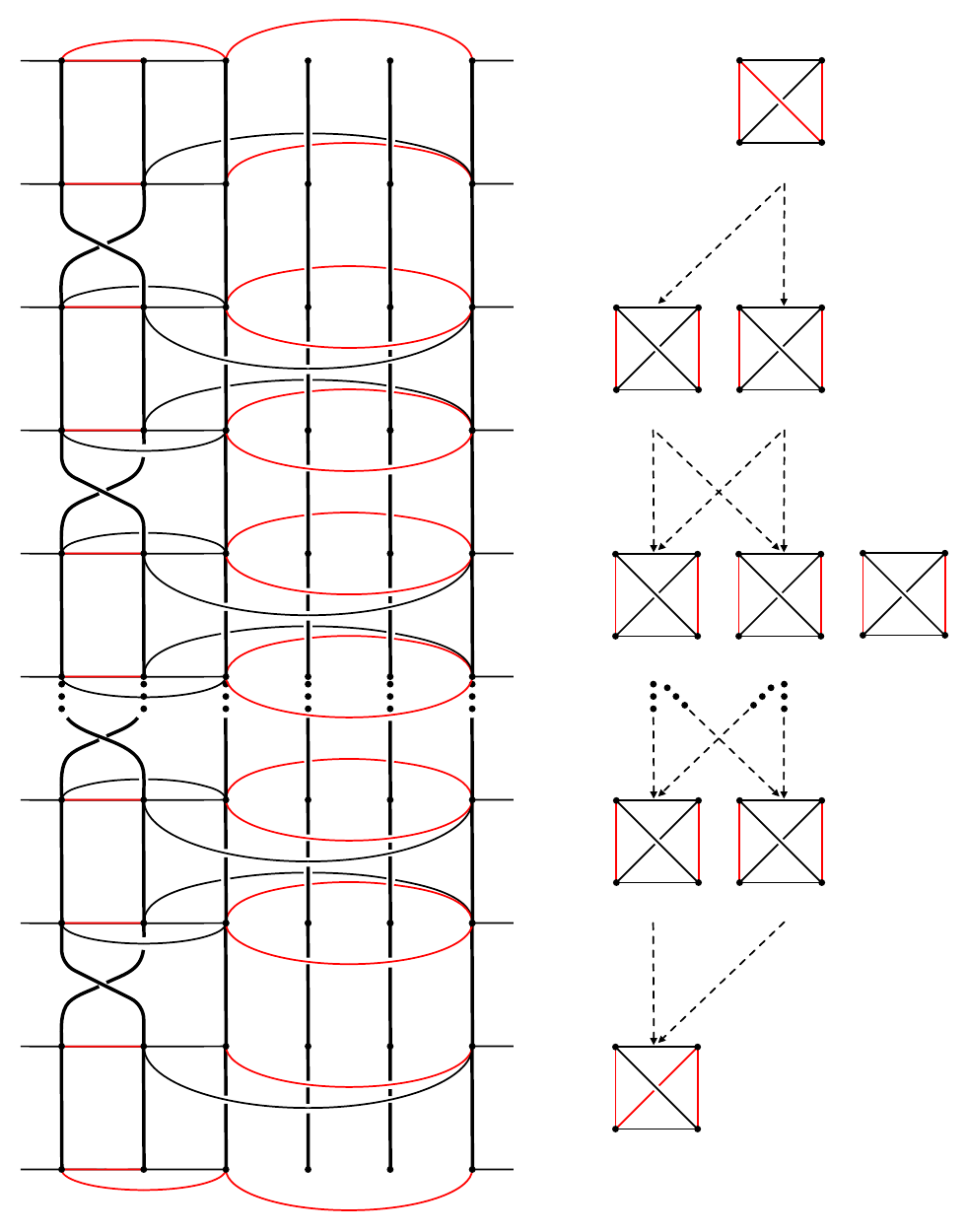}
\put(0.7, 1.1){\tiny0}\put(2.03, 1.1){\tiny1}\put(3.36, 1.1){\tiny2}\put(4.69, 1.1){\tiny3}\put(6.02, 1.1){\tiny4}\put(7.35, 1.1){\tiny5}
\put(0.7, 3.1){\tiny0}\put(2.03, 3.1){\tiny1}\put(3.36, 3.1){\tiny2}\put(4.69, 3.1){\tiny3}\put(6.02, 3.1){\tiny4}\put(7.35, 3.1){\tiny5}
\put(0.7, 5.1){\tiny0}\put(2.03, 5.1){\tiny1}\put(3.36, 5.1){\tiny2}\put(4.69, 5.1){\tiny3}\put(6.02, 5.1){\tiny4}\put(7.35, 5.1){\tiny5}
\put(0.7, 7.1){\tiny0}\put(2.03, 7.1){\tiny1}\put(3.36, 7.1){\tiny2}\put(4.69, 7.1){\tiny3}\put(6.02, 7.1){\tiny4}\put(7.35, 7.1){\tiny5}
\put(0.7, 9.1){\tiny0}\put(2.03, 9.1){\tiny1}\put(3.36, 9.1){\tiny2}\put(4.69, 9.1){\tiny3}\put(6.02, 9.1){\tiny4}\put(7.35, 9.1){\tiny5}
\put(0.7, 11.1){\tiny0}\put(2.03, 11.1){\tiny1}\put(3.36, 11.1){\tiny2}\put(4.69, 11.1){\tiny3}\put(6.02, 11.1){\tiny4}\put(7.35, 11.1){\tiny5}
\put(0.7, 13.1){\tiny0}\put(2.03, 13.1){\tiny1}\put(3.36, 13.1){\tiny2}\put(4.69, 13.1){\tiny3}\put(6.02, 13.1){\tiny4}\put(7.35, 13.1){\tiny5}
\put(0.7, 15.1){\tiny0}\put(2.03, 15.1){\tiny1}\put(3.36, 15.1){\tiny2}\put(4.69, 15.1){\tiny3}\put(6.02, 15.1){\tiny4}\put(7.35, 15.1){\tiny5}
\put(0.7, 17.1){\tiny0}\put(2.03, 17.1){\tiny1}\put(3.36, 17.1){\tiny2}\put(4.69, 17.1){\tiny3}\put(6.02, 17.1){\tiny4}\put(7.35, 17.1){\tiny5}
\put(0.7, 19.1){\tiny0}\put(2.03, 19.1){\tiny1}\put(3.36, 19.1){\tiny2}\put(4.69, 19.1){\tiny3}\put(6.02, 19.1){\tiny4}\put(7.35, 19.1){\tiny5}

\put(12.4,17.1){\small$w_0^\prime$}
\put(10.4,13.1){\small$w_1$}\put(12.4,13.1){\small$w_1^\prime$}
\put(10.4,9.1){\small$w_2$}\put(12.4,9.1){\small$w_2^\prime$}\put(14.5,9.1){\small$s$}
\put(10.4,5.1){\small$w_{p-1}$}\put(12.4,5.1){\small$w_{p-1}^\prime$}
\put(10.4,1.1){\small$w_p$}

\put(11.75, 19.1){\tiny0}\put(13.35, 19.1){\tiny5}
\put(11.75, 17.4){\tiny1}\put(13.35, 17.4){\tiny2}

\put(9.75, 15.1){\tiny1}\put(11.35, 15.1){\tiny2}\put(11.75, 15.1){\tiny0}\put(13.35, 15.1){\tiny5}
\put(9.75, 13.4){\tiny0}\put(11.35, 13.4){\tiny5}\put(11.75, 13.4){\tiny1}\put(13.35, 13.4){\tiny2}

\put(9.75, 11.1){\tiny1}\put(11.35, 11.1){\tiny2}\put(11.75, 11.1){\tiny0}\put(13.35, 11.1){\tiny5}\put(13.75, 11.1){\tiny d}\put(15.35, 11.1){\tiny c }
\put(9.75, 9.4){\tiny0}\put(11.35, 9.4){\tiny5}\put(11.75, 9.4){\tiny1}\put(13.35, 9.4){\tiny2}\put(13.75, 9.4){\tiny a}\put(15.35, 9.4){\tiny b}

\put(9.75, 7.1){\tiny1}\put(11.35, 7.1){\tiny2}\put(11.75, 7.1){\tiny0}\put(13.35, 7.1){\tiny5}
\put(9.75, 5.4){\tiny0}\put(11.35, 5.4){\tiny5}\put(11.75, 5.4){\tiny1}\put(13.35, 5.4){\tiny2}

\put(9.75, 3.1){\tiny1}\put(11.35, 3.1){\tiny2}
\put(9.75, 1.4){\tiny0}\put(11.35, 1.4){\tiny5}

\end{overpic}
\caption{The tetradedra of~$\tau_{p}$ with $[w_0^\prime(01)]$ colored red.  All but $s$ are shown in the braid complement on the left.  The dashed arrows on the right indicate that a bottom face of the originating tetrahedron is identified through a half-twist to a top face of the target tetrahedron.  Among faces unpaired by half-twists, the top faces of $w_0^\prime$ are identified to the bottom faces of $w_p$ and $s$ is glued to the top faces of $w_1$ and $w_1^\prime$ and the bottom faces of $w_{p-1}$ and $w_{p-1}^\prime$ as in Table~\ref{table:anyp}.\label{fig:anyp}}
\end{figure}

\subsection{Equivalence classes of edges in the triangulation} \label{sec:edges}

Knowing the relationships between the edges of $\tau_{p}$ will prove helpful because their equivalence classes form the edges of $X_{p}$ and the (internal) dihedral angles along these edges will dictate its geometry.  In determining equivalence classes, we can use Figure~\ref{fig:anyp} or Table~\ref{table:anyp}, whichever is easier.

As an example, we will determine all edges identified to the 01 edge of $w_0^\prime$.  In the braid picture, $w_0^\prime(01)$ slides down through the first $\sigma_1$ half-twist to be identified with $w_1(10)$ and $w_1^\prime(10)$, so the equivalence class of $w_0^\prime(01)$, $[w_0^\prime(01)]$, contains the 01 edge of $w_1$ and of $w_1^\prime$.  Continuing to move the~01 edge through more half-twists shows the 01 edges of $w_i$ and $w_i^\prime$ belong to $[w_0^\prime(01)]$ for all~$i$.  Two edges of $s$ also appear in this equivalence class, which can be seen most easily in Table~\ref{table:anyp}:  $w_1^\prime(01)\sim s(bc)$ and $w_{p-1}(01)\sim s(ad)$.  These edges of $s$, $s(bc)$ and $s(ad)$, are also identified to $w_{p-1}^\prime(25)$ and $w_1(25)$, respectively, which can themselves be slid along along the braid to show that the 25 edges of each $w_i$ and~$w_i^\prime$ are in~$[w_0^\prime(01)]$.  We have not yet considered face pairings between $w_0^\prime$ and $w_p$.  The first in Table~\ref{table:anyp}, $w_0^\prime(012)\sim w_p(021)$ shows $w_p(02)$ and $w_0^\prime(02)$ are also in~$[w_0^\prime(01)]$.  Based on how the tetrahedra are positioned in the braid complement, 02 is the top edge in $w_0^\prime$ and the bottom edge in $w_p$, which allows us to describe~$[w_0^\prime(01)]$ as follows.

\begin{observation}\label{obs:veeringred}
The equivalence class $[w_0^\prime(01)]$ contains a total of $4p+4$ edges:  the 01 edges and 25 edges of each $w_i$ and $w_i^\prime$; $s(bc)$ and $s(ad)$; the top edge of $w_0^\prime$ and the bottom edge of $w_p$.  These edges are colored red in Figure~\ref{fig:anyp}.
\end{observation}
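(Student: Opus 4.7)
Plan: To pin down the equivalence class $[w_0^\prime(01)]$, I trace its orbit under the edge identifications induced by the face pairings of Table~\ref{table:anyp} (with Figure~\ref{fig:2-0move} handling $p=1$). Each face pairing identifies two triangles and hence matches three pairs of edges, and propagating these matches until closure pins down the whole class.

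The tracing splits into three stages corresponding to the three sources of face pairings in the triangulation. First, the boundary pairings $w_0^\prime(012)\sim w_p(021)$ and $w_0^\prime(025)\sim w_p(520)$ immediately give $w_0^\prime(01)\sim w_p(02)\sim w_0^\prime(25)$, bringing $w_p(02)$, $w_0^\prime(02)$, and $w_0^\prime(25)$ into the class. (The two $02$-edges just named are the ``top edge of $w_0^\prime$'' and the ``bottom edge of $w_p$'' named in the statement---they are the edges common to the respective pairs of faces joined by these two boundary pairings.) Second, the half-twist pairings $w_i^\prime(015)\sim w_{i+1}(105)$ and $w_i(012)\sim w_{i+1}^\prime(102)$ yield an alternating chain of $01$-edges $w_0^\prime(01)\sim w_1(01)\sim w_2^\prime(01)\sim w_3(01)\sim\cdots$, while the companion pairings $w_i^\prime(125)\sim w_{i+1}^\prime(025)$ and $w_i(025)\sim w_{i+1}(125)$ propagate the $25$-edges through the primed and unprimed $w$-tetrahedra respectively. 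Third, the four $s$-pairings give $s(bc)\sim w_1^\prime(01)\sim w_{p-1}^\prime(25)$ and $s(ad)\sim w_{p-1}(01)\sim w_1(25)$; these two bridges fold the ``other half'' of the $01$-alternating chain, $w_1^\prime(01)\sim w_2(01)\sim w_3^\prime(01)\sim\cdots$, into the class and stitch the two $25$-chains together.

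After this propagation I check closure: for each of the ten pairing templates in Table~\ref{table:anyp}, I verify that its three induced edge identifications either relate two edges both in the proposed $4p+4$-element list or relate two edges both outside it. The count is then straightforward: $p$ edges each of the four types $w_i(01)$, $w_i^\prime(01)$, $w_i(25)$, $w_i^\prime(25)$, together with $s(bc)$, $s(ad)$, and the two boundary $02$-edges, giving $4p+4$. The main obstacle is the bookkeeping---tracking the alternating parities in the $01$-chains, confirming that the $s$-bridges produce exactly the connections needed and no more, and verifying that the remaining edges of $s$ and the $05$ and $12$ edges of each $w$-tetrahedron form distinct equivalence classes that do not pull additional members into $[w_0^\prime(01)]$.
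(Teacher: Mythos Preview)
Your proposal is correct and follows essentially the same strategy as the paper---propagate $w_0^\prime(01)$ through the face pairings until the class closes up---but the execution differs in one notable way. The paper works primarily from the braid picture (Figure~\ref{fig:anyp}): since $w_i$ and $w_i^\prime$ sit in the same layer and physically share the edge $01$ (and likewise $25$), sliding $w_0^\prime(01)$ through a single half-twist lands on \emph{both} $w_1(01)$ and $w_1^\prime(01)$ at once, so all $01$ edges fall into the class immediately without needing to split into alternating chains. You instead work purely from Table~\ref{table:anyp}, where this sharing is not directly visible, and must connect your two alternating $01$-chains via the $s$-bridges or the boundary pairings. Both routes reach the same conclusion; yours is more combinatorial and adds an explicit closure check, which the paper leaves implicit.

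One small correction to your narrative: in your first stage you say the two boundary pairings ``immediately'' bring $w_0^\prime(02)$ into the class. They do not---those pairings only identify $w_0^\prime(02)$ with $w_p(01)$ and $w_p(25)$, neither of which is yet known to lie in $[w_0^\prime(01)]$ at that point. The connection closes only after the half-twist chains (or $s$-bridges) carry $w_p(01)$ back to $w_0^\prime(01)$. This does not damage your argument, since your closure check at the end verifies everything, but the ordering of your exposition is slightly ahead of itself.
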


\begin{observation}\label{obs:veeringblue5}
A similar analysis shows that $X_{p}$ has two edges with degree~5.  The edge $[w_0^\prime(05)]$ arises from 
$$
w_0^\prime(05)\sim w_1(15)\sim s(cd)\sim w_{p-1}^\prime(51)\sim w_p(05)\sim_{\text{back to}} w_0^\prime(05)
$$
and, when $p>1$, the edge $[w_0^\prime(12)]$ from
$$
w_0^\prime(12)\sim w_1^\prime(02)\sim s(ba)\sim w_{p-1}(20)\sim w_p(21)\sim_{\text{back to}} w_0^\prime(12).
$$
When $p=1$, $w_1^\prime(02)$ and $w_{p-1}(20)$ are undefined and are replaced by $s(db)$ and~$s(ac)$.
\end{observation}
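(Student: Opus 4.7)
The plan is to mimic the proof of Observation~\ref{obs:veeringred}: for each equivalence class, apply the face pairings of Table~\ref{table:anyp} successively and verify both that the cycle closes after exactly five edges and that the five edges are distinct. Each link in the cycle is read off a single row of the table by applying the vertex-correspondence determined by the ordered identification, so the whole argument is essentially a bookkeeping verification. The principal organizational point is to travel ``around'' the triangulation by alternating identifications among the $w$-tetrahedra, the $s$-tetrahedron, and the half-twist identifications $w_i'(015)\sim w_{i+1}(105)$ and $w_i(025)\sim w_{i+1}(125)$ that span the layers.

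For $[w_0'(05)]$, I would trace: start from $w_0'(05)$ and apply $w_0'(015)\sim w_1(105)$ to reach $w_1(15)$; use $s(cad)\sim w_1(125)$ to pass into $s$ as $s(cd)$; then $s(dbc)\sim w_{p-1}'(125)$ carries this edge to $w_{p-1}'(51)$; next $w_{p-1}'(015)\sim w_p(105)$ delivers $w_p(05)$; and finally $w_0'(025)\sim w_p(520)$ closes the cycle back to $w_0'(05)$. Checking that the five edges are pairwise distinct is immediate for $p>1$ since they lie in four different tetrahedra, and for $p=1$ the edges $w_0'(05)$ and $w_0'(15)=w_{p-1}'(51)$ are still distinct edges of the same tetrahedron.

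For $[w_0'(12)]$ with $p>1$, the analogous trace uses: $w_0'(125)\sim w_1'(025)$ to get $w_1'(02)$; $s(abc)\sim w_1'(201)$ to get $s(ba)$; $s(bad)\sim w_{p-1}(201)$ to get $w_{p-1}(20)$; $w_{p-1}(025)\sim w_p(125)$ to get $w_p(21)$; and $w_0'(012)\sim w_p(021)$ to close. In each case, the key is simply to record the vertex correspondence carefully (e.g., in $s(abc)\sim w_1'(201)$ we have $a\leftrightarrow 2$, $b\leftrightarrow 0$, $c\leftrightarrow 1$, so the edge $s(ab)$ pairs with $w_1'(20)$).

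The main obstacle, such as it is, is the degenerate case $p=1$, where the general scheme fails because neither $w_1'$ nor $w_0$ appears in $\tau_1$. Here one must replace Table~\ref{table:anyp} with the list of face pairings in Figure~\ref{fig:2-0move}, and in particular invoke the self-identification $s(abc)\sim s(bda)$ of the tetrahedron $s$. Under this pairing, the vertex correspondence $a\leftrightarrow b$, $b\leftrightarrow d$, $c\leftrightarrow a$ forces $s(ab)\sim s(bd)$ and $s(ac)\sim s(ba)$, producing the chain $s(db)\sim s(ba)\sim s(ac)$ inside $s$ itself. Combining this with the pairings $t_2'(125)\sim s(dbc)$ and $m_1(125)\sim s(cad)$ and $t_2'(012)\sim m_1(021)$ yields the cycle $w_0'(12)\sim s(db)\sim s(ba)\sim s(ac)\sim w_1(21)\sim w_0'(12)$, confirming the stated substitutions. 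Once these five edges are seen to be distinct, the observation follows.
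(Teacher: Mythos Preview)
Your proposal is correct and is exactly the ``similar analysis'' the paper intends: trace each edge around its cycle using the face pairings of Table~\ref{table:anyp} (or Figure~\ref{fig:2-0move} when $p=1$), check that the cycle closes after five steps, and verify the five edges are distinct. One small slip: for $p>1$ the five edges of $[w_0'(05)]$ lie in \emph{five} different tetrahedra ($w_0'$, $w_1$, $s$, $w_{p-1}'$, $w_p$), not four, so distinctness is even more immediate than you claim; the same holds for $[w_0'(12)]$.
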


It turns out that the remaining $2p-2$ edges have degree~4.

\section{Angle structures and the Casson-Rivin program}\label{sec:anglestructures}  

We will prove that the triangulation~$\tau_{p}$ of $X_{p}$ defined in~\ref{sec:anyp} is geometric by applying the Casson-Rivin program.  This section provides some definitions,  useful results, and a basic outline of the program.  For more details and a beautiful exposition, including an elementary proof of the Casson-Rivin Theorem, the reader is encouraged to read Futer and Gu\'eritaud's paper~\cite{AngledtoHyperbolic}.

\subsection{Angle structures} \label{sec:definitions}  

To move from a topological triangulation to a geometric one, we need a way to impose a hyperbolic structure on each tetrahedron where the face pairings are hyperbolic isometries. A start is to assign angles to the edges of the triangulation.

\begin{defn}\label{def:anglestructure}
An \emph{angle structure} on an ideal triangulation $\tau$ of a 3-manifold is an assignment of angles to the edges of the tetrahedra in~$\tau$ satisfying the following conditions:
\begin{enumerate}
\item Angles assigned to opposite edges of a tetrahedron are equal, meaning it is enough to specify three angles $\theta_{3i-2}$, $\theta_{3i-1}$, and $\theta_{3i}$, for each tetrahedron~$T_i\in\tau$;
\item $\theta_{3i-2}+\theta_{3i-1}+\theta_{3i} = \pi$ for all~$i$; and
\item The sum of the angles surrounding an edge of the 3-manifold equals $2\pi$.
\end{enumerate}
\end{defn}

This paper utilizes both \emph{taut angle structures}, where $\theta_k\in\{0,\pi\}$ for all~$k$, and \emph{positive angle structures}, where $\theta_k>0$ for all~$k$.  Note that Condition~(2) of the definition implies that a positive angle structure must have all angles strictly between~0 and~$\pi$.  In this case, the three angles that sum to~$\pi$ specify a \emph{positively oriented ideal hyperbolic tetrahedron}.  The tetrahedron is isometric to one whose vertices are at $0$, $1$, $\infty$, and a complex number~$z$ with $\text{Im}(z)>0$, where $0$, $1$, and $z$ form a Euclidean triangle having the assigned angles.  If $\text{Im}(z)=0$, the tetrahedron is said to be \emph{degenerate}, and, if $\text{Im}(z)<0$, \emph{negatively oriented}.  If an angle structure~$\theta$ assigns the same angles to two tetrahedra, $T_1$ and $T_2$, then we will say they are \emph{isometric with respect to $\theta$} and write $T_1\cong_\theta T_2$.  

We use $\mathcal A(\tau)$ to denote the space of all positive angle structures on a triangulation~$\tau$ and note that, if $\tau$ has $n$ tetrahedra, $\mathcal A(\tau)\subset(0,\pi)^{3n}$ is a convex polytope with compact closure.  As such, the points obtained through coordinate-by-coordinate averaging of elements of $\mathcal A(\tau)$ are again in $\mathcal A(\tau)$.  Also, real-valued functions defined on $\overline{\mathcal A(\tau)}$ attain their extreme values. See \cite[Proposition~3.2]{AngledtoHyperbolic} for more details about the space of angle structures.

As will be described in~\ref{sec:CR}, the initial step in the Casson-Rivin program is to argue that $\mathcal A(\tau)$ is nonempty, i.e., that the triangulation admits a positive angle structure.  We accomplish this in our case by first showing (in~\ref{sec:veering}) that~$\tau_{p}$ admits a special type of taut angle structure --- a veering angle structure.   The notion of \lq\lq veering,'' first introduced by Agol~\cite{AgolVeering}, has several equivalent formulations.  The simplest to apply to our triangulations is from~\cite{HodgsonEtal}.

\begin{defn}\label{def:veering}
A \emph{veering angle structure} on an ideal triangulation $\tau$ of an oriented 3-manifold $M$ is a taut angle structure meeting an additional condition:  The edges of $M$ can be colored red and blue in such a way that within each tetrahedron
\begin{itemize}
    \item two of the edges with angle~0 are red and two are blue (the color on an angle~$\pi$ edge can be either red or blue)
    \item when viewed from any of the four ideal vertices, the $\pi$-angled edge is followed in the counterclockwise direction by a blue edge and then a red edge.    
 \end{itemize}
Each tetrahedron in a veering triangulation appears as the one in Figure~\ref{fig:veering}, where the $\pi$-angled edges on the diagonal can be red or blue.  If there is also a consistent \lq\lq upward'' orientation on the four faces (for example out of the page in Figure~\ref{fig:veering}), the angle structure is \emph{transverse taut}.  A triangulation that admits a veering angle structure is said to be \emph{veering}.
\end{defn} 
\begin{figure}[ht]
\includegraphics{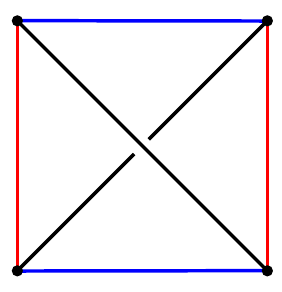}
\caption{A tetrahedron in a veering triangulation.\label{fig:veering}}
\end{figure}

 Positive angle structures give geometric structures on the tetrahedra in a triangulation.  In turn, these structures determine the face-pairing isometries.  Condition~(3) of Definition~\ref{def:anglestructure} guarantees that, under these isometries, the tetrahedra fit together to fill the space around the edges of the manifold, because the sum of dihedral angles around any edge is~$2\pi$.  However, there is no guarantee that the result is metrically complete.  In fact, if a 3-manifold has a complete hyperbolic structure of finite volume, Mostow-Prasad rigidity ensures that the structure is unique up to isometry \cite{MostowRigidity} \cite{PrasadRigidity}, so one should not expect completeness for an arbitrary point in~$\mathcal A(\tau)$.  It turns out that the independent work of Casson and Rivin~\cite{Rivin}, which forms the basis of the Casson-Rivin program in~\cite{AngledtoHyperbolic} and outlined in~\ref{sec:CR}, connects the question of completeness to the volume associated to an angle structure in~$\mathcal A(\tau)$, i.e., the sum of the volumes of the corresponding hyperbolic tetrahedra.
 
 \subsection{Volumes of angle structures} \label{sec:volume}

Recall (\cite{MilnorHypGeom}, for example) that the volume of an ideal hyperbolic tetrahedron, $T$, with dihedral angles $\theta_1$, $\theta_2$, and $\theta_3$ is given by
$$
V(T)=\lob(\theta_1)+\lob(\theta_2)+\lob(\theta_3),
$$
where the Lobachevsky function, $\lob$, is defined as
$$
\lob(x)=-\int_0^x\log|2\sin t|\,dt.
$$  
\begin{defn}\label{def:volume}
For a point $\theta\in\mathcal A(\tau)$, which specifies angles for the $n$ tetrahedra $T_i\in\tau$ and has all $\theta_i\in(0,\pi)$, define the \emph{volume associated to $\theta$}, $\mathcal{V}(\theta)$, by
$$
\mathcal V(\theta) =
\displaystyle{\sum_{i=1}^{n}V(T_i)}= \displaystyle{\sum_{i=1}^{3n}\lob(\theta_i)} 
$$
 and note that $\mathcal{V}(\theta)>0$ for $\theta\in\mathcal A(\tau)$.
\end{defn}

As summarized in~\cite{AngledtoHyperbolic}, $\lob$ is well-defined and continuous on $\R$.  Thus, the definition of~$\mathcal V$ can be extended to the closure of~$\mathcal A(\tau)$, which may include points with coordinates equal to 0 or $\pi$ where the integral itself is improper. If $\theta\in\overline{\mathcal A(\tau)}$, then $\mathcal{V}(\theta)\ge0$.

Straightforward tetrahedra-by-tetrahedra calculations show that for a positive angle structure $\theta$ and a tangent vector $\vec v\in T_\theta\mathcal{A}(\tau)$,
\begin{equation}
    \frac{\partial\mathcal{V}}{\partial \vec v}=\displaystyle{\sum_{i=1}^{3n} -v_i\log \sin \theta_i} \quad\text{ and }\quad\frac{\partial^2\mathcal{V}}{\partial \vec v\, ^2}<0.\label{eq:derivatives}
\end{equation}
See, for example, \cite[Lemma 5.3]{AngledtoHyperbolic}.

Because $\mathcal{V}$ is strictly concave down on~$\mathcal A(\tau)$, any critical point in $\mathcal{A}(\tau)$ is unique and an absolute maximum.  In fact, the first and second partials~\eqref{eq:derivatives} are enough to prove the same for a maximal point on the boundary. 

\begin{prop}\label{prop:maxunique}
Whenever $\mathcal{A}(\tau)\ne\emptyset$, the point at which the volume functional $\mathcal V: \overline{\mathcal A(\tau)}\rightarrow[0,\infty)$ attains its maximum is unique.
\end{prop}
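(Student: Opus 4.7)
The plan is to argue by contradiction. Suppose $\alpha, \beta \in \overline{\mathcal{A}(\tau)}$ are distinct points both attaining $M := \max_{\overline{\mathcal{A}(\tau)}} \mathcal{V}$. Parametrize the segment joining them as $\gamma(t) = (1-t)\alpha + t\beta$ for $t \in [0,1]$; by convexity this segment lies in $\overline{\mathcal{A}(\tau)}$. Setting $\phi(t) = \mathcal{V}(\gamma(t))$ gives a continuous function on $[0,1]$ with $\phi(0) = \phi(1) = M$. I will derive a contradiction from the following one-dimensional fact: \emph{a function $\phi : [0,1] \to \mathbb{R}$ that is continuous on $[0,1]$ and strictly concave on $(0,1)$ has a unique maximizer on $[0,1]$}. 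Indeed, if $\phi(a)=\phi(b)=\max\phi$ with $a<b$, a chord-limit argument using strict concavity on $(0,1)$ forces $\phi \equiv \max\phi$ on $[a,b]$, contradicting strict concavity on any sub-interval $[a',b'] \subseteq (a,b) \cap (0,1)$ of positive length.

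So it suffices to verify $\phi$ is strictly concave on $(0,1)$, which I split into two cases. In the first, some $\gamma(t_0)$ with $t_0 \in (0,1)$ lies in the interior $\mathcal{A}(\tau)$; by convexity of the open set $\mathcal{A}(\tau)$, every $\gamma(t)$ for $t \in (0,1)$ is then in $\mathcal{A}(\tau)$, so $\phi$ is smooth on $(0,1)$ and $\phi''(t) = \partial^2 \mathcal{V}/\partial\vec v\,^2 < 0$ by~\eqref{eq:derivatives}, yielding strict concavity.

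Otherwise the entire open segment $\gamma((0,1))$ lies on $\partial \overline{\mathcal{A}(\tau)}$. Since each coordinate $\gamma(t)_i$ is linear in $t$ with values in $[0,\pi]$, any coordinate attaining $0$ or $\pi$ somewhere in $(0,1)$ must be constantly $0$ or $\pi$ on all of $[0,1]$; let $I_{\mathrm{bd}}$ collect these ``pinned'' indices. For $i \in I_{\mathrm{bd}}$, the velocity $v_i = \beta_i - \alpha_i$ vanishes and the pinned coordinate contributes $\lob(0) = \lob(\pi) = 0$ to $\phi$; for $i \notin I_{\mathrm{bd}}$, the coordinate $\gamma(t)_i$ lies in $(0,\pi)$ for every $t \in (0,1)$. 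Hence $\phi$ is $C^2$ on $(0,1)$ with second derivative equal to the sum appearing in~\eqref{eq:derivatives} restricted to the non-pinned indices.

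The main obstacle is confirming that this restricted sum remains strictly negative when the base point sits on a boundary face rather than in $\mathcal{A}(\tau)$. Fortunately, the argument of \cite[Lemma~5.3]{AngledtoHyperbolic} invokes only the linear tangent-space constraints (sum of angles per tetrahedron equals $\pi$ and sum around each edge equals $2\pi$), which $\vec v = \beta - \alpha$ inherits from $\alpha,\beta \in \overline{\mathcal{A}(\tau)}$, together with finiteness of $\cot(\gamma(t)_i)$ for the indices contributing to the sum; the latter holds precisely because those indices are $i \notin I_{\mathrm{bd}}$. Hence $\phi''(t) < 0$ for all $t \in (0,1)$, so $\phi$ is strictly concave on $(0,1)$, and the one-dimensional lemma above delivers $\alpha = \beta$, the required contradiction.
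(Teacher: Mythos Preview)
Your overall strategy and Case~1 are fine, but Case~2 has a genuine gap: the appeal to \cite[Lemma~5.3]{AngledtoHyperbolic} does not yield $\phi''(t)<0$ once some angles are pinned at~$0$. The tetrahedron-by-tetrahedron second derivative is
\[
-a^2\cot\alpha - b^2\cot\beta - c^2\cot\gamma \qquad (a+b+c=0),
\]
and for this to be strictly negative one needs all three angles in $(0,\pi)$. If a single angle is pinned at~$0$ (so its velocity vanishes), the other two are $\theta$ and $\pi-\theta$ with opposite velocities $\pm b$, and the contribution collapses to $-b^2\cot\theta - b^2\cot(\pi-\theta)=0$. Hence if $\alpha$ and $\beta$ differ only on tetrahedra of this type --- for instance $\alpha|_T=(0,0,\pi)$ and $\beta|_T=(0,\pi,0)$ --- then $\phi$ is \emph{constant} along the segment, and strict concavity fails. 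Your argument does not exclude this; in particular, nothing you wrote uses that $\alpha$ and $\beta$ are maximizers beyond $\phi(0)=\phi(1)=M$.

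This is precisely the obstacle the paper's proof is built around. It first proves Lemma~\ref{lem:00pi} (at a maximizer any degenerate tetrahedron must be flat, i.e., $(0,0,\pi)$, via the $+\infty$ one-sided derivative at a solitary zero angle), and then, in its Case~2, notes that if two maximizers assign different flat patterns to a tetrahedron, their midpoint assigns $(0,\tfrac{\pi}{2},\tfrac{\pi}{2})$ to it --- so the midpoint is maximal yet violates Lemma~\ref{lem:00pi}. The paper's own footnote in Case~3 even flags that ``in general, strict concavity does not extend to the boundary of $\mathcal{A}(\tau)$,'' which is exactly the hole in your Case~2. To repair your argument you would need to incorporate Lemma~\ref{lem:00pi} and then handle the zero-second-derivative tetrahedra separately, at which point you are essentially reproducing the paper's proof.
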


\begin{proof}
If there is a critical point $\alpha$ in $\mathcal A(\tau)$, the strict concavity of $\mathcal{V}$ guarantees that $\alpha$ is unique and that $\mathcal{V}(\alpha)$ is the maximum volume.  If, on the other hand, there is no critical point on the interior, the compactness of $\overline{\mathcal A(\tau)}$ ensures that there is a point $\alpha\in\overline{\mathcal A(\tau)}-\mathcal A(\tau)$ with $\mathcal{V}(\alpha)$ maximal.  To show such an angle structure is unique, we first examine its coordinates.  Because $\alpha\notin\mathcal A(\tau)$, there is at least one tetrahedron in $\tau$ with an assigned angle of $0$ or $\pi$.  The maximality of $\alpha$ further constrains the angles of such a tetrahedron as noted in the following lemma.

\begin{lem}\label{lem:00pi}
When $\mathcal{V}(\alpha)$ is a maximum, any tetrahedron assigned an angle of $0$ or $\pi$ must have angles of $0$, $0$, and $\pi$ in some order. 
\end{lem}

\begin{proof}[Proof of Lemma~\ref{lem:00pi}]
By Condition~(2) of Definition~\ref{def:anglestructure}, the angles in a tetrahedron must sum to~$\pi$, so this statement is obviously true if one angle in a tetrahedron is assigned~$\pi$ or if two angles are assigned $0$. We now argue, as also noted in \cite[Proposition~7.1]{GwithF2Bridge} and~\cite[Proposition~7]{DehnFillings}, that  assigning exactly one $0$ angle contradicts the maximality of~$\alpha$. 

Consider, for example, a tetrahedron, $T_0\in\tau$, with nonnegative angles $0$, $\theta_0$, and $\pi-\theta_0$. For any $0<\lambda<1$, define a family of tetrahedra $T(t)|_{t\ge0}$ with angles $\theta_1=t$, $\theta_2=\theta_0-\lambda t$, and $\theta_3=\pi-(t+\theta_0-\lambda t)$, so $T(0)=T_0$ and, as $t$ increases, the angles of the tetrahedron change --- $\theta_1$ is increased, while $\theta_2$ and $\theta_3$ are decreased.  The first partial in~\eqref{eq:derivatives} restricted to this family of tetrahedra yields
$$
\frac{dV(T(t))}{dt}\Big|_{t=0^+}=\displaystyle{\sum_{i=1}^{3} -\frac{d\theta_i}{dt} \log \sin \theta_i}\Big|_{t=0^+}=+\infty.
$$
In other words, increasing the solitary $0$ angle by a small amount would increase $T_0$'s volume (and thus the total volume $\mathcal{V}$) by a much larger amount, violating the maximality of~$\alpha$.  Thus, if a maximal angle structure assigns any angles in $\{0,\pi\}$ to a tetrahedron, two of its angles are assigned $0$ and one~$\pi$, thus proving the lemma.  We call a tetrahedron with angles in $\{0,\pi\}$ \emph{flat} and observe that its volume is~$0$.
\end{proof}

Returning to the proof of Proposition~\ref{prop:maxunique}, we will use the constraints Lemma~\ref{lem:00pi} imposes on the coordinates of a maximal point $\alpha\in\overline{\mathcal A(\tau)}-\mathcal A(\tau)$ to prove that $\alpha$ is unique. Let $\beta$ be in $\overline{\mathcal A(\tau)}-\mathcal A(\tau)$ with $\mathcal{V}(\beta)=\mathcal{V}(\alpha)$ and let $\mu\in[0,\pi]^{3n}$ be the point whose coordinates are determined by averaging those of $\alpha$ and~$\beta$. Then $\mu\in\overline{\mathcal A(\tau)}$, because the set $\overline{\mathcal A(\tau)}$ is convex.  We will examine the volume of each tetrahedron $T$ under the angle structure assigned by~$\mu$ and will denote this volume by $\mathcal{V}(\mu|_T)$.

\begin{description}
    \item [Case 1] If $\alpha$ and $\beta$ assign the same angles to a tetrahedron $T$, then their average $\mu$ does as well, so $\mathcal{V}(\mu|_T)$ is equal to $\mathcal{V}(\alpha|_T)$ (and also $\mathcal{V}(\beta|_T)$).
    
    \item[Case 2] If $\alpha$ and $\beta$ assign different angles to a tetrahedron $T$ and both $\alpha|_T$ and $\beta|_T$ are flat, then, because $\alpha$ and $\beta$ are maximum points, the angles must be $0$, $0$, and $\pi$ (in different orders), and the coordinates of $\mu|_T$ are $0$, $\frac\pi2$, and $\frac\pi2$ in some order.  In this case, $\mathcal{V}(\mu|_T)$ equals $0$ just as $\mathcal{V}(\alpha|_T)$ and $\mathcal{V}(\beta|_{T})$ do.
    
    \item[Case 3] If $\alpha$ and $\beta$ assign different angles to a tetrahedron~$T$ and at least one of $\alpha|_T$ or $\beta|_T$ is not flat, we will argue that $\mathcal{V}(\mu|_T)$ is strictly greater than the average of $\mathcal{V}(\alpha|_T)$ and $\mathcal{V}(\beta|_T)$.  Without loss of generality suppose $\alpha|_T$ is not flat.  Define $\gamma(t)$ to be the line segment joining $\alpha|_T$ and~$\beta|_T$:
    $$
    \gamma(t)=t\alpha|_T + (1-t)\beta|_T \text{ for } t\in[0,1].
    $$
    Let $V(t)$ be the volume of the tetrahedron whose angles are given by $\gamma(t)$.  As a restriction of $\mathcal{V}$, the function $V$ is continuous on $[0,1]$.  Also, because $\alpha|_T$ is not flat, $\gamma$ is in $(0,\pi)^3$ for $0<t\le 1$, so $V$ is differentiable on $(0,1]$ and by the second partial in~\eqref{eq:derivatives}, $V^{\prime\prime}<0$.  Applying the Mean Value Theorem to $V$ yields points $c_1\in\left(0,\frac12\right)$ and $c_2\in\left(\frac12,1\right)$ with   
    \begin{align*}
        V^\prime(c_1)&=2\left(V\left(\textstyle{\frac12}\right)-V\left(0\right)\right)\\
        V^\prime(c_2)&=2\left(V\left(1\right)-V\left(\textstyle{\frac12}\right)\right)
    \end{align*}
    and negative second derivatives, so $V^\prime(c_1)>V^\prime(c_2)$, which means 
    \begin{equation*}
        2V\left(\textstyle{{\frac12}}\right)>V(1)+V(0).\footnote{If neither $\alpha|_T$ nor $\beta|_T$ is flat, this inequality follows immediately from the strict concavity of $V$.  We appeal to the Mean Value Theorem, because, in general, strict concavity does not extend to the boundary of $\mathcal A(\tau)$.}
    \end{equation*}
    Because $V\left(\textstyle{\frac12}\right)=\mathcal{V}(\mu|_T)$, $V(1)=\mathcal{V}(\alpha|_T)$, and $V(0)=\mathcal{V}(\beta|_T)$, this inequality implies $\mathcal{V}(\mu|_T)>\frac12(\mathcal{V}(\alpha|_T)+\mathcal{V}(\beta|_T))$.  
\end{description}

In all cases, the volume of a tetrahedron $T$ computed using $\mu$, the average of the angle structures, is greater than or equal to the average of the volumes of $T$ computed using the angle structures $\alpha$ and $\beta$, so summing the individual volumes over all tetrahedra in $\tau$ yields
$$
\mathcal{V}(\mu)\ge\textstyle{\frac12}(\mathcal{V}(\alpha)+\mathcal{V}(\beta))=\mathcal{V}(\alpha)
$$
with equality only when there are no tetrahedra in Case~3.  But $\mathcal{V}(\alpha)$ is the maximum volume, so $\mathcal{V}(\mu)\le\mathcal{V}(\alpha)$, meaning $\mathcal{V}(\mu)=\mathcal{V}(\alpha)$ and there are no Case~3 tetrahedra.  The
only way $\alpha$ and $\beta$ can assign different angles to a tetrahedron~$T$ is if $\alpha|_T$ and $\beta|_T$ are both flat (Case 2).  In this case,  $\mathcal{V}(\mu)=\mathcal{V}(\alpha)$, so $\mu$ is also maximal, but, as we have seen, the coordinates of $\mu|_T$ are $0$, $\frac\pi2$, and $\frac\pi2$, which is not allowed for maximal angle structures.  Therefore, $\alpha$ and $\beta$ cannot assign different angles to any of the tetrahedra and $\beta=\alpha$, which is the unique maximum point.
\end{proof}

Positive angle structures that maximize the total volume play an important role in the Casson-Rivin program.

\subsection{The Casson-Rivin program} \label{sec:CR}

In their proof of Casson and Rivin's theorem, Futer and Gu\'eritaud  relate  the critical point of the volume (if it exists) and the complete hyperbolic structure (if it exists).  More specifically, they show that the derivative of the volume vanishes in every direction exactly when Thurston's system of gluing equations guaranteeing completeness (given in \cite{Thurston1}) is satisfied.  In doing so, they prove their main result, a theorem based on independent work of Rivin and Casson usually cited as~\cite{Rivin}.

\begin{CRtheorem}[{\cite[Theorem 1.2]{AngledtoHyperbolic}}]  Let $M$ be an orientable 3-manifold with boundary consisting of tori, and let $\tau$ be an ideal triangulation of~$M$.  Then a point $\theta\in\mathcal A(\tau)$ corresponds to a complete hyperbolic metric on the interior of $M$ if and only if $\theta$ is a critical point of the functional $\mathcal V: \mathcal A(\tau)\rightarrow[0,\infty)$.
\end{CRtheorem}

In our situation, the manifold $M$ is the complement of an open neighborhood of the braid closure, $L_p$.  We seek a complete hyperbolic metric on $X_{p}$, which is homeomorphic to the interior of~$M$ and is triangulated by $\tau_{p}$. 

The Casson-Rivin Theorem will enable us to conclude $\tau_{p}$ is geometric as follows.  In~\ref{sec:veering}, we establish  that the space of positive angle structures on our triangulations, $\mathcal{A}(\tau_{p})$, is nonempty.  Consequently, the volume functional $\mathcal{V}:\overline{\mathcal{A}(\tau_{p})}\rightarrow[0,\infty)$ will attain its maximum.  Then, in Section~\ref{sec:proofq=1}, we show that the maximum point of $\mathcal{V}$ must belong to $\mathcal{A}(\tau_{p})$.  Then, by the Casson-Rivin Theorem, there is a positive angle structure on our triangulation $\tau_{p}$ that yields the complete hyperbolic structure on the link complement, $X_{p}$.  Thus, the constructed triangulations are geometric and $C^2\sigma_1^p\sigma_2^{-1}$ is hyperbolic for $p\ge1$.

\subsection{Symmetries of angle structures} \label{sec:symmetries}

When arguing that the maximum of the volume cannot appear on the boundary of $\mathcal{A}(\tau_{p})$, a certain symmetry of the space of positive angle structures will be helpful.

\begin{defn}\label{def:anglespacesymmetry}
Any symmetry, $\rho$, of a triangulation, $\tau$, induces a map on the space of positive angle structures by assigning the angles of $T$ to $\rho(T)$ for all $T\in\tau$.  When such an assignment results in a positive angle structure for all points in $\mathcal A(\tau)$, we say \emph{$\rho$ induces a symmetry of the space of positive angle structures}, which we will also denote by~$\rho$.  More specifically, given a symmetry $\rho:\tau\rightarrow\tau$ with $\rho(T_i)=T_j$, let $\theta\in\mathcal A(\tau)$ and define the coordinates of $\rho\theta$ by $\rho\theta_{3j-2}=\theta_{3i-2}$, $\rho\theta_{3j-1}=\theta_{3i-1}$, and $\rho\theta_{3j}=\theta_{3i}$. 
The angles of $\rho\theta$ are positive and meet Conditions~(1) and~(2) of Definition~\ref{def:anglestructure}, so $\rho$ induces a symmetry of~$\mathcal A(\tau)$ whenever Condition~(3) is also met.

\end{defn}

\begin{observation}\label{obs:equalvolume}
A symmetry $\rho:\mathcal A(\tau)\rightarrow\mathcal A(\tau)$ rearranges the angles of $\theta$ triple by triple, so $\mathcal{V}(\theta)= \mathcal{V}(\rho\theta)$.
\end{observation}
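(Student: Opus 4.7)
The plan is to unwind the definitions and observe that volume is additive over tetrahedra and depends only on each tetrahedron's triple of angles. Writing $\mathcal V(\theta)=\sum_{i=1}^{n}V(T_i)$, it suffices to show that the multiset of tetrahedron-volumes $\{V(T_i)\}$ computed from $\theta$ equals the multiset computed from $\rho\theta$.

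First I would fix a tetrahedron $T_j\in\tau$ and its preimage $T_i=\rho^{-1}(T_j)$. By Definition~\ref{def:anglespacesymmetry}, the coordinates of $\rho\theta$ indexed by $T_j$ are exactly the coordinates of $\theta$ indexed by $T_i$, i.e.\ $(\rho\theta_{3j-2},\rho\theta_{3j-1},\rho\theta_{3j})=(\theta_{3i-2},\theta_{3i-1},\theta_{3i})$. Since the volume of an ideal hyperbolic tetrahedron depends only on its triple of dihedral angles through $V=\lob(\theta_1)+\lob(\theta_2)+\lob(\theta_3)$, it follows that the volume of $T_j$ under the angle assignment $\rho\theta$ equals the volume of $T_i$ under $\theta$.

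Next I would sum over the tetrahedra. Because $\rho$ is a bijection of $\tau$, the map $i\mapsto j=\rho(i)$ is a permutation of $\{1,\dots,n\}$, so
\[
\mathcal V(\rho\theta)=\sum_{j=1}^{n}V\bigl((\rho\theta)|_{T_j}\bigr)=\sum_{i=1}^{n}V\bigl(\theta|_{T_i}\bigr)=\mathcal V(\theta),
\]
which is the desired equality. Equivalently, at the level of the coordinate sum $\sum_{k=1}^{3n}\lob(\theta_k)$, the induced action of $\rho$ merely permutes the $3n$ summands, leaving the total unchanged.

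There is no real obstacle here: the observation is essentially a bookkeeping consequence of Definitions~\ref{def:volume} and~\ref{def:anglespacesymmetry} together with the fact that each tetrahedron's contribution to $\mathcal V$ is a symmetric function of its three angles. The only care required is to match the indexing convention in Definition~\ref{def:anglespacesymmetry} with the tetrahedron-by-tetrahedron decomposition of $\mathcal V$, which is transparent once $\rho$ is recognized as a permutation of the triples.
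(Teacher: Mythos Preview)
Your proof is correct and matches the paper's reasoning: the observation is stated without a separate proof in the paper, the justification being precisely that $\rho$ permutes the angle triples and $\mathcal V$ is a sum over those triples. You have simply made explicit the bookkeeping that the paper leaves implicit.
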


\section{The space of positive angle structures is nonempty}  \label{sec:anglestructuresnonempty}

We now return to the triangulations, $\tau_{p}$, defined in~\ref{sec:anyp}.  Our immediate goal is to complete the first step of the Casson-Rivin program by showing that $\mathcal A(\tau_{p})$ is nonempty, i.e., that $\tau_{p}$ admits a positive angle structure.  We will do so by showing that  $\tau_{p}$ has a veering angle structure, which can be deformed to a positive angle structure~\cite{HodgsonEtal}~\cite{veering}.  We then use the veering structure to describe an explicit coordinate system for $\mathcal A(\tau_{p})$ and conclude the section by writing the edge equations in these coordinates and using them to describe a useful symmetry.

\subsection{The triangulations are veering}\label{sec:veering}

While the initial triangulations $\hat{\tau}_p$ were not veering, the Pachner moves eliminated the obstructions and 
\begin{prop}\label{prop:veering}
Each triangulation $\tau_{p}$ is veering.
\end{prop}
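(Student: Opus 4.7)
The plan is to exhibit an explicit taut angle assignment on the tetrahedra of $\tau_{p}$ together with a compatible red/blue coloring of the manifold edges, and then to verify the two veering conditions of Definition~\ref{def:veering}.

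First, I would assign each tetrahedron a pair of $\pi$-angled opposite edges. In view of Figure~\ref{fig:anyp}, each $w_i$ and $w_i'$ has ideal vertices labelled by a subset of $\{0,1,2,5\}$, and the natural candidates for the $\pi$-pair are the ``diagonals'' suggested by the layered structure of the braid complement --- the opposite pair transverse to both the red axis $[w_0'(01)]$ and the two blue degree-$5$ classes of Observation~\ref{obs:veeringblue5}. I would pick the $\pi$-pair for $s$ by reading off Figure~\ref{fig:3-2move} so that $s$ meshes with its four $w$-neighbours, as specified by the last four pairings of Table~\ref{table:anyp}. The small cases $\tau_{1}$ and $\tau_{2}$, handled via the pairings in Figure~\ref{fig:2-0move} and Table~\ref{table:p=2}, serve as direct sanity checks of the proposed assignment before generalizing to all $p$.

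Next, I would verify Condition~(3) of Definition~\ref{def:anglestructure}, namely that the angles summed around each edge class equal $2\pi$ --- equivalently, that exactly two $\pi$-angles land on each of the $2p+1$ edge classes of $\tau_{p}$. Observations~\ref{obs:veeringred} and~\ref{obs:veeringblue5} already describe three of these classes (one red of degree $4p+4$ and two blue of degree $5$), and the remaining $2p-2$ classes must each have degree $4$, since the total degree across edges is $6(2p+1)$. The verification is a bookkeeping exercise that exploits the shift-symmetry among the $w_i$ and $w_i'$ for $0 < i < p$, reducing to a finite list of cases: a generic $w_i$ and $w_i'$, together with the boundary tetrahedra $w_0'$, $w_p$, and $s$.

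Finally, I would extend the coloring prescribed by Observations~\ref{obs:veeringred} and~\ref{obs:veeringblue5} to the remaining $2p-2$ degree-$4$ edge classes, and then check the veering conditions: within each tetrahedron the four $0$-angled edges split as exactly two red and two blue, and the counterclockwise cyclic order around each $\pi$-edge is blue then red. I expect this last step to be the main obstacle, because each new edge class meets four tetrahedra and has its color constrained at each of them, so global consistency is not automatic. I would either establish it by direct combinatorial inspection of Figure~\ref{fig:anyp} and Table~\ref{table:anyp}, or derive it from a global argument exploiting a transverse orientation on the faces (the ``transverse taut'' upgrade mentioned in Definition~\ref{def:veering}), which would also pay dividends in later sections.
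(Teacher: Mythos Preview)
Your plan is the paper's proof: assign $\pi$ to the diagonals of each flattened tetrahedron in Figure~\ref{fig:anyp} and $0$ elsewhere, check that every edge class carries exactly two diagonals (Observations~\ref{obs:veeringred}, \ref{obs:veeringblue5}, plus the degree-$4$ classes worked out explicitly), and then verify the coloring. The one simplification you are missing is that the coloring is not an obstacle at all: the paper colors \emph{only} the single class $[w_0'(01)]$ red and every other edge blue, so in each $w_i$, $w_i'$ the red $0$-edges are exactly $01$ and $25$ (and in $s$ they are $ad$ and $bc$), making the veering pattern of Figure~\ref{fig:veering} immediate by inspection; the transverse-taut upgrade then follows because every pairing in Table~\ref{table:anyp} sends a bottom face to a top face.
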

\begin{proof}
With the exception of $s$, the tetrahedra in $\tau_{p}$ have an upward orientation induced by the product region containing the 6-braid, which is how the non-$s$ tetrahedra of $\tau_{p}$ are flattened in Figure~\ref{fig:anyp}.   Flatten $s$ as indicated in the same figure.  In each tetrahedron, assign an angle of~$\pi$ to the diagonal edges and an angle of~0 to the others.  Use red to color the 01 and 25 edges of each $w_i$ and~$w_i^\prime$; the edges $s(bc)$ and $s(ad)$; and the top diagonal of $w_0^\prime$ along with the bottom diagonal of~$w_p$.  (These are exactly the edges colored red in Figure~\ref{fig:anyp}.)  Color the remaining edges blue.  We claim this coloring forms a veering angle structure on~$\tau_{p}$, which is also transverse taut.  

By Observation~\ref{obs:veeringred}, the edges in the equivalence class $[w_0^\prime(01)]$ are exactly the edges colored red.  Thus, the blue edges compose the remaining equivalence classes, and the colorings are consistent with the face pairings that form~$M$. 

Next we show that the assignment of angles forms a taut angle structure.  The angle assignments themselves guarantee  Conditions~(1) and~(2) of  Definition~\ref{def:anglestructure} as well as the requirement that taut angle structures have angles in $\{0,\pi\}$.  Only diagonals are assigned an angle of~$\pi$, so, if there are exactly two diagonals in each equivalence class, Condition~(3) will also be met (the angle sum around each edge is~$2\pi$).  

The class $[w_0^\prime(01)]$, whose members are listed in Observation~\ref{obs:veeringred}, contains exactly two diagonals,~$w_0^\prime(02)$ and~$w_p(02)$.  The degree~5 edges of $X_{p}$ are completely described in Observation~\ref{obs:veeringblue5}.  The edges $[w_0^\prime(05)]$ and $[w_0^\prime(12)]$ each contain exactly two diagonals:  The top diagonal $w_1(15)$ is matched with the bottom diagonal $w_{p-1}^\prime(51)$  and similarly for $w_1^\prime(02)$ and $w_{p-1}(20)$ (or $s(db)$ and $s(ac)$ when $p=1$, which completes this case). 

To check the remaining $2p-2$ edges of~$X_{p}$ for $p>1$, i.e., those of degree~4, we examine $w_{i-1}(02)$ and $w_{i-1}^\prime(15)$, the bottom diagonals of $w_{i-1}$ and $w_{i-1}^\prime$ in the braid complement in Figure~\ref{fig:anyp}.  The edge $w_{i-1}(02)$ bounds both $w_{i-1}(012)$ and $w_{i-1}(025)$.  After passing through the $\sigma_1$~half-twist, the first of these faces is rotated to the back and identified with $w_{i}^\prime(102)$, whereas the second stays in front and is identified with $w_{i}(125)$.  These faces share the edge 12, which, after another $\sigma_1$ is twisted to the back and identified to $w_{i+1}^\prime(02)$.  A similar argument applies to~$w_{i-1}^\prime(15)$.  Symbolically,
\begin{align} 
\begin{split}\label{eq:deg4}
w_{i-1}(02)\sim w_{i}(12)= w_{i}^\prime(12)\sim w_{i+1}^\prime(02)\quad  &\text{ for } i=2, \dots p-2.\\
w_{i-1}^\prime(15)\sim w_{i}(05)=w_{i}^\prime(05)\sim w_{i+1}(15)\quad	&\text{ for } i=1, \dots p-1.
\end{split}
\end{align}
The only diagonals are the 02 and 15 edges, so the edge classes, $[w_i(12)]$ and $[w_i(05)]$, contain exactly two~$\pi$ angles as required.  The remaining  degree-4 edges contain the diagonals of~$s$, $s(ac)$ and $s(bd)$.  The identifications in Table~\ref{table:anyp} show that their equivalence classes contain exactly one other diagonal:
\begin{align} 
\begin{split}\label{eq:s}
s(ac)	\sim w_1^\prime(21) &= w_1(21) 	\sim w_2^\prime(20) \\ 
s(bd)	\sim w_{p-1}^\prime(21) &= w_{p-1}(21) 	\sim w_{p-2}(20).
\end{split}
\end{align}

With exactly two diagonals in each equivalence class, the angle structure is taut.  Each tetrahedron appears as in Figure~\ref{fig:veering}, and the gluings of Table~\ref{table:anyp} identify bottom faces to top faces, so the \lq\lq upward'' orientations are consistent, forming a transverse taut angle structure, which is also layered.
\end{proof}

\begin{cor}\label{cor:positiveanglestructure}
The triangulation $\tau_{p}$ admits a positive angle structure.
\end{cor}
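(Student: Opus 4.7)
The plan is to invoke directly the cited theorem that every veering triangulation admits a strict angle structure. Proposition~\ref{prop:veering} has just established that $\tau_p$ carries a (transverse taut) veering angle structure --- this was the substantive work, requiring one to verify the coloring of edges by Observations~\ref{obs:veeringred} and~\ref{obs:veeringblue5}, the two-$\pi$-per-edge-class condition, and the counterclockwise color pattern at each vertex. With that in hand, the corollary should require essentially no new geometric input.

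Concretely, I would write a single short paragraph: cite Proposition~\ref{prop:veering} to get a veering (in fact transverse taut) angle structure $\theta_0$ on $\tau_p$, then appeal to the theorem of Hodgson--Rubinstein--Segerman--Tillmann~\cite{HodgsonEtal} (also recorded in~\cite{veering}) asserting that any veering triangulation of an oriented cusped $3$-manifold admits a strict angle structure. That theorem provides a smooth path of taut-like assignments from $\theta_0$ into the interior of $(0,\pi)^{3n}$ while preserving Conditions~(1)--(3) of Definition~\ref{def:anglestructure}, so the endpoint lies in $\mathcal{A}(\tau_p)$. Hence $\mathcal{A}(\tau_p)\ne\emptyset$.

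There is essentially no obstacle at this stage: the corollary is a direct application of the external theorem. The only thing I would double-check is that our formulation of ``veering'' (taken from \cite{HodgsonEtal} via Definition~\ref{def:veering}) matches the hypotheses of the strict-angle-structure theorem as stated in the same reference, so that the citation is clean. If one preferred a self-contained proof, one could instead construct a perturbation of $\theta_0$ by hand --- replacing each $(\pi,0,0)$ triple in a tetrahedron by $(\pi-2\epsilon_i,\epsilon_i,\epsilon_i)$ and solving the resulting linear system on the $\epsilon_i$'s imposed by the edge equations --- but since the paper explicitly signals the appeal to~\cite{HodgsonEtal}, the cleanest write-up is the one-line citation above.
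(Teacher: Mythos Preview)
Your proposal is correct and matches the paper's own proof essentially verbatim: the paper simply cites Proposition~\ref{prop:veering} together with \cite[Theorem~1.5]{HodgsonEtal} (and the constructive version \cite[Theorem~1.3]{veering}) to conclude that the veering triangulation $\tau_p$ admits a positive (strict) angle structure. No additional argument is needed.
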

\begin{proof}
In their main result, \cite[Theorem 1.5]{HodgsonEtal}, Hodgson, Rubinstein, Segerman, and Tillmann prove that veering triangulations admit positive angle structures (which they call strict angle structures).  A constructive proof showing how to deform a veering angle structure to a positive angle structure has also been given by Futer and Gu\'eritaud~\cite[Theorem 1.3]{veering}.
\end{proof}

\subsection{Coordinates for the space of positive angle  structures}\label{sec:coordinates}

The veering structure on~$\tau_{p}$ allows us to introduce convenient coordinates for a point $\theta$ in $\mathcal A(\tau_{p})$.  For $T$ in~$\tau_{p}$, let~$\theta_D T$ denote the angle assigned to the diagonals of~$T$, $\theta_R T$ the angle assigned to the red edges, and $\theta_B T$ to the blue edges.  Thus, we can write $\theta\in\mathcal{A}(\tau_{p})\subset(0,\pi)^{3(2p+1)}$ as
\begin{multline*}
(\theta_R w_0^\prime,
\theta_B w_0^\prime,
\theta_D w_0^\prime,\dots,
\theta_R w_{p-1}^\prime,
\theta_B w_{p-1}^\prime,
\theta_D w_{p-i}^\prime,\\
\theta_R w_1,
\theta_B w_1,
\theta_D w_1,\dots,
\theta_R w_p,
\theta_B w_p,
\theta_D w_p,
\theta_R s,
\theta_B s,
\theta_D s).
\end{multline*}
\begin{observation}\label{obs:edgeeqs}
Using this notation, the edge identifications in Observations~\ref{obs:veeringred} and~\ref{obs:veeringblue5} and Equations~\eqref{eq:deg4} and~\eqref{eq:s} together with Condition~(3) of Definition~\ref{def:anglestructure} yield the following \emph{edge equations}:
\begin{align*}
\left(\sum_{i=0}^{p-1} 2\theta_R w_i^\prime\right) +\left(\sum_{i=1}^{p} 2\theta_R w_i\right) +2\theta_R s + \theta_Dw_0^\prime+\theta_Dw_p&=2\pi\\
\theta_B w_0^\prime+\theta_D w_1+\theta_B s+\theta_D w_{p-1}^\prime+\theta_B w_p&=2\pi\\
\theta_B w_0^\prime+\theta_D w_1^\prime+\theta_B s+\theta_D  w_{p-1}+\theta_B w_p&=2\pi\\
\theta_D w_{i-1}+\theta_B w_{i}+\theta_B w_{i}^\prime+\theta_D  w_{i+1}^\prime&=2\pi&\text{ for } i=2, \dots p-2\\
\theta_D w_{i-1}^\prime+\theta_B w_{i}+\theta_B w_{i}^\prime+\theta_D  w_{i+1}&=2\pi&\text{ for } i=1, \dots p-1\\
\theta_D s+\theta_B w_1^\prime+\theta_B w_1+\theta_D  w_2^\prime&=2\pi\\
\theta_D s+\theta_B w_{p-1}^\prime+\theta_B w_{p-1}+\theta_D  w_{p-2}&=2\pi.
\end{align*}
These equations hold for $p>1$ as long as each term is defined.  (Recall that there are no tetrahedra labelled $w_0$ or $w_p^\prime$.)  

The first two equations also hold when $p=1$, and, after substituting $s(db)$ and~$s(ac)$ for the undefined terms $w_1^\prime(02)$ and $w_{p-1}(20)$ as in Observation~\ref{obs:veeringblue5}, so does the third:
$$
\theta_B w_0^\prime+\theta_D s+\theta_B s+\theta_D  s+\theta_B w_1=2\pi.
$$
\end{observation}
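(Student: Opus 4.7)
The plan is to verify each of the listed edge equations by summing the angles around the corresponding edge equivalence class and applying Condition~(3) of Definition~\ref{def:anglestructure}. The equivalence classes themselves have already been enumerated: Observations~\ref{obs:veeringred} and~\ref{obs:veeringblue5} describe the degree-$(4p+4)$ red class and the two degree-$5$ classes, while Equations~\eqref{eq:deg4} and~\eqref{eq:s} describe the remaining degree-$4$ classes. To translate each class into an algebraic equation I use the veering coloring from Proposition~\ref{prop:veering}: every edge of a tetrahedron $T$ carries angle $\theta_R T$, $\theta_B T$, or $\theta_D T$ according to its color, and by Condition~(1) of Definition~\ref{def:anglestructure} an opposite-edge pair that lies entirely in a single class contributes a factor of two.

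For the first equation I sum around $[w_0^\prime(01)]$: the red pair $\{01,25\}$ in each $w_i$ and $w_i^\prime$ contributes $2\theta_R w_i$ or $2\theta_R w_i^\prime$; the red pair $\{s(bc),s(ad)\}$ contributes $2\theta_R s$; and the two diagonals $w_0^\prime(02)$ and $w_p(02)$ contribute $\theta_D w_0^\prime + \theta_D w_p$. For each degree-$5$ class, the five-edge chain in Observation~\ref{obs:veeringblue5} consists of two blue top/bottom edges and two diagonals in $w$-tetrahedra, together with one blue edge of $s$ (namely $s(cd)$ or $s(ab)$), yielding the second and third equations. For the remaining classes of degree $4$, each four-edge chain in Equations~\eqref{eq:deg4} and~\eqref{eq:s} visits exactly two blue edges and two diagonals, producing the fourth through seventh equations.

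The special case $p=1$ affects only the third equation: Observation~\ref{obs:veeringblue5} replaces the undefined terms $w_1^\prime(02)$ and $w_{p-1}(20)$ by the two diagonals $s(db)$ and $s(ac)$ of $s$, contributing $2\theta_D s$; combined with the blue edge $s(ba)$ and the two blue contributions $\theta_B w_0^\prime$ and $\theta_B w_1$, this gives the displayed formula. The only real obstacle is careful bookkeeping: one must track the color of every edge in each cycle and avoid double-counting of opposite-edge pairs within a tetrahedron. Since the hard work of enumerating the equivalence classes is complete, no further geometric argument is required.
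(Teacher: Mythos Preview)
Your proposal is correct and is precisely the verification the paper intends: the observation is stated without a separate proof, since it is simply the transcription of each previously enumerated edge class into an angle-sum equation via Condition~(3), using the veering coloring of Proposition~\ref{prop:veering} to read off whether each edge contributes $\theta_R$, $\theta_B$, or $\theta_D$. Your edge-by-edge bookkeeping matches the paper's implicit derivation exactly, including the $p=1$ modification of the third equation.
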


\begin{observation}\label{obs:diagonaleqs}
Several pairs of  edge equations above share angles coming from the blue edges.  These angles will cancel when one equation is subtracted from the other, yielding the following equalities for the diagonals:
\begin{align*}
    \theta_D w_1-\theta_D w_1^\prime
    =\theta_D w_3-\theta_D w_3^\prime
    =\theta_D w_5-\theta_D w_5^\prime
    =&\cdots
    \\
\theta_D w_2-\theta_D w_2^\prime
    =\theta_D w_4-\theta_D w_4^\prime
    =\theta_D w_6-\theta_D w_6^\prime
    =&\cdots.
\end{align*}
In addition,
\begin{align*}
      \theta_D w_{p-2}-\theta_D w_{p-2}^\prime
    &=\theta_D w_p-\theta_D s
      \\
       \theta_D s-\theta_D w_0^\prime
    &=\theta_D w_2-\theta_D w_2^\prime
    \\
 \theta_D w_{p-1}-\theta_D w_{p-1}^\prime
    &=\theta_D w_1-\theta_D w_1^\prime.
\end{align*}
These equations hold when $p>1$ and each term is defined.
The final equation, which is derived by taking the difference between the equations for the degree-5 edges implies that, if $p$ is odd (so $p-1$
 is even), then all of the differences listed above are equal.  If $p$ is even, all differences, $\theta_D w_j-\theta_D w_j^\prime$, with $j$ even are also equal to $\theta_D s-\theta_D w_0^\prime$ and $\theta_D w_p-\theta_D s$, but not necessarily to $\theta_D w_1-\theta_D w_1^\prime$, etc.

\end{observation}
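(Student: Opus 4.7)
The plan is to derive each equation in the observation by pairing two edge equations from Observation~\ref{obs:edgeeqs} whose blue-angle terms coincide, then subtracting one from the other. Since each such equation equals $2\pi$, the subtraction cancels every common summand and leaves only differences of diagonal angles.

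First, for each $i$ with $2 \le i \le p-2$, the two degree-$4$ edge equations indexed by $i$ share exactly the summands $\theta_B w_i$ and $\theta_B w_i^\prime$. Subtracting one from the other gives
\[
\theta_D w_{i-1} - \theta_D w_{i-1}^\prime \;=\; \theta_D w_{i+1} - \theta_D w_{i+1}^\prime,
\]
and iterating in $i$ produces the two announced chains — one over odd subscripts and one over even. Next I would handle the three boundary identities. Pairing the degree-$4$ equation containing $\theta_D s$ and $\theta_D w_2^\prime$ with the degree-$4$ equation at $i=1$ cancels $\theta_B w_1$ and $\theta_B w_1^\prime$ and yields $\theta_D s - \theta_D w_0^\prime = \theta_D w_2 - \theta_D w_2^\prime$; a symmetric pairing at the top of the braid yields $\theta_D w_{p-2} - \theta_D w_{p-2}^\prime = \theta_D w_p - \theta_D s$. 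Finally, subtracting the two degree-$5$ edge equations cancels $\theta_B w_0^\prime$, $\theta_B s$, and $\theta_B w_p$ and leaves $\theta_D w_{p-1} - \theta_D w_{p-1}^\prime = \theta_D w_1 - \theta_D w_1^\prime$.

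The parity claim then follows by inspecting which chain each difference belongs to. The odd-subscript chain collects $\theta_D w_j - \theta_D w_j^\prime$ for odd $j$; the even-subscript chain collects those for even $j$, together with $\theta_D s - \theta_D w_0^\prime$ and $\theta_D w_p - \theta_D s$ via the boundary identities just derived. When $p$ is odd, $p-1$ is even, so the degree-$5$ identity bridges the two chains and forces all the differences to coincide. When $p$ is even, $p-1$ is odd, so that identity merely reasserts a relation within the odd chain, and the two chains can remain distinct.

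I expect no serious obstacle here; the entire argument is bookkeeping with the equations of Observation~\ref{obs:edgeeqs}. The only care needed is to track which terms are defined in the small-$p$ cases — for instance, when $p=3$ the range $2\le i \le p-2$ is empty and only the three boundary identities contribute, and for $p=2$ several equations become degenerate — and to verify that each pairing really does share its blue-angle content, which is immediate from the form of the equations.
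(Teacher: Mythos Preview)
Your proposal is correct and follows exactly the approach the paper intends: the observation in the paper is stated with its justification built in (``these angles will cancel when one equation is subtracted from the other''), and you have simply carried out that subtraction explicitly, pairing the two families of degree-$4$ equations at each index~$i$, pairing each $s$-equation with the appropriate endpoint degree-$4$ equation, and differencing the two degree-$5$ equations. One small imprecision: your sentence placing both $\theta_D s-\theta_D w_0^\prime$ and $\theta_D w_p-\theta_D s$ in the even chain is only literally true when $p$ is even, since the latter attaches via index $p-2$; but your subsequent parity analysis handles both cases correctly, so this does not affect the argument.
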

The corresponding equation for $p=1$ is:
$$
      \theta_D s-\theta_D w_0^\prime
    =\theta_D w_1-\theta_D s.
$$

\subsection{A symmetry of the space of angle structures}\label{sec:symmetry}

Consider the closure of $C^2\sigma_1^p\sigma_2^{-1}$ as shown on the right of Figure~\ref{fig:6braid}.  Moving the lone half-twist up so that it occurs halfway along the $\sigma_1$ half-twists reveals an order 2 symmetry of $X_p$ --- rotate about a horizontal line through the lone half-twist.  This involution induces a symmetry on the triangulation $\tau_p$ --- rotate the tetrahedra in Figure~\ref{fig:anyp} about a horizontal line and take $s$ to itself.  This symmetry will respect  angle structures:

\begin{prop} \label{prop:symmetry}
An involution $\iota:\tau_{p}\rightarrow\tau_{p}$ that fixes $s$ and takes $w_i^\prime$ to $w_{p-i}$ by matching up their $R$, $B$, and $D$ edges induces a symmetry of $\mathcal A(\tau_{p})$.
\end{prop}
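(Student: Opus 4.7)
The plan is to verify directly that the angle assignment $\iota\theta$, defined by Definition~\ref{def:anglespacesymmetry}, is again a positive angle structure on $\tau_{p}$, using the edge equations from Observation~\ref{obs:edgeeqs}. Because $\iota$ matches the $R$, $B$, and $D$ edges of each $w_i^\prime$ with those of $w_{p-i}$ and fixes $s$, applying $\iota$ to $\theta$ simply relocates each tetrahedron's angle triple $(\theta_R T, \theta_B T, \theta_D T)$ to $\iota(T)$ without permuting within the triple. Hence positivity together with Conditions~(1) and~(2) of Definition~\ref{def:anglestructure} is immediate; only Condition~(3), encoded by the edge equations, requires substantive checking.

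On the angle variables, $\iota$ acts by $\theta_\square w_i^\prime \mapsto \theta_\square w_{p-i}$, $\theta_\square w_j \mapsto \theta_\square w_{p-j}^\prime$, and $\theta_\square s \mapsto \theta_\square s$, for each $\square \in \{R,B,D\}$. I would then inspect, equation by equation, that the list in Observation~\ref{obs:edgeeqs} is invariant as a set under this substitution combined with the reindexing $i \mapsto p-i$. A short calculation shows: the first (degree-$(4p+4)$) equation is fixed after reindexing its two sums by $j=p-i$; each of the two degree-5 equations is $\iota$-invariant (the $w_0^\prime$ and $w_p$ terms swap in pairs, as do the $w_1$ (or $w_1^\prime$) and $w_{p-1}^\prime$ (or $w_{p-1}$) terms); the two families of degree-4 equations are each carried to themselves under $i \mapsto p-i$, with primed and unprimed roles exchanging; and the two $s$-equations are interchanged. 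The $p=1$ case, whose modified third edge equation is recorded at the end of Observation~\ref{obs:edgeeqs}, is handled analogously. Since every edge equation remains valid for $\iota\theta$, we conclude $\iota\theta \in \mathcal{A}(\tau_{p})$, so $\iota$ induces a symmetry of the space of positive angle structures as claimed.

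The main obstacle is the bookkeeping: tracking how primed and unprimed labels get exchanged by $\iota$ and verifying that the reindexing $i \mapsto p-i$ places each image equation correctly within the list. No deeper conceptual difficulty arises; the symmetry is already built into the way the edge equations were organized in Observation~\ref{obs:edgeeqs}, which itself reflects the geometric rotation symmetry of $X_p$ described in the paragraph preceding the proposition.
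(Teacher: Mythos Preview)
Your proposal is correct and follows essentially the same approach as the paper's proof: both verify Condition~(3) of Definition~\ref{def:anglestructure} by checking that the substitution $\iota\theta_\star w_i^\prime=\theta_\star w_{p-i}$, $\iota\theta_\star w_i=\theta_\star w_{p-i}^\prime$, $\iota\theta_\star s=\theta_\star s$ permutes the edge equations of Observation~\ref{obs:edgeeqs} among themselves. The paper spells out the red-edge equation and one degree-4 example explicitly, while you summarize the check more tersely, but the content is the same.
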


\begin{proof}
Let $\theta\in\mathcal A(\tau_{p})$.  According to Definition~\ref{def:anglespacesymmetry}, to verify that $\iota\theta$ is also in $\mathcal A(\tau_{p})$, we need to check that the sum of the dihedral angles at each edge of of~$X_{p}$ is~$2\pi$, when the angle measure is given by~$\iota\theta$.  Thus, it is enough to verify that $\iota\theta$ satisfies the edge equations listed in Observation~\ref{obs:edgeeqs} where
$$
\iota\theta_\star s=\theta_\star s, \quad
\iota\theta_\star w_i^\prime=\theta_\star w_{p-i},\quad \text{and}\quad 
\iota\theta_\star w_i=\theta_\star w_{p-i}^\prime
$$
for $\star=R, B$, and $D$.

The first equation describes the angle sum around the red edge of $X_{p}$, which consists of all $R$ edges and the red diagonals of $w_0^\prime$ and~$w_p$.  The involution $\iota$ permutes these edges, thereby permuting the terms of the sum:
\begin{align*}
\left(\sum_{i=0}^{p-1} 2\iota\theta_R w_i^\prime\right) +\left(\sum_{i=1}^{p} 2\iota\theta_R w_i\right) +2\iota\theta_R s + \iota\theta_Dw_0^\prime+\iota\theta_Dw_p&=\\
\left(\sum_{i=1}^{p} 2\theta_R w_i\right) +\left(\sum_{i=0}^{p-1} 2\theta_R w_i^\prime\right) +2\theta_R s + \theta_Dw_p+\theta_Dw_0^\prime&=2\pi,\\
\end{align*}
so the first edge equation is satisfied by $\iota\theta$.  The terms in the next two sums, those for the degree-5 edges, are also permuted by~$\iota$, so those equations are satisfied by $\iota\theta$ as well.  

Something a little different happens with the degree-4 edges (only present when $p>1$).  To confirm the final equation of Observation~\ref{obs:edgeeqs}, for example, we need to examine
$$
\iota\theta_D s+\iota\theta_B w_{p-1}^\prime+\iota\theta_B w_{p-1}+\iota\theta_Dw_{p-2},
$$
but this sum is just
$$
\theta_D s+\theta_B w_1+\theta_B w_1^\prime+\theta_D  w_2^\prime,
$$
which equals $2\pi$ by the penultimate equation in Observation~\ref{obs:edgeeqs}.  In this instance, the involution takes the terms in one sum to the terms in another, effectively permuting the edge equations themselves.  The same happens for the remaining degree-4 equations.  
\end{proof}

\begin{cor}\label{cor:symmetryedgeeqs}
Let $\iota:\tau_{p}\rightarrow\tau_{p}$ be as in Proposition~\ref{prop:symmetry} and let $k$ be such that $p=2k$ if $p$ is even and $p=2k+1$ if $p$ is odd.  Positive angle structures $\theta\in\mathcal A(\tau_{p})$ with the property that $\iota\theta=\theta$ have $w_i^\prime\cong_\theta w_{p-i}$ and thus satisfy a simpler list of edge equations: 
\begin{align}
\left(\sum_{i=1}^{p} 4\theta_R w_i\right) +2\theta_R s + 2\theta_Dw_p&=2\pi\label{eq:deg4p+4}\\
2\theta_B w_p+2\theta_D w_1+\theta_B s&=2\pi\label{eq:deg5a}\\
2\theta_B w_p+2\theta_D w_{p-1}+\theta_B s&=2\pi\label{eq:deg5b}\\
\theta_D w_{i-1}+\theta_B w_{i}+\theta_B w_{p-i}+\theta_D  w_{p-(i+1)}&=2\pi&\text{ for } i=2, \dots k\label{eq:deg4a}\\
\theta_D w_{p-(i-1)}+\theta_B w_{i}+\theta_B w_{p-i}+\theta_D  w_{i+1}&=2\pi&\text{ for } i=1, \dots k\label{eq:deg4b}\\
\theta_D s+\theta_B w_{p-1}+\theta_B w_1+\theta_D  w_{p-2}&=2\pi.\label{eq:deg4c}
\end{align}
In addition,
\begin{align}
 \theta_D w_p=\theta_D s\quad\text{ and }\quad\theta_D w_i=\theta_D w_{p-i} \quad\text{ for } i=1, \dots k.\label{eq:diagonals}
\end{align}
These equations hold for all $p\ge1$, with one exception.  When $p=1$, Equation~\eqref{eq:deg5b} reads:
$
2\theta_B w_1+2\theta_D s+\theta_B s=2\pi.
$
\end{cor}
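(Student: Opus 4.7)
The plan is essentially mechanical substitution and reindexing. By Definition~\ref{def:anglespacesymmetry} and Proposition~\ref{prop:symmetry}, the hypothesis $\iota\theta = \theta$ translates directly into
$$\theta_\star w_i^\prime = \theta_\star w_{p-i}, \qquad \theta_\star w_0^\prime = \theta_\star w_p, \qquad \theta_\star s = \theta_\star s$$
for $\star\in\{R,B,D\}$ and all valid $i$; this is precisely the statement $w_i^\prime\cong_\theta w_{p-i}$. So the first task is to rewrite each of the seven edge equations in Observation~\ref{obs:edgeeqs} under these substitutions.

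For the red-edge equation, reindexing the sum over primed $w_i$ by $j=p-i$ doubles the $\theta_R w_j$ contribution and collapses $\theta_D w_0^\prime+\theta_D w_p$ into $2\theta_D w_p$, producing~\eqref{eq:deg4p+4}. Each of the two degree-5 equations similarly doubles the $w_0^\prime$-term and the primed-$w$ term to yield~\eqref{eq:deg5a} and~\eqref{eq:deg5b}. In the two degree-4 families, substitution directly produces~\eqref{eq:deg4a} and~\eqref{eq:deg4b}, and the pairing $i\leftrightarrow p-i$ identifies the equation at index $i$ with the one at index $p-i$, collapsing the ranges to $i=2,\dots,k$ and $i=1,\dots,k$. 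The two $s$-equations coalesce into the single equation~\eqref{eq:deg4c} because $\theta_B w_1^\prime=\theta_B w_{p-1}$ and $\theta_B w_{p-1}^\prime=\theta_B w_1$.

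For the diagonal identities~\eqref{eq:diagonals}, I would apply the same substitutions to Observation~\ref{obs:diagonaleqs}: the odd-indexed chain of differences becomes a chain of the form $\theta_D w_j-\theta_D w_{p-j}$, and because this chain contains both $\theta_D w_1-\theta_D w_{p-1}$ and $\theta_D w_{p-1}-\theta_D w_1$ its common value must be $0$; the even-indexed chain similarly contains $\theta_D s-\theta_D w_p$ alongside $\theta_D w_p-\theta_D s$, forcing every even difference and $\theta_D s-\theta_D w_p$ to vanish as well. Together these yield $\theta_D w_i=\theta_D w_{p-i}$ for $i=1,\dots,k$ and $\theta_D w_p=\theta_D s$. (A convenient shortcut to $\theta_D w_1=\theta_D w_{p-1}$ is to subtract~\eqref{eq:deg5b} from~\eqref{eq:deg5a}.)

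The $p=1$ modification comes from incorporating Observation~\ref{obs:veeringblue5}: the third degree-5 equation reads $\theta_B w_0^\prime+\theta_D s+\theta_B s+\theta_D s+\theta_B w_1=2\pi$, which under $\iota$ becomes $2\theta_B w_1+2\theta_D s+\theta_B s=2\pi$. The main obstacle is purely bookkeeping --- matching each original equation to a unique representative in the reduced list and confirming that the diagonal chains collapse for every parity of $p$, notably when $p$ is even and $k$ is odd so the even-index chain has no fixed point yet still contains an equation and its negation.
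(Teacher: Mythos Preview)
Your proposal is correct and follows essentially the same approach as the paper: both arguments derive $w_i^\prime\cong_\theta w_{p-i}$ from $\iota\theta=\theta$, then substitute $\theta_\star w_i^\prime\mapsto\theta_\star w_{p-i}$ into Observation~\ref{obs:edgeeqs} to obtain \eqref{eq:deg4p+4}--\eqref{eq:deg4c}, and into Observation~\ref{obs:diagonaleqs} to obtain~\eqref{eq:diagonals}. The only presentational difference is in~\eqref{eq:diagonals}: the paper first extracts $\theta_D w_1=\theta_D w_{p-1}$ by subtracting \eqref{eq:deg5a} and \eqref{eq:deg5b} (your parenthetical shortcut) and then splits on the parity of~$p$, whereas your ``chain contains its own negative'' framing is the same computation repackaged---just note that for odd~$p$ the two chains merge via the last equation of Observation~\ref{obs:diagonaleqs}, so the even chain inherits $\theta_D w_p-\theta_D s$ through the odd chain rather than directly.
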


\begin{proof}
By assumption,
$\iota\theta_\star w_i^\prime=\theta_\star w_i^\prime$ for $\star=R, B$, and $D$, and, by definition, $\iota\theta_\star w_i^\prime$ also equals $\theta_\star w_{p-i}$, so $\theta_\star w_i^\prime=\theta_\star w_{p-i}$, and the tetrahedra $w_i^\prime$ and $w_{p-i}$ have the same angle assignments under~$\theta$, so $w_i^\prime\cong_\theta w_{p-i}$.  This isometry allows us to obtain Equations~\eqref{eq:deg4p+4}--\eqref{eq:deg4c}, by replacing $w_i^\prime$ with $w_{p-i}$ in the edge equations of Observation~\ref{obs:edgeeqs} and eliminating redundancies.  

To obtain Equation~\eqref{eq:diagonals} for $p=1$, replace $w_0^\prime$ with $w_1$ in the $p=1$ equation of Observation~\ref{obs:diagonaleqs}, which shows 
$$
\theta_D s-\theta_D w_1
    =\theta_D w_1-\theta_D s,
$$
so $\theta_D w_1=\theta_D s$.  
If $p>1$, Equations~\eqref{eq:deg5a} and~\eqref{eq:deg5b} imply that $\theta_D w_1$ equals $\theta_D w_{p-1}$, which, because of the isometry, equals $\theta_D w_1^\prime$, so the difference $\theta_D w_1-\theta_D w_1^\prime$ equals~0 as do all odd-index differences listed in Observation~\ref{obs:diagonaleqs}. Replacing $w_i^\prime$ with $w_{p-i}$, yields Equation~\eqref{eq:diagonals} for odd~$i$.  The remaining equations are derived differently depending on the parity of~$p$.  As noted in Observation~\ref{obs:diagonaleqs}, when $p$ is odd, all differences are equal (in this case, equal to~0), so, after replacing $w_i^\prime$ with $w_{p-i}$, Equation~\eqref{eq:diagonals} holds for all $i$ when $p$ is odd. To see that Equation~\eqref{eq:diagonals} also holds when $p$ and $i$ are even, note that
$$
\theta_D w_p-\theta_D s
   =\theta_D w_2-\theta_D w_2^\prime
    =\theta_D w_{p-2}-\theta_D w_{p-2}^\prime
    =\theta_D s-\theta_D w_0^\prime
    =\theta_D s-\theta_D w_p
$$
so $\theta_D s=\theta_D w_p$. Thus, all differences in Observation~\ref{obs:diagonaleqs} with even index are also~0 and  replacing  $w_i^\prime$ with $w_{p-i}$ results in Equation~\eqref{eq:diagonals}.
\end{proof}

\section{The maximal volume occurs at a positive angle structure}\label{sec:proofq=1}

Corollary~\ref{cor:positiveanglestructure} shows that the space of positive angle structures on each triangulation~$\tau_{p}$ is nonempty, so we have accomplished the first step of the Casson-Rivin program described by  Futer and Gu\'eritaud in~\cite{AngledtoHyperbolic} and summarized in~\ref{sec:CR}.  In this section, we complete the program, proving the main result.

\begin{thm}\label{thm:q=1}
Let $L_p$ be the closure of the 3-braid $C^2\sigma_1^p\sigma_2^{-1}$ and $X_{p}$ be its complement in the 3-sphere.  Then there is an ideal triangulation of $X_{p}$ that is geometric.
\end{thm}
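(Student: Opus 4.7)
The plan is to apply the Casson-Rivin Theorem to the triangulation $\tau_{p}$. By Corollary~\ref{cor:positiveanglestructure}, the space $\mathcal{A}(\tau_{p})$ is nonempty, so the volume functional $\mathcal{V}:\overline{\mathcal{A}(\tau_{p})}\to[0,\infty)$ is defined on a nonempty compact convex set and therefore attains its maximum at a point $\alpha\in\overline{\mathcal{A}(\tau_{p})}$. By Proposition~\ref{prop:maxunique}, $\alpha$ is unique. If we can show $\alpha\in\mathcal{A}(\tau_{p})$, then $\alpha$ is a critical point of $\mathcal{V}$ restricted to the interior, and the Casson-Rivin Theorem yields a complete hyperbolic metric on $X_{p}$ realized by $\tau_{p}$, proving the triangulation is geometric.

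The only nontrivial step is therefore to rule out $\alpha\in\overline{\mathcal{A}(\tau_{p})}-\mathcal{A}(\tau_{p})$. I would begin by exploiting the symmetry $\iota$ of Proposition~\ref{prop:symmetry}: since $\iota$ preserves volumes (Observation~\ref{obs:equalvolume}), $\iota\alpha$ is also a maximum, so by uniqueness $\iota\alpha=\alpha$. This means $\alpha$ satisfies the simpler system of edge equations~\eqref{eq:deg4p+4}--\eqref{eq:diagonals} from Corollary~\ref{cor:symmetryedgeeqs}, and in particular $w_i^\prime\cong_\alpha w_{p-i}$. Assuming $\alpha$ is a boundary point, Lemma~\ref{lem:00pi} forces every flat tetrahedron in $\tau_{p}$ to have angles that are a permutation of $(0,0,\pi)$; in the $R,B,D$ coordinates of Section~\ref{sec:coordinates}, exactly one of $\theta_R T,\theta_B T,\theta_D T$ equals $\pi$ while the other two vanish.

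The main obstacle is then a careful combinatorial case analysis using the reduced edge equations to show that no configuration of flat tetrahedra is consistent with positivity of the remaining angles. A natural starting point is the red equation~\eqref{eq:deg4p+4}: since it has total sum $2\pi$ and already contains $2p+2$ nonnegative terms with coefficients $4$, $4$, $\dots$, $4$, $2$, $2$, any choice $\theta_R w_i = \pi$ or $\theta_R s = \pi$ immediately overshoots, so the only flat tetrahedra that contribute $\pi$ to a red edge must be $w_p$ (via its diagonal) or $w_0^\prime$ (via its diagonal, which is also red); and, more restrictively, turning on $\theta_D w_p=\pi$ forces every other term in~\eqref{eq:deg4p+4} to vanish. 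Similar sharp constraints come from the two degree-5 equations~\eqref{eq:deg5a} and~\eqref{eq:deg5b}, which each have five nonnegative terms summing to $2\pi$, and from the degree-4 equations~\eqref{eq:deg4a}--\eqref{eq:deg4c}, which each have exactly four terms summing to~$2\pi$ and so can accommodate at most two $\pi$'s, in specific positions.

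I would then propagate these constraints: if some $w_i$ is flat with $\theta_D w_i=\pi$, Equation~\eqref{eq:diagonals} forces $\theta_D w_{p-i}=\pi$ as well, and then~\eqref{eq:deg4a} and~\eqref{eq:deg4b} applied to adjacent indices force neighboring diagonals to be $0$, which via the degree-4 equations pushes $\pi$'s into the blue slots and cascades through the chain of $w$-layers. Tracking this cascade against Equations~\eqref{eq:deg5a}, \eqref{eq:deg5b}, \eqref{eq:deg4c} (which also constrain $s$) and against the sum restriction of~\eqref{eq:deg4p+4} should, in each possible starting case --- flatness originating at $s$, at $w_p$, at some interior $w_i$, or at $w_0^\prime$ --- produce either a term larger than $\pi$ somewhere on the left-hand side of an equation or a summand forced to be strictly negative. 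Either outcome contradicts $\alpha\in\overline{\mathcal{A}(\tau_{p})}$. The small cases $p=1,2$ would be handled separately since their edge equations are slightly degenerate. Once every possibility is eliminated, $\alpha\in\mathcal{A}(\tau_{p})$ and the Casson-Rivin Theorem finishes the proof.
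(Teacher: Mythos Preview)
Your overall architecture matches the paper: use Corollary~\ref{cor:positiveanglestructure} to get $\mathcal{A}(\tau_p)\ne\emptyset$, take the maximum $\alpha$, invoke uniqueness together with the involution $\iota$ to force $\iota\alpha=\alpha$, and then analyze a putative boundary maximizer via Lemma~\ref{lem:00pi} and the reduced edge equations of Corollary~\ref{cor:symmetryedgeeqs}. That part is fine.

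The gap is in the endgame. You aim to show that a boundary point satisfying these constraints cannot exist, by pushing the equations until some angle is forced negative or above~$\pi$. That will not happen: the veering taut structure of Proposition~\ref{prop:veering} is itself a point of $\overline{\mathcal{A}(\tau_p)}-\mathcal{A}(\tau_p)$ in which every tetrahedron is flat and all the edge equations are satisfied exactly. So there is no contradiction to membership in $\overline{\mathcal{A}(\tau_p)}$ to be found. (A small symptom of this: you say $\theta_R s=\pi$ ``overshoots'' Equation~\eqref{eq:deg4p+4}, but its coefficient is $2$, so it lands exactly on $2\pi$ once all other terms vanish.)

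What the paper does instead --- and what your cascade argument is actually well-suited to prove --- is that once any single tetrahedron is flat, the equations force \emph{every} tetrahedron to be flat. The conclusion is then $\mathcal{V}(\beta)=0$ for any boundary point $\beta$ with Properties~\ref{property:1} and~\ref{property:2} (this is Proposition~\ref{prop:volume=0}). Since $\mathcal{A}(\tau_p)$ contains points of strictly positive volume, $\alpha$ cannot lie on the boundary, and Casson--Rivin finishes. So keep your propagation analysis, but redirect its target: do not look for an inconsistency in the equations; instead show that flatness spreads to all of $\tau_p$, and conclude via $\mathcal{V}=0$ versus maximality.
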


\begin{proof}

Let $\tau_{p}$ be the triangulation defined in~\ref{sec:anyp}. By  Corollary~\ref{cor:positiveanglestructure}, $\mathcal{A}(\tau_{p})\ne\emptyset$, which means the volume functional $\mathcal{V}$ attains its maximum on the compact set $\overline{\mathcal{A}(\tau_{p})}$, say at the point~$\alpha$.  A maximal point has two important properties.  The first was proved in Lemma~\ref{lem:00pi}.

\begin{property}\label{property:1}
If $\alpha$ is maximal and assigns an angle in $\{0,\pi\}$ to a tetrahedron, then the tetrahedron must be flat, i.e., its angles are $0$, $0$, $\pi$ in some order.  
\end{property}

The second important property derives from a symmetry of the coordinates of a maximal point.  Let the involution $\iota:\tau_{p}\rightarrow\tau_{p}$ be as in Proposition~\ref{prop:symmetry}, so $\iota$ fixes~$s$ and takes $w_i^\prime$ to $w_{p-i}$.  Then $\iota$ induces a symmetry on $\overline{\mathcal{A}(\tau_{p})}$, which rearranges the coordinates of an angle structure tetrahedron by tetrahedron, so, just as in Observation~\ref{obs:equalvolume}, $\mathcal{V}(\iota\alpha)=\mathcal{V}(\alpha)$.  Thus, if $\alpha$ is maximal, so is $\iota\alpha$, but  maximal points are unique by Proposition~\ref{prop:maxunique}, so $\iota\alpha=\alpha$, and  Corollary~\ref{cor:symmetryedgeeqs} together with Definition~\ref{def:anglestructure} guarantee the following.

\begin{property}\label{property:2}
If $\alpha$ is maximal, then the angles assigned by $\alpha$ belong to $[0,\pi]$; sum to~$\pi$ within a tetrahedron; and are determined by the angles assigned to $w_1, w_2, \dots, w_p$ and $s$, which must satisfy Equations~\eqref{eq:deg4p+4}--\eqref{eq:diagonals}.
\end{property}

Our goal is to use these properties to show that the maximal point, $\alpha$, is in~$\mathcal{A}(\tau_{p})$, not its boundary.  Then we can apply the Casson-Rivin Theorem from~\ref{sec:CR} \cite[Theorem 1.2]{AngledtoHyperbolic} and conclude that $\alpha$ corresponds to the complete hyperbolic structure on $X_{p}$, thus proving Theorem~\ref{thm:q=1}.  We will show that $\alpha$ is in $\mathcal{A}(\tau_{p})$ by showing that points in $\mathcal{B}=\overline{\mathcal{A}(\tau_{p})}-\mathcal{A}(\tau_{p})$ will never maximize the volume.  We do so by proving the following.

\begin{prop}\label{prop:volume=0}
Any $\beta\in\mathcal{B}$ satisfying Properties~\ref{property:1} and~\ref{property:2} must have $\mathcal{V}(\beta)=0$.
\end{prop}

\begin{proof}[Proof of Proposition~\ref{prop:volume=0}]

Let $\beta\in\mathcal{B}$ satisfy both Property~\ref{property:1} and Property~\ref{property:2}.  Because $\beta$ is in the boundary of the space of positive angle structures, there is a tetrahedron in $\tau_{p}$ to which $\beta$ assigns  angles from the set $\{0, \pi\}$.  By Property~\ref{property:1}, this tetrahedron is flat (has angles $0$, $0$, and $\pi$).  Using Property~\ref{property:2}, we can conclude that $\beta$ must assign~$\pi$ to at least one of the edges of $w_1, w_2, \dots w_p$ or $s$.  We now explore which edges can have such assignments and determine the resulting volume.

Observe that, because all angle assignments are nonnegative, assigning an angle measure of~$\pi$ to any $R$ edge of a $w$ tetrahedron violates Equation~\eqref{eq:deg4p+4} of Corollary~\ref{cor:symmetryedgeeqs} and thus Property~\ref{property:2}.  Therefore, no $\beta_R w_i$ can equal $\pi$.  We will use this result often, so we record it as a lemma and follow with another useful lemma.

\begin{lem}\label{lem:noRsarepi}
If $\beta\in\mathcal{B}$ satisfies both Property~\ref{property:1} and~\ref{property:2}, $\beta_R w_i$ cannot equal $\pi$ for $i=1,\dots,p$.\qed
\end{lem}

\begin{lem}\label{lem:wporsflat}
If $\beta\in\mathcal{B}$ satisfies both Property~\ref{property:1} and~\ref{property:2}, and either $w_p$ or $s$ is flat, then all tetrahedra are flat, so $\mathcal{V}(\beta)=0$.
\end{lem}

\begin{proof}[Proof of Lemma~\ref{lem:wporsflat}]

If a tetrahedron is flat, one of its edges is assigned an angle of~$\pi$.  By Lemma~\ref{lem:noRsarepi}, $\beta_R w_p$ cannot equal~$\pi$.  However, $\beta_R s$ could equal~$\pi$, and, if this is the case, Equation~\eqref{eq:deg4p+4} forces $\beta_R w_i$ to equal~$0$ for all $i=1,\dots,p$, so, by Property~\ref{property:1}, the non-$s$ tetrahedra are also flat.  Because, $\beta_D w_p$ also appears in Equation~\eqref{eq:deg4p+4}, a similar argument shows that if $\beta_D w_p=\pi$, all tetrahedra are flat.  By Equation~\eqref{eq:diagonals}, $\beta_D w_p=\beta_D s$, so the same holds if $\beta_D s=\pi$.  Thus, it only remains to check what happens if the angle at a $B$ edge of $w_p$ or $s$ equals $\pi$.
\begin{itemize}
    \item If $\beta_B w_p=\pi$, Equation~\eqref{eq:deg5a} implies that $\beta_B s=0$, so, by Property~\ref{property:1}, there is a $\pi$ angle at either the $R$ or $D$ edge of $s$, and, in either case, as observed above, all tetrahedra are flat.  
    \item If $\beta_B s=\pi$, then $\beta_D s=0$, so $\beta_D w_p$ also equals~$0$ (Equation~\eqref{eq:diagonals}), and, by Property~\ref{property:1}, either $\beta_R w_p$ or $\beta_B w_p$ equals~$\pi$.  Because of Lemma~\ref{lem:noRsarepi}, $\beta_B w_p$  must be~$\pi$, so, by the previous case, all tetrahedra are flat.
\end{itemize}
Therefore, if either $w_p$ or $s$  has an  assigned angle of~$\pi$, $\mathcal{V}(\beta)=0$.
\end{proof}

Returning to the proof of Proposition~\ref{prop:volume=0}, recall that $\beta\in\mathcal{B}$ must assign~$\pi$ to at least one of the edges of $w_1, w_2, \dots w_p$ or~$s$.  Lemmas~\ref{lem:noRsarepi} and~\ref{lem:wporsflat} cover the $R$ edges and the tetrahedra $w_p$ and $s$, so the only edges left to consider are the $B$ and $D$ edges of $w_1, w_2, \dots w_{p-1}$.  We start with the $B$ edges and first examine the case when $\beta_B w_1=\pi$.

If $\beta_B w_1=\pi$, then $\beta_D w_1$ equals~$0$ and, by Equation~\eqref{eq:diagonals}$_{i=1}$, so does $\beta_D w_{p-1}$.  Property~\ref{property:1} implies that in $w_{p-1}$ one of the other angles must be $\pi$, but, because of  Lemma~\ref{lem:noRsarepi}, it cannot be the angle at the $R$ edge. Consequently, $\beta_B w_{p-1}$ equals~$\pi$ and Equation~\eqref{eq:deg4c} implies
$\beta_D s$ must equal~$0$, so $s$ is flat and, by Lemma~\ref{lem:wporsflat}, $\mathcal{V}(\beta)=0$.  

Having shown that if $\beta$ assigns the angle $\pi$ to a $B$ edge of $w_1$, then $\mathcal{V}(\beta)=0$, we now consider the $B$ edges of $w_j$ with $1<j\le k$.  By repeatedly applying Equations~\eqref{eq:diagonals} and~\eqref{eq:deg4a} together with Property~\ref{property:1} and Lemma~\ref{lem:noRsarepi}, we will push the flatness of $w_j$ all the way down to $w_1$, allowing us to once again conclude $\mathcal{V}(\beta)=0$.  In particular, if $\beta_B w_j=\pi$, then $\beta_D w_j$ equals~$0$ and, by Equation~\eqref{eq:diagonals}$_{i=j}$, so does $\beta_D w_{p-j}$.  There is a $\pi$ angle in $w_{p-j}$ (Property~\ref{property:1}) and it cannot occur at the $R$ edge (Lemma~\ref{lem:noRsarepi}), so $\beta_B w_{p-j}$, will equal~$\pi$.  With both $\beta_B w_j$ and $\beta_B w_{p-j}$ equal to $\pi$, Equation~\eqref{eq:deg4a}$_{i=j}$ implies that $\beta_D w_{j-1}=0$, so, by  Equation~\eqref{eq:diagonals}$_{i=j-1}$, $\beta_D w_{p-(j-1)}=0$ also.  Another application of Property~\ref{property:1} and Lemma~\ref{lem:noRsarepi} shows both $\beta_B w_{j-1}$ and $\beta_B w_{p-(j-1)}$ are equal to $\pi$ and we can apply Equation~\eqref{eq:deg4a}$_{i=j-1}$ to conclude $\beta_D w_{j-2}=0$.  Continuing in this manner and eventually applying Equation~\eqref{eq:deg4a}$_{i=2}$ when both $\beta_B w_2$ and $\beta_B w_{p-2}$ are equal to $\pi$, allows us to conclude that $\beta_D w_1=0$.  Thus (by Property~\ref{property:1} and Lemma~\ref{lem:noRsarepi}), $\beta_B w_1=\pi$, so $\mathcal{V}(\beta)=0$.

It remains to examine the $B$ edges of $w_j$ where $j>k$.  If $\beta_B w_j$ equals $\pi$, then $\beta_D w_j=0$, so applying Equation~\eqref{eq:diagonals}$_{i=p-j}$, Property~\ref{property:1}, and Lemma~\ref{lem:noRsarepi} shows that $\beta_B w_{p-j}=\pi$.  Because $p-j\le k$, this case has already been covered.  Therefore, if any of the $B$ edges of $w_1, w_2, \dots w_{p-1}$ have angle equal to~$\pi$, then $\mathcal{V}(\beta)=0$.

The proof for the $D$ edges is a little simpler, because there is no need to appeal to Lemma~\ref{lem:noRsarepi}. If $\beta_D w_1=\pi$, then, by Equation~\eqref{eq:diagonals}$_{i=1}$, $\beta_D w_{p-1}=\pi$, so both $\beta_B w_1$ and $\beta_B w_{p-1}$ are equal to~$0$. In this situation, applying Equation~\eqref{eq:deg4c} yields $\beta_D s=\pi$,
so $s$ is flat and, by Lemma~\ref{lem:wporsflat}, $\mathcal{V}(\beta)=0$.  If $j\le k$ and $\beta_D w_j$ equals $\pi$, then, by Equation~\eqref{eq:diagonals}$_{i=j}$, so does $\beta_D w_{p-j}$, and both $\beta_B(w_j)$ and $\beta_B(w_{p-j})$ equal~$0$, so Equation~\eqref{eq:deg4a}$_i$ implies $\beta_D w_{j-1}=\pi$.  Continuing in this manner shows that if $\beta_D w_j$ equals~$\pi$, so does $\beta_D w_1$, and thus $\mathcal{V}(\beta)=0$.  If $j> k$ and $\beta_D w_j=\pi$, then, by Equation~\eqref{eq:diagonals}$_{i=p-j}$, $\beta_D w_{p-j}=\pi$ with $p-j\le k$, a previous case.  
\end{proof}

Any point maximizing the volume functional must satisfy Properties~\ref{property:1} and~\ref{property:2}.  By proving Proposition~\ref{prop:volume=0}, we have shown that any point, $\beta$, on the boundary of $\mathcal{A}(\tau_{p})$ satisfying these properties has $\mathcal{V}(\beta)=0$.  Therefore, the maximal point $\alpha$ must be on the interior of $\mathcal A(\tau_{p})$, and, by the Casson-Rivin Theorem (\ref{sec:CR} and \cite[Theorem 1.2]{AngledtoHyperbolic}), $\alpha$ corresponds to the complete hyperbolic structure on~$X_{p}$.  Thus, the triangulation $\tau_{p}$ is geometric, which concludes the proof of Theorem~\ref{thm:q=1}.
\end{proof}

\begin{cor}\label{cor:notconjugate}
In the braid group, $C^2\sigma_1^p\sigma_2^{-1}$ is not conjugate to $\sigma_1^{p_0}\sigma_2^{q_0}$ for integers $p_0$ and $q_0$.
\end{cor}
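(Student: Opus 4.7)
The plan is to argue by contradiction, leveraging the fact that Theorem~\ref{thm:q=1} has just established that the closure $L_p$ of $C^2\sigma_1^p\sigma_2^{-1}$ is a hyperbolic link. The key background fact, noted in the introduction, is that the braid closure of $\sigma_1^{p_0}\sigma_2^{q_0}$ is always an unknot, a torus knot, or a connected sum of torus knots, none of which is hyperbolic.

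Suppose for contradiction that $C^2\sigma_1^p\sigma_2^{-1}$ is conjugate in $B_3$ to $\sigma_1^{p_0}\sigma_2^{q_0}$ for some integers $p_0$ and $q_0$. Conjugate elements of the braid group have isotopic closures in $S^3$, so the links $L_p$ and $L_{\sigma_1^{p_0}\sigma_2^{q_0}}$ would be equivalent, and in particular their complements would be homeomorphic. On one hand, Theorem~\ref{thm:q=1} exhibits a complete hyperbolic structure on $X_p = S^3 - L_p$ via the geometric triangulation $\tau_p$. On the other hand, the complement of a torus knot, unknot, or connected sum of torus knots contains an essential torus or annulus (or is Seifert fibered) and therefore admits no complete hyperbolic structure of finite volume. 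This is the desired contradiction.

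The only step that requires any care is citing the correct topological fact about closures of $\sigma_1^{p_0}\sigma_2^{q_0}$; this is already stated (with the relevant reference to Futer--Kalfagianni--Purcell \cite{FareyManifolds}) in the discussion following Figure~\ref{fig:B3gen}. Once one knows these closures are not hyperbolic, the corollary is immediate from Theorem~\ref{thm:q=1} together with the Markov-type observation that conjugate braids yield isotopic closures. I expect no genuine obstacle here; the corollary is essentially a bookkeeping consequence of the main theorem, and the write-up should amount to at most a short paragraph.
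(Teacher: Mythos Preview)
Your proposal is correct and follows the same approach as the paper: the paper's proof simply notes that closures of $\sigma_1^{p_0}\sigma_2^{q_0}$ are not hyperbolic, while Theorem~\ref{thm:q=1} shows the closure of $C^2\sigma_1^p\sigma_2^{-1}$ is. Your write-up supplies a bit more detail (conjugate braids have isotopic closures, and why such closures fail to be hyperbolic), but the argument is identical in substance.
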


\begin{proof}
Closures of braids of the form $\sigma_1^{p_0}\sigma_2^{q_0}$ are not hyperbolic, but Theorem~\ref{thm:q=1} shows that the closure of $C^2\sigma_1^p\sigma_2^{-1}$ is.
\end{proof}

\begin{cor}\label{cor:pretzelstoo}
The complements of the $(-2,3,n)$-pretzel knots and links admit geometric triangulations for $n\ge7$.
\end{cor}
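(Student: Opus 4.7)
The plan is to derive this corollary directly from Theorem~\ref{thm:q=1}. The key intermediate step is a diagrammatic identification: for each $p\ge 1$, the braid closure $L_p$ of $C^2\sigma_1^p\sigma_2^{-1}$ is isotopic in $S^3$ to the $(-2,3,p+6)$-pretzel knot or link (knot when $p$ is odd, two-component link when $p$ is even). Setting $n=p+6$, the range $p\ge 1$ corresponds precisely to $n\ge 7$.

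I would approach the identification in three steps. First, draw the standard closure of $C^2\sigma_1^p\sigma_2^{-1}$ and exploit the fact that $C=(\sigma_1\sigma_2)^3$ is central to position one factor of $C$ at the top of the braid and slide the second factor past $\sigma_1^p$ to sit adjacent to $\sigma_2^{-1}$. Second, redistribute the crossings of the two $C$-factors using the braid relation $\sigma_1\sigma_2\sigma_1=\sigma_2\sigma_1\sigma_2$ and planar isotopies of the closure so that the diagram acquires a pretzel-style projection with three distinct twist regions arranged around a central horizontal axis. Third, count crossings and verify signs in each region: one region collects $\sigma_2^{-1}$ together with a single negative contribution extracted from a $C$-factor to produce a tangle with $-2$ crossings; another region accumulates three positive crossings from the remaining part of the rearranged $C^2$; the third absorbs the full $\sigma_1^p$ together with the bulk of $C^2$ to yield $p+6$ positive crossings.

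Once the homeomorphism $S^3-L_p\cong S^3-P_{-2,3,p+6}$ is established, the corollary is immediate. Theorem~\ref{thm:q=1} provides a geometric triangulation $\tau_p$ of $X_p=S^3-L_p$. Pulling $\tau_p$ back through the homeomorphism yields a triangulation of the $(-2,3,p+6)$-pretzel complement whose tetrahedra carry the same hyperbolic shapes and whose face-pairings are the same hyperbolic isometries, so the resulting structure is still complete and geometric. Since $p$ ranges over $\mathbb{Z}_{\ge 1}$, we obtain geometric triangulations for all $(-2,3,n)$-pretzel complements with $n\ge 7$.

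The main obstacle is the diagrammatic bookkeeping in the identification step: tracking signs and strand-crossings carefully enough to verify that the parameters really are $(-2,3,p+6)$ and not some reordering or sign variant. I would handle this with a single carefully-annotated figure showing the sequence of isotopies, rather than a symbolic manipulation in the braid group, since the pretzel presentation does not arise from a single clean algebraic rewriting. No additional hyperbolic-geometry input is needed; once the link isotopy is in hand, the corollary is a corollary in the literal sense.
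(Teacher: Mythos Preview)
Your overall strategy matches the paper's: identify $L_p$ with the $(-2,3,p+6)$-pretzel (or its mirror) and then invoke Theorem~\ref{thm:q=1}. The execution, however, differs in two noteworthy ways.

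First, you explicitly opt for a purely diagrammatic isotopy, asserting that ``the pretzel presentation does not arise from a single clean algebraic rewriting.'' In fact it does, and this is exactly what the paper uses: a short chain of conjugacies and applications of the braid relation shows
\[
C^2\sigma_1^p\sigma_2^{-1}\ \sim\ \sigma_1^3\sigma_2\sigma_1^{p+6}\sigma_2,
\]
and the closure of the right-hand word is visibly a three-tangle pretzel. This algebraic route is both shorter and less error-prone than the picture-by-picture bookkeeping you propose; your concern about tracking signs and strand-crossings through a multi-step isotopy is precisely what the algebraic computation sidesteps.

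Second, there is a mirror subtlety you gloss over. In the paper's convention $\sigma_1,\sigma_2$ are \emph{left-handed}, so the algebraic identity above realizes $L_p$ as the $(2,-3,-(p+6))$-pretzel, not the $(-2,3,p+6)$-pretzel. The two complements are homeomorphic via a reflection, and the paper notes this explicitly before concluding. Your crossing-count sketch (``three positive crossings'', ``$p+6$ positive crossings'') has the signs reversed relative to the paper's conventions, which is exactly the kind of slip you flagged as a risk. So while your plan is sound, be aware that the direct target is the mirror pretzel, and a one-line reflection argument is still needed at the end.
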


\begin{proof}
 Recall that the braid group has the single relation $\sigma_1\sigma_2\sigma_1 = \sigma_2\sigma_1\sigma_2 $ and that $C$, which is this element's square, is central.
 Conjugate words in the braid group yield equivalent braid closures, so, using the relation and the centrality of~$C$, $L_p$, the closure of $C^2\sigma_1^p\sigma_2^{-1}$ with $p\ge1$, is also the closure of 
\begin{align*}
    (\sigma_2\sigma_1\sigma_2)(\sigma_1\sigma_2\sigma_1)C\sigma_1^p\sigma_2^{-1} &\sim \sigma_2\sigma_1\sigma_2 C\sigma_1^{p+2}\\
    &=\sigma_1\sigma_2C\sigma_1^{p+3}\\
    &\sim\sigma_2(\sigma_1\sigma_2\sigma_1)(\sigma_1\sigma_2\sigma_1)         \sigma_1^{p+4}\\
    &=\sigma_1\sigma_2\sigma_1\sigma_1\sigma_1\sigma_2\sigma_1^{p+5}\\
    &\sim\sigma_1^3\sigma_2\sigma_1^{p+6}\sigma_2;
\end{align*}
here $\sim$ denotes the equivalence relation of conjugacy.  

As indicated in Figure~\ref{fig:pretzel}, the closure of $\sigma_1^3\sigma_2\sigma_1^{p+6}\sigma_2$ is a pretzel. Using our definitions, $\sigma_1$ and  $\sigma_2$ generate left-handed (negative) twists, so $L_p$ is a pretzel with $-(p+6)$, $2$, and $-3$ half-twists, or equivalently  --- after rolling the $p+6$ half-twists to the right --- $L_p$ is the $(2,-3, -(p+6))$-pretzel, and $X_p$ is its complement. By Theorem~\ref{thm:q=1}, $X_p$ admits a geometric triangulation.  But the complement of the $(-2,3,p+6)$-pretzel is homeomorphic (via a reflection) to $X_p$, so it also has a geometric triangulation.
\end{proof}
\begin{figure}[ht]\vskip.25in
\begin{overpic}[unit=.25in]{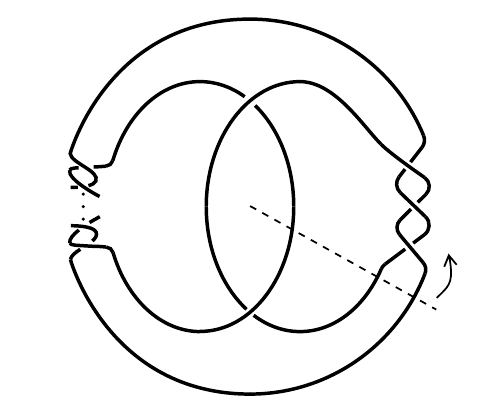}
\put(-.5,3.2){\small{$p+6$}}
\put(.7,3.2){$\Bigg\{$}
\end{overpic}
\caption{The $(-(p+6),2,-3)$-pretzel is also the closure of the braid $\sigma_1^3\sigma_2\sigma_1^{p+6}\sigma_2$.  Start at the dashed line and read the braid counterclockwise (strands numbered from the outside in).\label{fig:pretzel}}
\end{figure}

The braid relation $\sigma_1\sigma_2\sigma_1 = \sigma_2\sigma_1\sigma_2 $ and the centrality of $C=(\sigma_1\sigma_2)^{3}$ can also be used to show that $L_p$ is equivalent to the braid closure of $\sigma_1^{p+1}(\sigma_1\sigma_2)^{5}$:
$$
C^2\sigma_1^p\sigma_2^{-1}=\sigma_1^p(\sigma_1\sigma_2)^{6}\sigma_2^{-1}=\sigma_1^p(\sigma_1\sigma_2)^{5}\sigma_1\sim \sigma_1^{p+1}(\sigma_1\sigma_2)^{5}.
$$
In this form, $L_p$ is a T-link as defined by Birman and Kofman in~\cite{TLinks}, where they show that T-links are in one-to-one correspondence with Lorenz links.
\begin{cor}\label{cor:Tlinkstoo}
The complements of the T-links/Lorenz links formed as closures of the braids $\sigma_1^{p+1}(\sigma_1\sigma_2)^{5}$ admit  geometric triangulations for $p>1$. \qed
\end{cor}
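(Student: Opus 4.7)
The proof is immediate from Theorem~\ref{thm:q=1} together with the braid-group calculation displayed in the paragraph preceding the corollary. The plan is simply to observe that the manipulation
$$
C^2\sigma_1^p\sigma_2^{-1}=\sigma_1^p(\sigma_1\sigma_2)^{6}\sigma_2^{-1}=\sigma_1^p(\sigma_1\sigma_2)^{5}\sigma_1\sim \sigma_1^{p+1}(\sigma_1\sigma_2)^{5}
$$
takes place in $B_3$ and its final step uses conjugacy. Since conjugate words produce equivalent (isotopic) braid closures, the closure of $\sigma_1^{p+1}(\sigma_1\sigma_2)^{5}$ is isotopic to $L_p$, the closure of $C^2\sigma_1^p\sigma_2^{-1}$.

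Consequently, the complement in $S^3$ of this T-link/Lorenz link is homeomorphic to $X_p=S^3-L_p$. By Theorem~\ref{thm:q=1}, the triangulation $\tau_p$ constructed in Section~\ref{sec:anyp} is a geometric ideal triangulation of $X_p$ for every $p\ge 1$. Restricting to $p>1$ (which is the range in which the T-link/Lorenz interpretation is typically stated) therefore yields a geometric triangulation of each such complement, completing the proof.

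There is no real obstacle here: the work has already been done, first in the braid-group calculation that identifies the link, and second in the main theorem. The corollary is a packaging statement that re-interprets Theorem~\ref{thm:q=1} in the T-link/Lorenz-link vocabulary of Birman--Kofman \cite{TLinks}. If one wanted to be completely explicit, one could briefly justify each step of the displayed equation by citing the relation $\sigma_1\sigma_2\sigma_1=\sigma_2\sigma_1\sigma_2$ and the centrality of $C=(\sigma_1\sigma_2)^3$, but both are already invoked in the proof of Corollary~\ref{cor:pretzelstoo} just above.
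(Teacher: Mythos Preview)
Your proposal is correct and matches the paper's approach exactly: the corollary is stated with a \qed and no explicit proof, since the braid-group calculation in the preceding paragraph already identifies the closure of $\sigma_1^{p+1}(\sigma_1\sigma_2)^5$ with $L_p$, after which Theorem~\ref{thm:q=1} applies directly.
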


\section{Extending the construction}\label{sec:extendingconstruction}

Having constructed geometric triangulations for the braid closure of $C^2\sigma_1^p\sigma_2^{-1}$, it is natural to ask whether this construction 
can be extended to cover more cases.  For example, hyperbolic T-links on three strands must be equivalent to closures of braids of the form $C^k\sigma_1^p\sigma_2^{-1}$ with $p>0$ and $k\ge2$, so adding more full twists to the construction would provide triangulations of all possible hyperbolic T-links on three strands.  However, a useful combinatorial triangulation is not obvious.

Another possible extension would be to add more $\sigma_2^{-1}$ half-twists.  The construction of $\hat{\tau}_p$ in Section~\ref{sec:anyp} can be extended, without difficulty, to $C^2\sigma_1^p\sigma_2^{-q}$ for all $q>0$, and, in each case, there is also a sequence of Pachner moves that results in a veering triangulation.  Thus, the first step of the Casson-Rivin program can be carried out.  However, the final push for geometricity (the analog of Proposition~\ref{prop:volume=0}) would require additional arguments.  After completing $C^2\sigma_1^p\sigma_2^{-q}$, it may be possible to further extend the construction to  $C^2\sigma_1^{p_1}\sigma_2^{-q_1}\cdots\sigma_1^{p_s}\sigma_2^{-q_s}$, moving closer to the general form in the Futer-Kalfagianni-Purcell characterization of hyperbolic 3-braids~\cite{FareyManifolds} stated in Section~\ref{sec:intro}.

\appendix

\section{Visualizing triangulations of 2-bridge link complements}\label{app:seeing2-bridge}

Sakuma and Weeks constructed topological triangulations of 2-bridge link complements \cite{SakumaWeeks}, which  Futer showed were geometric \cite[Appendix]{GwithF2Bridge}.  Futer's approach follows that of Gu\'eritaud, whose development of geometric triangulations of once-punctured torus bundles and 4-punctured sphere bundles forms the bulk of~\cite{GwithF2Bridge}. This appendix presents another way to visualize the triangulations of 2-bridge link complements.    

Futer's description of these triangulations is based on the fact that, with the exception of the unknot and the trivial link with two components, 2-bridge links can be constructed from certain 4-braids whose ends are connected.  More specifically, these 4-braids run between nested pillowcases; are formed by a sequence of $R$ and $L$ moves on the strands; and are closed off by adding crossing strands inside the inner pillowcase and outside the outer one.  Please see~\cite[Appendix]{GwithF2Bridge} for the details and a very clear exposition.

As Futer mentions, his 2-bridge link diagrams can be isotoped so the pillowcases are horizontal, that is, they are perpendicular to the plane of the page and contain the point at infinity.  The result is a diagram in which the braid strands run vertically between the bounding pillowcases, constrained to the plane of the page, except near the crossings. See Figure~\ref{fig:RandL} for a comparison of the defining $R$ and $L$ moves that occur between two pillowcases.
\begin{figure}[ht]\vskip.25in
\begin{overpic}[unit=.25in]{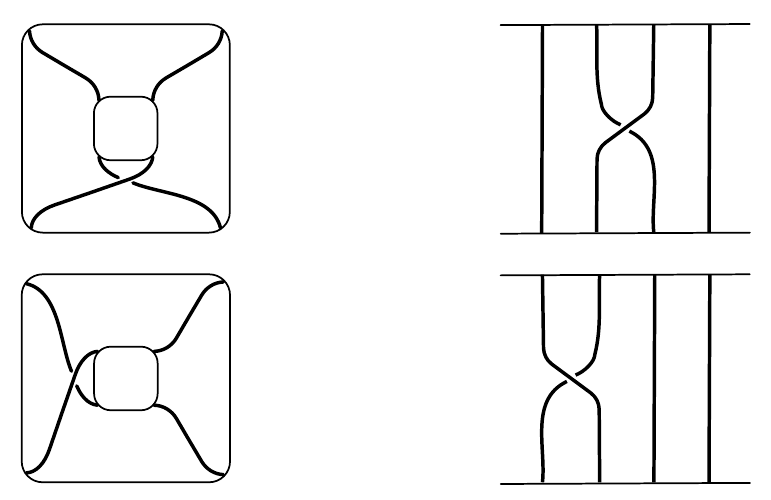}
\put(3,1.5){\small$L$} \put(11.5,1.5){\small$L$}
\put(3,5.5){\small$R$} \put(11.5,5.5){\small$R$}
\end{overpic}
\caption{The actions of $R$ and of $L$ on strands between pillowcases.  Left side is Figure~14 in~\cite[Appendix]{GwithF2Bridge}.  Right side is version with vertical strands.\label{fig:RandL}}
\end{figure}

Just as in Futer's Appendix, the 4-braid lives in a product region, $S^2\times I$, and its complement in the region is also a product, $S\times I$, where $S$ is a 4-punctured sphere.  As an example (left side of Figure~18 in~\cite[Appendix]{GwithF2Bridge}), Futer uses the 2-bridge link $K(\Omega)$ where $\Omega=R^3L^2R$, so we will too. The left side of Figure~\ref{fig:2-bridgelink} shows how to visualize $K(\Omega)$ --- the thickened curves --- in a vertical product region.  (The thinner curves will be explained later as will the right side.) 

Following Futer's techniques, we triangulate the product region using a sequence of layers of ideal tetrahedra where the top of one layer is identified to the bottom of the next.  Adopting his notation, the layers in Figure~\ref{fig:2-bridgelink} are labelled $\Delta_i$ and they each contain ideal tetrahedra $T_i$ and $T_i^\prime$.  The layer $\Delta_i$ is bounded by 4-punctured spheres, below by $S_{i}$ and above by $S_{i+1}$, with $S_{i+1}$ in $\Delta_i$  identified to $S_{i+1}$ in $\Delta_{i+1}$ by passing through a half-twist.  Numbering the punctures 0--3 will help in describing the resulting face pairings.
\begin{figure}[ht]\vskip.25in
\begin{overpic}[unit=.25in]{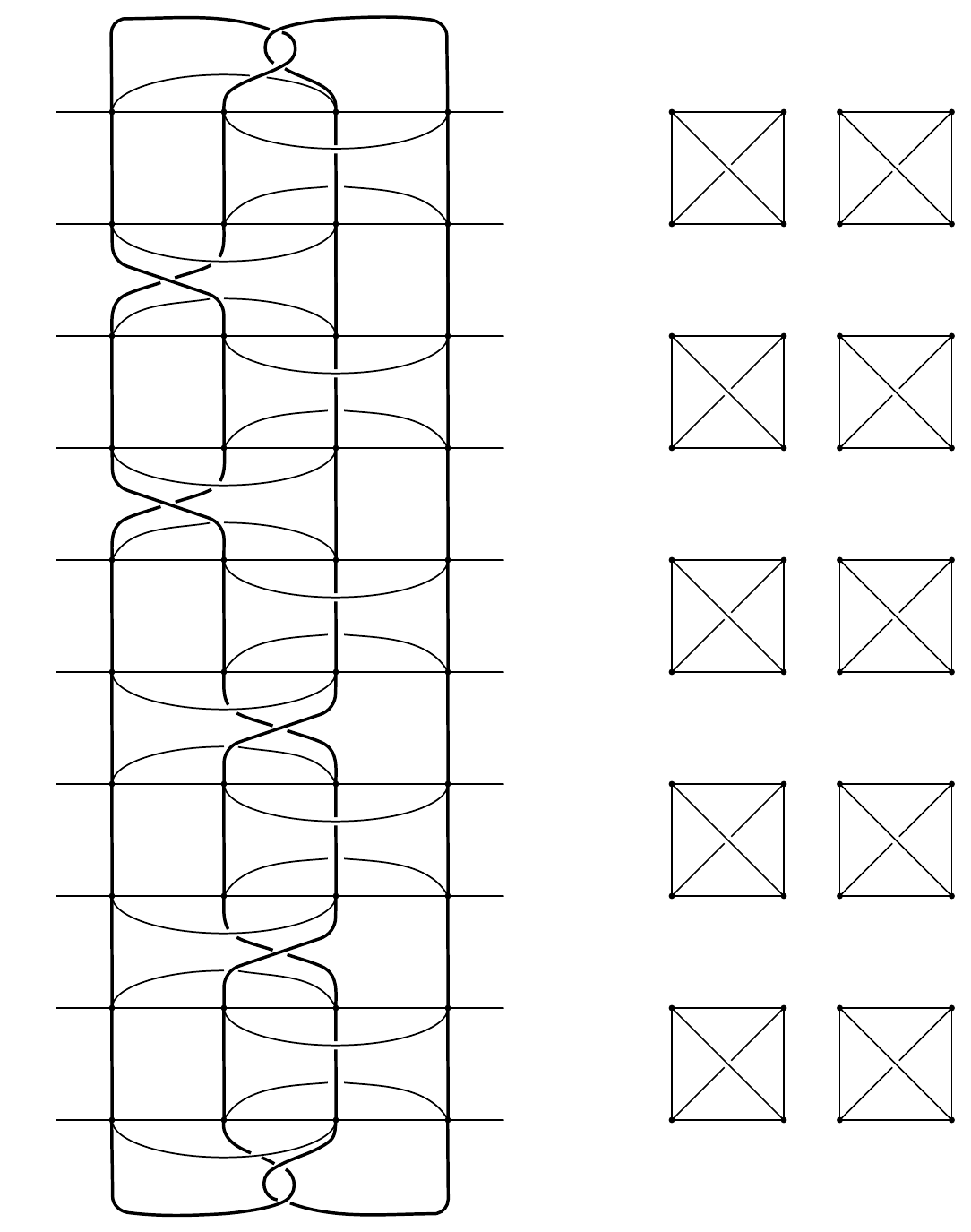}
\put(.25,19.8){\small$S_6$}\put(.25,17.8){\small$S_5$}\put(.25,15.8){\small$S_5$}\put(.25,13.8){\small$S_4$}\put(.25,11.8){\small$S_4$}\put(.25,9.8){\small$S_3$}\put(.25,7.8){\small$S_3$}\put(.25,5.8){\small$S_2$}\put(.25,3.8){\small$S_2$}\put(.25,1.8){\small$S_1$}

\put(8.25,20.2){\small$R$}\put(8.25,16.8){\small$L$}\put(8.25,12.8){\small$L$}\put(8.25,8.8){\small$R$}\put(8.25,4.8){\small$R$}\put(8.25,1.2){\small$R$}

\put(1.7, 20.1){\tiny0}\put(3.7, 20.1){\tiny1}\put(5.7, 20.1){\tiny2}\put(7.7, 20.1){\tiny3}
\put(1.7, 18.1){\tiny0}\put(3.7, 18.1){\tiny1}\put(5.7, 18.1){\tiny2}\put(7.7, 18.1){\tiny3}
\put(1.7, 16.1){\tiny0}\put(3.7, 16.1){\tiny1}\put(5.7, 16.1){\tiny2}\put(7.7, 16.1){\tiny3}
\put(1.7, 14.1){\tiny0}\put(3.7, 14.1){\tiny1}\put(5.7, 14.1){\tiny2}\put(7.7, 14.1){\tiny3}
\put(1.7, 12.1){\tiny0}\put(3.7, 12.1){\tiny1}\put(5.7, 12.1){\tiny2}\put(7.7, 12.1){\tiny3}
\put(1.7, 10.1){\tiny0}\put(3.7, 10.1){\tiny1}\put(5.7, 10.1){\tiny2}\put(7.7, 10.1){\tiny3}
\put(1.7, 8.1){\tiny0}\put(3.7, 8.1){\tiny1}\put(5.7, 8.1){\tiny2}\put(7.7, 8.1){\tiny3}
\put(1.7, 6.1){\tiny0}\put(3.7, 6.1){\tiny1}\put(5.7, 6.1){\tiny2}\put(7.7, 6.1){\tiny3}
\put(1.7, 4.1){\tiny0}\put(3.7, 4.1){\tiny1}\put(5.7, 4.1){\tiny2}\put(7.7, 4.1){\tiny3}
\put(1.7, 2.1){\tiny0}\put(3.7, 2.1){\tiny1}\put(5.7, 2.1){\tiny2}\put(7.7, 2.1){\tiny3}

\put(11.7, 20.1){\tiny1}\put(14.1, 20.1){\tiny2}\put(14.7, 20.1){\tiny0}\put(17.1, 20.1){\tiny3}
\put(11.7, 17.7){\tiny0}\put(14.1, 17.7){\tiny3}\put(14.7, 17.7){\tiny1}\put(17.1, 17.7){\tiny2}

\put(11.7, 16.1){\tiny1}\put(14.1, 16.1){\tiny2}\put(14.7, 16.1){\tiny0}\put(17.1, 16.1){\tiny3}
\put(11.7, 13.7){\tiny0}\put(14.1, 13.7){\tiny3}\put(14.7, 13.7){\tiny1}\put(17.1, 13.7){\tiny2}

\put(11.7, 12.1){\tiny1}\put(14.1, 12.1){\tiny2}\put(14.7, 12.1){\tiny0}\put(17.1, 12.1){\tiny3}
\put(11.7, 9.7){\tiny0}\put(14.1, 9.7){\tiny3}\put(14.7, 9.7){\tiny1}\put(17.1, 9.7){\tiny2}

\put(11.7, 8.1){\tiny1}\put(14.1, 8.1){\tiny2}\put(14.7, 8.1){\tiny0}\put(17.1, 8.1){\tiny3}
\put(11.7, 5.7){\tiny0}\put(14.1, 5.7){\tiny3}\put(14.7, 5.7){\tiny1}\put(17.1, 5.7){\tiny2}

\put(11.7, 4.1){\tiny1}\put(14.1, 4.1){\tiny2}\put(14.7, 4.1){\tiny0}\put(17.1, 4.1){\tiny3}
\put(11.7, 1.7){\tiny0}\put(14.1, 1.7){\tiny3}\put(14.7, 1.7){\tiny1}\put(17.1, 1.7){\tiny2}

\put(9.2,18.9){\small$\Delta_5\quad=\quad$}\put(9.2,14.9){\small$\Delta_4\quad=\quad$}\put(9.2,10.9){\small$\Delta_3\quad=\quad$}\put(9.2,6.9){\small$\Delta_2\quad=\quad$}
\put(9.2,2.9){\small$\Delta_1\quad=\quad$}

\put(12.8,17.4){\small$T_5$}\put(15.8,17.4){\small$T_5^\prime$}
\put(12.8,13.4){\small$T_4$}\put(15.8,13.4){\small$T_4^\prime$}
\put(12.8,9.4){\small$T_3$}\put(15.8,9.4){\small$T_3^\prime$}
\put(12.8,5.4){\small$T_2$}\put(15.8,5.4){\small$T_2^\prime$}
\put(12.8,1.4){\small$T_1$}\put(15.8,1.4){\small$T_1^\prime$}

\end{overpic}
\caption{The link $K(\Omega)$ where $\Omega=R^3L^2R$.  Compare left side to left side of Figure~18 in~\cite[Appendix]{GwithF2Bridge}.  Also similar to the top of Figure II.3.3 in~\cite{SakumaWeeks}.\label{fig:2-bridgelink}}
\end{figure}

The right side of Figure~\ref{fig:2-bridgelink} shows the tetrahedra forming the complement of~$K(\Omega)$.  These tetrahedra also appear in the left side of the figure.  The thin curves are their edges.  To see how this works,  examine the bottom layer, redrawn in Figure~\ref{fig:firstlayer}.  The first diagram of Figure~\ref{fig:firstlayer} (in the top left) shows an expanded version of the tetrahedra that compose~$\Delta_1$, one in which there are two copies of each edge in the plane of the page.  This expansion makes it easier to see the triangles in the bounding punctured spheres, $S_1$ and $S_2$, and how these triangles --- faces of the tetrahedra $T_1$ and $T_1^\prime$ --- live in the link complement.  The coloring indicates how the triangles in the lower $S_2$ pillowcase are identified to those in the upper one after passing through the $R$ half-twist (our version of Figure~15~\cite[Appendix]{GwithF2Bridge}):
\begin{itemize}
\item $\triangle 013$ in the front (blue) is half twisted to $\triangle 023$ in the front;
\item $\triangle 123$ in the front (green) is half twisted to $\triangle 213$ in the back;
\item $\triangle 023$ in the back (orange) is half twisted to $\triangle 013$ in the back; and
\item $\triangle 012$ in the back (pink) is half twisted to $\triangle 021$ in the front.
\end{itemize}
The face pairings defined by passing through the $L$ half-twist can be determined in the same manner, using, for example, the pillowcase~$S_4$ in Figure~\ref{fig:2-bridgelink}.

The second diagram in Figure~\ref{fig:firstlayer} shows a collapsed version of~$\Delta_1$, one consisting of two tetrahedra identified along the edges 01, 12, 23, and 30, which contains the point at infinity.  $T_1$ is in front of the page with faces 013 and 123 on top.  $T_1^\prime$ is behind the page with faces 012 and 023 on top.  The next two  diagrams also show $T_1$ and $T_1^\prime$, using arrows to indicate edge identifications.  The last one is our version of the two tetrahedra in Figure~16~\cite[Appendix]{GwithF2Bridge}, which can be separated to form the pair of tetrahedra on the bottom right in Figure~\ref{fig:2-bridgelink}.  

In each layer, the expanded version of the pair of tetrahedra on the left of Figure~\ref{fig:2-bridgelink} can be similarly associated with the flattened versions  on the right.  Thus, the tetrahedra that triangulate the product region appear in our visualization and the half-twists show how to obtain the face pairings between them.
  
\begin{figure}[ht]\vskip.25in
\begin{overpic}[unit=.25in]{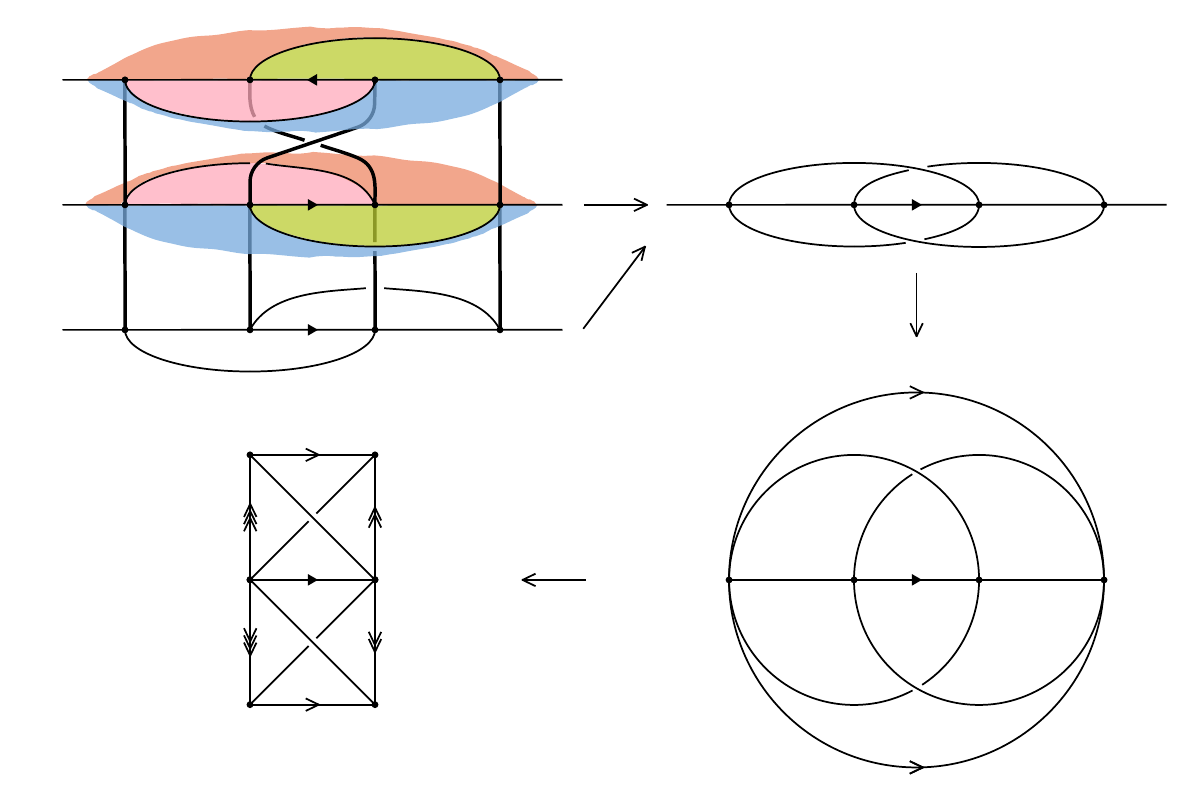}
\put(.25,11.2){\small$S_2$}\put(.25,9.2){\small$S_2$}\put(.25,7.2){\small$S_1$}

\put(8.25,8.2){\small$\Delta_1$}

\put(8.25,10.2){\small$R$}

\put(1.7, 11.4){\tiny0}\put(3.7, 11.4){\tiny1}\put(5.7, 11.4){\tiny2}\put(7.7, 11.4){\tiny3}
\put(1.7, 9.4){\tiny0}\put(3.7, 9.4){\tiny1}\put(5.7, 9.4){\tiny2}\put(7.7, 9.4){\tiny3}
\put(1.7, 7.4){\tiny0}\put(3.7, 7.4){\tiny1}\put(5.7, 7.4){\tiny2}\put(7.7, 7.4){\tiny3}

\put(11.4, 9.4){\tiny0}\put(13.4, 9.4){\tiny1}\put(15.4, 9.4){\tiny2}\put(17.4, 9.4){\tiny3}
\put(11.4, 3.4){\tiny0}\put(13.4, 3.4){\tiny1}\put(15.4, 3.4){\tiny2}\put(17.4, 3.4){\tiny3}

\put(3.7, 5.4){\tiny0}\put(6.1, 5.4){\tiny3}
\put(3.7, 3.25){\tiny1}\put(6.1, 3.25){\tiny2}
\put(3.7, 1.1){\tiny0}\put(6.1, 1.1){\tiny3}

\put(11.2,10.1){\small$T^\prime_1$}\put(11.2,8.3){\small$T_1$}
\put(11.2,4.8){\small$T^\prime_1$}\put(11.2,1.8){\small$T_1$}
\put(3.1,4){\small$T^\prime_1$}\put(3.1,2){\small$T_1$}

\end{overpic}
\caption{The layer $\Delta_1$.  Compare first figure to Figure~15 in~\cite[Appendix]{GwithF2Bridge} and last figure to Figure~16 in~\cite[Appendix]{GwithF2Bridge}.\label{fig:firstlayer}}
\end{figure}

All that remains to specify a triangulation of the link complement is to show how to close off the top and bottom.  This amounts to saying how the triangles in the bounding pillowcases are identified to themselves to form clasps:  If $\Omega$ starts with~$R$, identify bottom layer triangles $\triangle 012\sim\triangle 013$ and $\triangle 023\sim\triangle 123$; If $\Omega$ starts with~$L$, identify bottom layer triangles $\triangle 012\sim\triangle 312$ and $\triangle 023\sim\triangle 013$.  The top layer triangles are identified in the same way.  For example, if  $\Omega$ ends with~$R$, $\triangle 012\sim\triangle 013$ and $\triangle 023\sim\triangle 123$.

It should not be surprising that these identifications yield diagrams  looking very much like Figure~17 in~\cite[Appendix]{GwithF2Bridge} and Figure~II.2.7 in~\cite{SakumaWeeks}.  For example, the steps forming the bottom clasp of $K(\Omega)$ where $\Omega=R^3L^2R$ are shown in Figure~\ref{fig:clasp}.  While making the specified identifications, we can follow the punctures (their paths are drawn as strands) and see that they sweep out a clasp.  Folding $S_1$ down along the 12 edge forms a pillow.  Bringing the 2 and 3 strands together identifies the gray triangles, $\triangle 012$ and $\triangle 013$.  Placing this triangle in the plane of the page means the blue 23 edge forms a belt around a new pillow with half in the front, like the 2~strand, and half behind as the 3~strand is.  The top of the pillow is a cone formed by $\triangle 123$ and the bottom by $\triangle 023$, the white triangles with the cyan markings.  Pushing  the 1 and 0 strands into the pillow flattens the cones and identifies $\triangle 123$ to $\triangle 023$, thus completing the identifications.

\begin{figure}[ht]\vskip.25in
\begin{overpic}[unit=.25in]{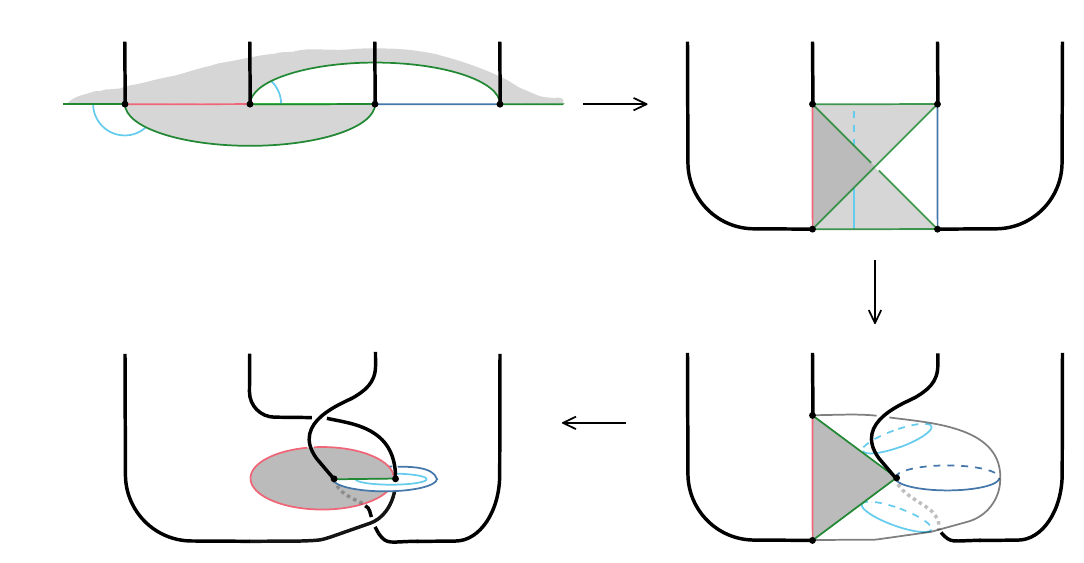}
\put(.25,7.2){\small$S_1$}

\put(1.7, 7.4){\tiny0}\put(3.7, 7.4){\tiny1}\put(5.7, 7.4){\tiny2}\put(7.7, 7.4){\tiny3}

\put(10.9, 8.4){\tiny0}\put(12.9, 8.4){\tiny1}\put(14.9, 8.4){\tiny2}\put(16.9, 8.4){\tiny3}
\put(12.7, 7.4){\tiny1}\put(15.1, 7.4){\tiny2}
\put(12.7, 5){\tiny0}\put(15.1, 5){\tiny3}

\put(10.9, 3.4){\tiny0}\put(12.9, 3.4){\tiny1}\put(14.9, 3.4){\tiny2}\put(16.9, 3.4){\tiny3}
\put(12.7, 2.4){\tiny1}
\put(12.7, 0){\tiny0}

\put(1.9, 3.4){\tiny0}\put(3.9, 3.4){\tiny1}\put(5.9, 3.4){\tiny2}\put(7.9, 3.4){\tiny3}

\end{overpic}
\caption{Forming a clasp.  Compare to Figure~17~\cite[Appendix]{GwithF2Bridge} and Figure~II.2.7 in~\cite{SakumaWeeks}.\label{fig:clasp}}
\end{figure}

Because the strands form a clasp, identifying $S_1$ to itself in such a way is equivalent to attaching a 3-ball with the needed clasp removed.  Doing the same at the top results in a triangulation of the 2-bridge link complement.  This triangulation is the same as Sakuma and Weeks' as described by Futer and is one in which the faces of the tetrahedra are easy to see in the braid diagram, as are the face pairings induced by the half-twists.

\bibliographystyle{amsplain}
\bibliography{BENBiblio} 

\end{document}